\theoremstyle{plain}
\newtheorem{theorem}{Theorem}[subsubsection]
\newtheorem{theoremIntro}{Theorem}
\newtheorem{lemma}[theorem]{Lemma}
\newtheorem{proposition}[theorem]{Proposition}
\newtheorem{corollary}[theorem]{Corollary}
\newtheorem*{conjecture*}{Conjecture}
\newtheorem*{theorem*}{Theorem}
\newtheorem*{question*}{Question}
\theoremstyle{definition}
\newtheorem{definition}[theorem]{Definition}
\newtheorem*{definition*}{Definition}
\theoremstyle{remark}
\newtheorem{remark}[theorem]{Remark}
\numberwithin{equation}{section}
\newcommand{\addQEDstyle}[2]{\AtBeginEnvironment{#1}{\pushQED{\qed}\renewcommand{\qedsymbol}{#2}}\AtEndEnvironment{#1}{\popQED}}
\DeclareMathOperator{\Prim}{Prim}
\DeclareMathOperator{\id}{id}
\DeclareMathOperator{\proj}{proj}
\DeclareMathOperator{\vspan}{span}
\DeclareMathOperator{\del}{\partial}
\DeclareMathOperator{\1T}{\text{1T}}
\DeclareMathOperator{\4T}{\text{4T}}
\DeclareMathOperator{\STU}{\text{STU}}
\DeclareMathOperator{\IHX}{\text{IHX}}
\DeclareMathOperator{\FI}{\scriptscriptstyle{\text{FI}}}
\DeclareMathOperator{\AS}{\text{AS}}
\DeclareMathOperator{\PTangles}{\textbf{PTangles}}
\DeclareMathOperator{\Diag}{\textbf{Diag}}
\DeclareMathOperator{\Sy}{\mathfrak{S}}
\DeclareMathOperator{\Bary}{Bar}
\DeclareMathOperator{\avg}{avg}
\newcommand{\epic}{\twoheadrightarrow}
\newcommand{\monic}{\xhookrightarrow{}}
\newcommand{\N}{\mathbb{N}}
\newcommand{\R}{\mathbb{R}}
\newcommand{\Z}{\mathbb{Z}}
\newcommand{\Q}{\mathbb{Q}}
\newcommand{\D}{\mathbb{D}}
\newcommand{\Ds}{\mathcal{D}}
\newcommand{\As}{\mathcal{A}}
\newcommand{\Fs}{\mathcal{F}}
\newcommand{\Ls}{\mathcal{L}}
\newcommand{\Ss}{\mathcal{S}}
\newcommand{\paren}[1]{\left( #1 \right)}
\newcommand{\abs}[1]{\lvert #1 \rvert}
\author{Bruno Dular}
\address{Bruno Dular:
University of Luxembourg, FSTM, Department of Mathematics, 
Maison du nombre, 6 avenue de la Fonte,
L-4364 Esch-sur-Alzette, Luxembourg}
\email{bruno.dular@uni.lu}
\date{\today}
\title[Primitive Lie algebra of Feynman diagrams]{Primitive Feynman diagrams and the rational Goussarov--Habiro Lie algebra of string links}
\begin{document}
\maketitle

\begin{abstract}
    Goussarov--Habiro's theory of clasper surgeries defines a filtration of the monoid of string links $L(m)$ on $m$ strands, in a way that geometrically realizes the Feynman diagrams appearing in low-dimensional and quantum topology. Concretely, $L(m)$ is filtered by $C_n$-equivalence, for $n\geq 1$, which is defined via local moves that can be seen as higher crossing changes. The graded object associated to the Goussarov--Habiro filtration is the \emph{Goussarov--Habiro Lie algebra of string links} $\Ls L(m)$. We give a concrete presentation, in terms of primitive Feynman (tree) diagrams and relations ($\1T$, $\AS$, $\IHX$, $\STU^2$), of the rational Goussarov--Habiro Lie algebra $\Ls L(m)_{\Q}$. To that end, we investigate cycles in \emph{graphs of forests}: \emph{flip graphs} associated to forest diagrams and their $\STU$ relations. As an application, we give an alternative \emph{diagrammatic} proof of Massuyeau's rational version of the Goussarov--Habiro conjecture for string links, which relates indistinguishability under finite type invariants of degree $<n$ and $C_n$-equivalence.
\end{abstract}

\tableofcontents

\section{Introduction}
\subsection{Background}
A \emph{string link on $m$ strands in the cylinder} is a smooth embedding of a disjoint union of $m$ intervals into the cylinder
$$\gamma\colon\{1,\dots,m\}\times [0,1]\monic\D^2\times [0,1]$$
such that $\gamma(i,e)=(x_i,e)$ for all $i\in\{1,\dots,m\}$ and $e\in\{0,1\}$, where $x_1,\dots,x_m\in\D^2$ are fixed points\footnote{The points $x_i$ are usually ordered along the $x$-axis.}, and $\gamma$ is perpendicular to the boundary near those points\footnote{This last condition ensures that concatenation of string links is well-defined, i.e.\ the concatenation remains a smooth embedding.}. String links on $m$ strands are considered up to smooth isotopy\footnote{One can replace isotopies by \emph{ambient isotopies} by the \emph{smooth isotopy extension theorem}.}, and $L(m)$ denotes the corresponding set of isotopy classes. When there is no ambiguity, elements of $L(m)$ are also called \emph{string links}.

The set $L(m)$ is a \emph{monoid} with multiplication given by vertical concatenation (Figure \ref{fig:concatenation}), with the trivial string link as unit (Figure \ref{fig:trivial_string_link}).

\begin{figure}[!ht]
     \centering
     \begin{subfigure}[b]{0.3\textwidth}
         \centering
         \includegraphics[scale=1]{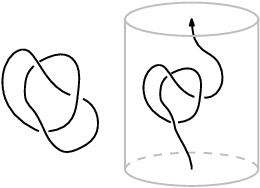}
         \caption{A knot and a long knot}
         \label{fig:knot_and_long_knot}
     \end{subfigure}
     \hfill
     \begin{subfigure}[b]{0.5\textwidth}
         \centering
         \includegraphics[scale=1]{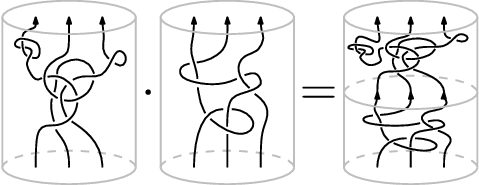}
         \caption{Two string links and their concatenation in $L(3)$}
         \label{fig:concatenation}
     \end{subfigure}
     \hfill
     \begin{subfigure}[b]{0.15\textwidth}
         \centering
         \includegraphics[scale=0.8]{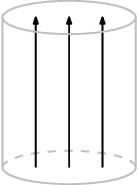}
         \caption{Trivial string link in $L(3)$}
         \label{fig:trivial_string_link}
     \end{subfigure}
        \caption{Long knots and string links}
        \label{fig:knot_long_knot_and_string_links}
\end{figure}

When $m=1$, elements of $L(1)$ are called \emph{long knots} and \emph{closing a long knot} yields an isomorphism between $L(1)$ and the monoid of knots under connected sum (Figure \ref{fig:knot_and_long_knot}). Thus, the study and classification of string links naturally belong to knot theory, and one of the main tools for probing them is the use of \emph{invariants}.

A string link invariant $V\colon L(m)\rightarrow A$ valued in an abelian group $A$ is a \emph{Vassiliev invariant} or \emph{finite type invariant} of \emph{degree $n$} if it satisfies specific \emph{skein relations} \cite{vassiliev1990cohomology,birman1993knot,bar1995vassiliev}, which can be read as the condition that $V$ vanishes on the $(n+1)$-st power of the augmentation ideal in the monoid ring $\Z L(m)$. Vassiliev invariants dominate\footnote{Here, \emph{dominate} means that the collection of all Vassiliev invariants determines the mentioned known invariants.} polynomial invariants (Conway, HOMFLY, etc.), quantum invariants \cite{Turaev+2016}, Milnor invariants \cite{meilhan2017surveyMilnor}, etc.\ but it is an open question whether they separate string links (or even knots, in the case $m=1$). One of the main results in that theory, due to Kontsevich, is that any $\Q$-valued Vassiliev invariant $V$ factors through the Kontsevich integral
\begin{equation}\label{eq:Kontsevich_integral}
    Z\colon L(m)\rightarrow\widehat{\As^{\FI}(m)_{\Q}}
\end{equation}
which is valued in the graded completion of the Hopf algebra $\As^{\FI}(m)_{\Q}$ of \emph{Feynman diagrams} on $m$ strands \cite{kontsevich1993vassiliev,bar1995vassiliev}, generated by uni-trivalent diagrams (with cyclic orientation at the trivalent vertices\footnote{The cyclic orientation is usually not indicated, in which case it is assumed to be counterclockwise.}, and univalent vertices attached to $m$ vertical strands, see Figure \ref{fig:examples_of_diagram}), modulo $\1T$ and $\STU$ relations (which imply the relations $\4T,\AS,\IHX$, see Figures \ref{fig:4T,STU,1T-relations}, \ref{fig:AS_and_IHX_relations}). The presence of the $\1T$ relation translates the fact that we are working with \emph{unframed string links}. This justifies the notation $\FI$ for \emph{framing independence}, used throughout this paper.

For the reason above, the Kontsevich integral is called a \emph{universal Vassiliev invariant over $\Q$}. The theory is much less understood over the integers.

\begin{figure}[!ht]
     \centering
     \begin{subfigure}[b]{0.4\textwidth}
         \centering
         \includegraphics[width=\textwidth]{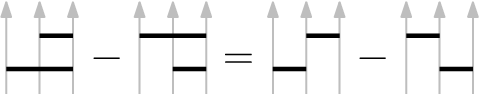}
         \caption{A $\4T$, \emph{four-terms}, relation}
         \label{fig:4T_relation}
     \end{subfigure}
     \hfill
     \begin{subfigure}[b]{0.25\textwidth}
         \centering
         \includegraphics[width=\textwidth]{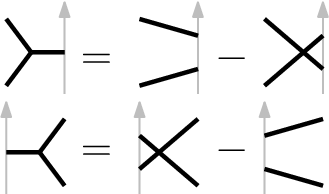}
         \caption{$\STU$ relations}
         \label{fig:STU_relation}
     \end{subfigure}
     \hfill
     \begin{subfigure}[b]{0.2\textwidth}
         \centering
         \includegraphics[width=0.7\textwidth]{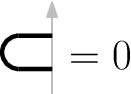}
         \caption{A $\1T$, \emph{one-term}, relation}
         \label{fig:1T_relation}
     \end{subfigure}
        \caption{The $\4T$, $\STU$ and $\1T$ relations}
        \label{fig:4T,STU,1T-relations}
\end{figure}

\begin{figure}[!ht]
     \centering
     \begin{subfigure}[b]{0.34\textwidth}
         \centering
         \includegraphics[width=0.78\textwidth]{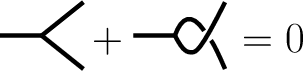}
         \caption{An $\AS$ relation}
         \label{fig:y equals x}
     \end{subfigure}
     \hfill
     \begin{subfigure}[b]{0.64\textwidth}
         \centering
         \includegraphics[width=0.60\textwidth]{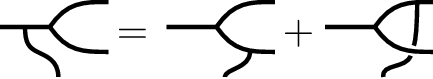}
         \caption{An $\IHX$ relation in the \emph{Kirchhoff form} \cite[Section 5.2.7]{chmutov2012introduction}}
         \label{fig:IHX_relation_Kirchhoff}
     \end{subfigure}
        \caption{The $\AS$ and $\IHX$ relations}
        \label{fig:AS_and_IHX_relations}
\end{figure}

A natural question about Vassiliev invariants is: what do they detect? In the case of knots ($m=1$), Goussarov \cite{goussarov1998interdependent} and Habiro \cite{habiro2000claspers} independently answered this question by introducing \emph{$C_n$-moves}, defined using \emph{clasper surgeries}. Those are local moves on string links that are modelled on \emph{tree claspers}, ribbon trees with leaves attached to the strands, that geometrically realize the Feynman diagrams from above. See Figure \ref{fig:C1_C2_moves} for examples of $C_1$- and $C_2$-moves. More precisely, $C_n$-moves generate the $C_n$-equivalence relation on $L(m)$, and they show that any two $C_n$-equivalent string links are $V_n$-equivalent, i.e.\ they cannot be distinguished by Vassiliev invariants of degree $<n$. They conjecture that the converse holds as well: this is the \emph{Goussarov--Habiro conjecture for string links in the cylinder} (which they prove for knots, i.e.\ the case $m=1$). This is still wide open, with only progress made in small degrees \cite{meilhanYasuhara2010stringlinks5}.

\begin{figure}[!ht]
    \centering
    \begin{subfigure}[b]{0.48\textwidth}
        \centering
        \includegraphics[width=0.8\textwidth]{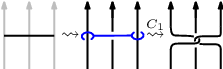}
        \caption{A $C_1$-move}
        \label{fig:C1_move}
    \end{subfigure}
    \hfill
    \begin{subfigure}[b]{0.5\textwidth}
        \centering
        \includegraphics[width=0.8\textwidth]{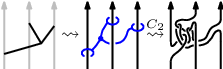}
        \caption{A $C_2$-move}
        \label{fig:C2_move}
    \end{subfigure}
       \caption{Diagrams, claspers realizing them and $C_n$-moves}
       \label{fig:C1_C2_moves}
\end{figure}

For each $n\geq 1$ the quotient monoid $L(m)/C_{n+1}$ turns out to be a nilpotent group, and $C_{n+1}$-equivalence classes of $C_n$-trivial string links form a finitely generated abelian group $\Ls_n L(m)\coloneqq L_n(m)/C_{n+1}$ \cite{goussarov2000finite,habiro2000claspers}. Those combine into a graded Lie $\Z$-algebra $\Ls L(m)\coloneqq\bigoplus_{n\geq 1}\Ls_n L(m)$, the \emph{Goussarov--Habiro Lie algebra of string links on $m$ strands}. From their work, it follows that there is a surjective $\Z$-linear graded \emph{realization map} $R\colon\Z\Ds^T(m)\epic\Ls L(m)$ that sends a tree diagram to the result of surgery along a tree clasper realizing it. Thus there is a presentation of the Goussarov--Habiro Lie algebra by tree diagrams on $m$ strands modulo specific relations $$\Ls L(m)\cong \Z\Ds^T(m)/\ker(R),$$
whose understanding would be a significant step towards the Goussarov--Habiro conjecture\footnote{The Goussarov--Habiro conjecture is equivalent to injectivity of the \emph{comparison map} $\chi\colon\Ls L(m)\rightarrow\As L(m)\colon [\gamma]\mapsto [\gamma-1]$, where $\As L(m)$ is the graded algebra associated with the Vassiliev filtration of the monoid ring $\Z L(m)$.}, while also being of its own interest. For example, it would provide a ``\emph{universal Goussarov--Habiro invariant}'', valued in a Lie algebra of tree diagrams. This is related to the study of $\pi_0$ of the embedding calculus tower \cite{conant2008stu2,budney2017embedding,kosanovic2020thesis}. The concordance analogous of this question, where $C_n$-equivalence is replaced by \emph{$C_n$-concordance} appears in works of Conant, Schneiderman and Teichner \cite{jimrobpeter2012whitney,jimrobpeter2016geometricfiltrations}, while the homotopy analogous has been settled by Habegger, Lin and Masbaum \cite{habeggerlin1990classification,habegger2000kontsevich}. Habiro and Massuyeau studied a similar question in the context of homology cylinders in \cite{habiro2009symplectic}. Nozaki, Sato and Suzuki also studied $\ker(R)$ in \cite{nozaki2022kernel}.

\newpage
\subsection{Results and strategy}
\subsubsection{Main result}
The main contribution of the present paper is an explicit presentation of the \emph{rational} Goussarov--Habiro Lie algebra $\Ls L(m)_{\Q}\coloneqq \Ls L(m)\otimes\Q$. In other words, we identify $\ker(R)_{\Q}$ as the subgroup generated by specific relations in $\Q\Ds^T(m)$: the usual $\1T,\AS,\IHX$ relations, completed with Conant's $\STU^2$ relation, see (\ref{eq:stu2_intro}). We also prove that this algebra of tree diagrams is the primitive Lie algebra of the Hopf algebra of Feynman diagrams $\As^{\FI}(m)_{\Q}$.

\begin{theoremIntro}[Presentation of the rational Goussarov--Habiro Lie algebra of string links]\label{thmI:main_result}
    There is an isomorphism of Lie $\Q$-algebras
    \begin{equation*}
        \Prim(\As^{\FI}(m)_{\Q})=\faktor{\Q\Ds^T(m)}{\langle\1T,\AS,\IHX,\STU^2\rangle}\xrightarrow{\cong}\Ls L(m)_{\Q}
    \end{equation*}
    defined by realizing tree diagrams using clasper surgeries, whose inverse is induced by the Kontsevich integral.
\end{theoremIntro}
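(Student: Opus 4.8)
The plan is to build three maps and chain them. Write $\mathfrak{t}(m)\coloneqq\Q\Ds^T(m)/\langle\1T,\AS,\IHX,\STU^2\rangle$ for the left-hand quotient. I would introduce: a \emph{realization map} $R\colon\mathfrak{t}(m)\to\Ls L(m)_{\Q}$, $\Q$-linear and Lie; a \emph{Kontsevich map} $\ell\colon\Ls L(m)_{\Q}\to\Prim(\As^{\FI}(m)_{\Q})$, also Lie; and the composite $c\coloneqq\ell\circ R\colon\mathfrak{t}(m)\to\Prim(\As^{\FI}(m)_{\Q})$, which I would identify with the \emph{canonical map} sending a tree diagram to itself, viewed as a connected — hence primitive — Feynman diagram. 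Granting this, the theorem reduces to two facts: surjectivity of $R$ (known: $\Ls_n L(m)$ is generated by surgeries on degree-$n$ tree claspers, by Goussarov--Habiro) and the statement that $c$ is an isomorphism. Indeed, if $c$ is injective then so is $R$, hence $R$ is an isomorphism; then $\ell=c\circ R^{-1}$ is an isomorphism with exactly the descriptions in the statement, and, via Milnor--Moore ($\As^{\FI}(m)_{\Q}=U(\Prim(\As^{\FI}(m)_{\Q}))$), the left-hand \enquote{$=$} is also accounted for. So the real work is concentrated in proving that $c$ is an isomorphism; the rest is assembly from results recalled earlier.

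Constructing $R$: the relations $\1T,\AS,\IHX$ hold for clasper surgery by Goussarov--Habiro's clasper calculus, and $\STU^2$ holds by Conant \cite{conant2008stu2}, so $R$ is well defined; the bracket on $\Q\Ds^T(m)$ (a sum over ways to glue a leaf of one tree to a leaf of the other lying on a common strand, erasing the resulting bivalent vertex) is built to match the commutator bracket on the associated graded $\Ls L(m)$, so $R$ is a Lie map; and surjectivity is the Goussarov--Habiro generation statement, factored through the quotient since the defining relations lie in the kernel of $\Q\Ds^T(m)\epic\Ls L(m)_{\Q}$. Constructing $\ell$: since $Z$ is multiplicative and group-like valued, and $C_n$-equivalence implies agreement of all Vassiliev invariants of degree $<n$ (Goussarov--Habiro) while $Z$ is universal for these over $\Q$, any $C_n$-trivial $\gamma$ has $Z(\gamma)=1+(\text{degree}\ge n)$; the degree-$n$ part $z_n(\gamma)$ then equals the degree-$n$ part of $\log Z(\gamma)$, hence is primitive, and $z_n$ is additive in $\gamma$ and kills $C_{n+1}$-trivial links, giving $\ell_n\colon\Ls_n L(m)_{\Q}\to\Prim_n(\As^{\FI}(m)_{\Q})$; assembling over $n$ yields $\ell$. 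Bracket compatibility comes from Baker--Campbell--Hausdorff applied to $\log Z$: for $x\in L_n(m)$, $y\in L_p(m)$ the commutator $xyx^{-1}y^{-1}$ is $C_{n+p}$-trivial with $z_{n+p}(xyx^{-1}y^{-1})=[z_n(x),z_p(y)]$, which is precisely $\ell$ of the bracket $[[x],[y]]$ in $\Ls L(m)$. Finally $c=\ell\circ R$ is the canonical map because $R([T])$ is surgery along a tree clasper realizing $T$, and the Kontsevich integral of such a surgered string link has the form $1+T+(\text{degree}>n)$, so $z_n(R([T]))=T$.

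The main obstacle is showing that $c$ is an isomorphism, and this is where the analysis of cycles in graphs of forests enters. Recall that the primitives of the diagram Hopf algebra are precisely the connected diagrams, so $c$ has two halves to establish. \emph{Surjectivity}: every connected uni-trivalent diagram is, modulo $\1T$ and $\STU$, a $\Q$-linear combination of tree diagrams; I would argue by induction on the loop degree (first Betti number), repeatedly applying $\STU$ at a trivalent vertex adjacent to a strand so as to decrease it, tracking the resulting forest diagrams. \emph{Injectivity}: a $\Q$-linear combination of tree diagrams vanishing in $\As^{\FI}(m)_{\Q}$ already lies in $\langle\1T,\AS,\IHX,\STU^2\rangle$. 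Here I would realize the $\STU$-reduction above by a flip complex whose vertices are forest diagrams and whose edges record $\STU$ moves; a relation between trees holding in $\As^{\FI}(m)_{\Q}$ corresponds to a cycle in this complex, and the task is to show that every such cycle is generated by the elementary squares — which are exactly the $\STU^2$ relations — together with the local relations from trivalent-vertex moves ($\IHX$) and orientation reversals ($\AS$). I expect the delicate point to be the precise identification and reduction of all \enquote{syzygies} among $\STU$-resolutions, likely via a discrete-Morse or explicit contraction argument on these graphs of forests; this cycle analysis is the technical heart of the argument.

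Once $c$ is an isomorphism, $R$ is an isomorphism and $\ell=c\circ R^{-1}$ is its inverse, both carrying the descriptions claimed in Theorem \ref{thmI:main_result}, and $\mathfrak{t}(m)\cong\Prim(\As^{\FI}(m)_{\Q})$. As an immediate corollary, injectivity of $\ell$ — that a $C_n$-trivial string link whose Kontsevich integral is trivial through degree $n$ is $C_{n+1}$-trivial — is Massuyeau's rational Goussarov--Habiro conjecture, now obtained by purely diagrammatic means.
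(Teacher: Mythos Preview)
Your outline matches the paper's strategy almost exactly: a realization map $R$ via clasper surgery, a Kontsevich-induced map $\ell$ back, the identification $\ell\circ R=c$ via the tree preservation formula $Z(\sigma(C_T))=1+T+O(n+1)$, and then the combinatorial heart --- showing $c\colon\mathfrak{t}(m)\to\Prim(\As^{\FI}(m)_{\Q})$ is an isomorphism --- via a cycle analysis in flip graphs of forests.

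Two places where the paper does substantive work that your sketch treats as given. First, the tree preservation formula is not cited but \emph{proved} (Theorem~\ref{thm:tree_preservation_theorem}) by an induction on degree using Habiro's moves $H2$, $H10$ together with the doubling and strand-reversal compatibilities of $Z$; this is a genuine computation, not folklore. Second, in the cycle analysis the paper organizes the flip graph by \emph{fixed size} (vertices are size-$s$ forests, edges are slide moves of adjacent legs from distinct trees) and identifies it with a box product of permutographs; the Coxeter presentation of $\Sy_n$ then shows that $1$-cycles are generated by squares \emph{and hexagons}, the latter coming from the braid relation $\tau_i\tau_{i+1}\tau_i=\tau_{i+1}\tau_i\tau_{i+1}$. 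The resulting $\hexagon$-relations are vacuous at size $1$ (they require at least two trees), so they disappear from the final presentation of $\Prim$, but they are needed at intermediate sizes to make the section maps $\pi^{s}$ well defined and hence to push injectivity of $c$ through the filtration. Your claim that cycles are generated by ``squares ($\STU^2$) together with $\IHX$ and $\AS$'' is therefore slightly incomplete as stated; you should expect the hexagons to appear and then argue separately that they contribute nothing new at the tree level.
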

The strategy of the proof is outlined below. It follows from Theorems \ref{thm:presentation_primitive_Lie_algebra}, \ref{prop:realizing_trees} and \ref{thm:tree_preservation_theorem} in the text.

\subsubsection{Strategy and side results}
For identifying the kernel of the realization map $R$, which takes tree diagrams to classes of string links, it is natural to look for a map in the other direction. This is exactly what the \emph{Kontsevich integral} (\ref{eq:Kontsevich_integral}) does \cite{kontsevich1993vassiliev,bar1995vassiliev}. Moreoever, it detects tree clasper surgeries, in a sense made precise by the following result\footnote{See \cite[Proposition E.24]{ohtsuki2002quantum} for a similar result in the context of the \emph{LMO invariant}.}.
\begin{theoremIntro}[Tree preservation theorem (Theorem \ref{thm:tree_preservation_theorem})]\label{thmI:Tree_preservation_theorem}
    Let $n\geq 1$ and $T\in\Ds^T_n(m)$ a degree $n$ tree diagram on $m$ strands. Then\footnote{The use of the big-$O$ notation indicates that this is an equality modulo terms of degree $\geq n+1$.}
    \begin{equation}
        Z(\sigma(C_T))=1+T+O(n+1),
    \end{equation}
    where $\sigma(C_T)$ denotes the string link obtained from the trivial one by surgery along a tree clasper $C_T$ realizing $T$, and $O(n+1)$ means that the equality holds modulo terms of degree $\geq n+1$.
\end{theoremIntro}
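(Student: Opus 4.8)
The plan is to strip everything off except the top degree by \emph{universality}, and then pin down that one remaining coefficient by an induction on $n$ that is driven by the multiplicativity of $Z$ and by Habiro's clasper calculus. Since $\sigma(C_T)$ is obtained from the trivial string link by a single $C_n$-move, it is $C_n$-equivalent to $\mathbf 1$, hence indistinguishable from $\mathbf 1$ by Vassiliev invariants of degree $<n$; as $Z$ is a universal rational Vassiliev invariant this forces $Z(\sigma(C_T)) = 1 + O(n)$, so the whole content of the statement is that the degree-$n$ part of $Z(\sigma(C_T))$ is the class of $T$ in $\As^{\FI}_n(m)_{\Q} = \Q\Ds_n(m)/\langle\1T,\STU\rangle$. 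The two structural properties of $Z$ I would use are: (i) after the standard normalization, made transparent by the $\1T$ quotient, $Z$ is a homomorphism from $L(m)$ to the group-like elements of $\widehat{\As^{\FI}(m)_{\Q}}$, so on the nilpotent quotients $L(m)/C_{N}$ it sends concatenation to product and commutators to Lie brackets modulo higher degree: if $Z(\gamma_a)=1+a+O(d_a+1)$ and $Z(\gamma_b)=1+b+O(d_b+1)$ with $a,b$ homogeneous, then $Z([\gamma_a,\gamma_b]) = 1 + [a,b] + O(d_a+d_b+1)$; and (ii) $C_{N}$-equivalent string links have the same Kontsevich integral in all degrees $<N$.

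For the base case $n=1$, a degree-$1$ tree diagram is a chord $c_{ij}$ between two strands $i\neq j$, a clasper realizing it produces the string link differing from $\mathbf 1$ only by a single clasp between strands $i$ and $j$, and the degree-$1$ part of $Z$ is exactly the linking number, so $Z(\sigma(C_{c_{ij}})) = 1 + c_{ij} + O(2)$, the self-chord case being killed by $\1T$. For the inductive step, I would use that a degree-$n$ tree diagram $T$ ($n\geq 2$) always has an internal trivalent vertex $u$ carrying two legs, say on strands $s,t$; peeling $u$ off realizes $T$, via Habiro's clasper calculus and the construction of the realization map $R$, as the image modulo $C_{n+1}$-equivalence of a commutator $[\sigma(e),\sigma(C_{T'})]$ of an elementary $C_1$-surgery $\sigma(e)$ and a surgery $\sigma(C_{T'})$ along a tree clasper realizing a degree-$(n-1)$ diagram $T'$. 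Combining the base case, property (i) applied repeatedly, and property (ii) with $N=n+1$ then gives $Z(\sigma(C_T)) = 1 + T + O(n+1)$, \emph{provided} the iterated associative commutator of chords assembled along the combinatorial shape of $T$ reduces to $T$ itself in $\As^{\FI}_n(m)_{\Q}$. (As an alternative, less diagrammatic, route one could instead invoke a surgery formula for $Z$ in the spirit of the Aarhus/LMO constructions, where the leading term of $Z$ of a clasper surgery is by design the graph of the clasper, which for a connected tree clasper of degree $n$ is $T$.)

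That proviso is the real obstacle. The commutator appearing on the diagram side is $[c,T'] = cT' - T'c$ in the algebra $\As^{\FI}(m)_{\Q}$, and expanding it by $\STU$ produces, besides the ``reconstitute the vertex $u$'' term equal to $T$, further terms of the \emph{same} degree $n$ coming from the other legs of $T'$ lying on strands $s$ or $t$ — and these can be looped diagrams, not trees. One must therefore show that for the particular decompositions produced by the clasper edge-cut and leaf-slide moves these extra contributions either cancel or are absorbed modulo the relations (this is precisely where Conant's $\STU^2$ relation is expected to enter), with all signs accounted for. Controlling exactly these cancellations — that is, understanding which alternating sums of forest diagrams obtained by repeated $\STU$-moves vanish — is what the paper's analysis of cycles in flip graphs of forests is designed to do, and it is also what makes the degenerate configurations (a branch collapsing to a single chord, or repeated legs on a common strand) go through uniformly.
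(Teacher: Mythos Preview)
Your commutator reduction has a genuine gap, and the cleanest way to see it is the case $m=1$. The monoid $L(1)$ of long knots is commutative, so every group commutator $[\sigma(e),\sigma(C_{T'})]$ in $L(1)/C_{n+1}$ is trivial; on the diagram side $\As^{\FI}(1)_{\Q}$ is a commutative algebra, so every bracket $[c,T']$ vanishes. Yet there are nontrivial degree-$n$ tree diagrams on one strand for all $n\geq 2$, and the corresponding clasper surgeries are not $C_{n+1}$-trivial (the trefoil already witnesses this at $n=2$). So the assertion that $\sigma(C_T)$ is $C_{n+1}$-equivalent to a commutator of lower-degree surgeries cannot hold in general, and no amount of $\STU^2$ or flip-graph bookkeeping will repair it: those tools show that different $\STU$-paths give the \emph{same} sum, not that the sum collapses to a single tree. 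Even for $m>1$ the bracket $[c,T']=cT'-T'c$ expands via $\STU$ to one term per leg of $T'$ on the two strands carrying $c$, and this sum is typically a genuine linear combination of several distinct trees, not $T$ alone.

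The paper's argument avoids commutators entirely. The inductive step uses a different structure on $Z$: its compatibility with \emph{strand doubling} and \emph{strand reversal}. Habiro's moves 2 and 10 convert the chosen trivalent vertex of $C_T$ into a geometric operation on the ambient string link --- a strand gets doubled and one copy reversed --- leaving a degree-$(n-1)$ tree clasper $C_{T'}$ on the modified skeleton. The induction hypothesis gives $Z$ on that piece, and the doubling and reversing operators $(\textbf{D}_C)_*$, $(\textbf{R}_C)_*$ applied to $T'$ produce, after a single $\STU$, exactly $T$; a separate Brunnian-type argument kills the remaining cross-terms. This mechanism is what makes the one-strand case go through, and it is not visible from the Lie bracket alone.
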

Consequently, the Kontsevich integral induces a map $Z^{GH}\colon\Ls L(m)_{\Q}\rightarrow\As^{\FI}(m)_{\Q}$, defined for $[\gamma]\in\Ls_n L(m)$ as $Z^{GH}([\gamma])\coloneqq Z_n(\gamma)$, whose image lies in the subspace generated by tree diagrams $\vspan_{\Q}(\text{trees})\subset\As^{\FI}(m)_{\Q}$. As in the case of knots, i.e.\ $m=1$, the subspace generated by tree diagams is precisely the primitive Lie algebra of $\As^{\FI}(m)_{\Q}$ (Theorem \ref{thm:comparison_primitive_and_size_filtration} in the text, see also (\ref{eq:primitive_filtration}) and Definition \ref{def:size_filtration}):
\begin{theoremIntro}[Primitive versus size filtration (Theorem \ref{thm:comparison_primitive_and_size_filtration})]\label{thmI:primitive_versus_size_filtration}
    The reduced primitive filtration and the size filtration of $\As^{\FI}(m)_{\Q}$ coincide. In particular, $\Prim(\As^{\FI}(m)_{\Q})$ is the submodule of $\As^{\FI}(m)_{\Q}$ generated by tree diagrams.
\end{theoremIntro}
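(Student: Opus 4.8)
The plan is to show that the primitive filtration on $\As^{\FI}(m)_{\Q}$ and the size filtration agree term by term, which by the Milnor--Moore theorem applied to the graded connected cocommutative Hopf algebra $\As^{\FI}(m)_{\Q}$ will identify $\Prim(\As^{\FI}(m)_{\Q})$ with the span of tree diagrams. Recall that the coproduct on a Feynman diagram $D$ is $\Delta(D) = \sum_{D = D' \sqcup D''} D' \otimes D''$, the sum over ways of splitting the connected components of $D$ (the trivalent part together with its attaching points) into two groups. Hence a diagram is primitive precisely when it has a single connected component, i.e.\ is connected, and more generally the $k$-th reduced primitive filtration layer is spanned by diagrams whose reduced coproduct iterated $k$ times vanishes. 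The size filtration, by Definition~\ref{def:size_filtration}, is defined so that a diagram of degree $n$ lies in filtration step according to its number of trivalent vertices (equivalently, the ``excess'' $n - (\text{number of connected components counted appropriately})$); connected diagrams of degree $n$ with a tree underlying graph have the minimal possible size, and a connected diagram that is \emph{not} a tree has strictly larger size.

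First I would set up both filtrations precisely and reduce to a statement in each fixed degree $n$: the degree-$n$ part of $\As^{\FI}(m)_{\Q}$ is a finite-dimensional $\Q$-vector space, so everything is a comparison of finite flags. The inclusion of the size filtration into the primitive filtration (in the appropriate direction) is the easy direction and comes essentially for free: splitting off connected components strictly drops the relevant complexity, so a diagram of low size is killed by enough iterations of the reduced coproduct. The content is the reverse inclusion — that a primitive element can be written as a combination of tree diagrams (for the ``in particular'' clause), and more generally that the $k$-th primitive layer is contained in the $k$-th size layer. Here is where I would invoke the structural input about $\STU$ relations: using $\STU$ one can resolve any trivalent vertex adjacent to a strand, and iterating, any connected diagram with a loop (a non-tree connected diagram) can be rewritten modulo lower-size diagrams in terms of diagrams with more components or with the loop pushed around — the standard argument (as in Bar-Natan's work on $\As(\emptyset)$, cf.\ \cite{bar1995vassiliev,chmutov2012introduction}) showing that non-simply-connected diagrams are not needed to generate primitives, adapted to the string-link setting with the extra $\1T$ relation. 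Concretely: a connected non-tree diagram has a cycle; pick an innermost edge on the cycle; apply $\IHX$/$\STU$ to slide it; bookkeeping shows the result is a sum of tree diagrams plus strictly-higher-size connected diagrams plus products (non-primitives of lower size), and one closes by induction on (degree, then size).

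The main obstacle I expect is precisely this last combinatorial induction: controlling that the $\STU$/$\IHX$ manipulations that eliminate loops genuinely decrease in the induction order and that the ``error terms'' produced land in the filtration step claimed, without circularity. A clean way to organize it is to first prove Theorem~\ref{thmI:primitive_versus_size_filtration} at the level of associated graded pieces — show the symbol map from the size-graded object to the primitive-graded object is an isomorphism — using that both associated graded objects are computed by the same combinatorial complex of diagrams. The Hopf-algebra generalities (Milnor--Moore, that a graded connected cocommutative Hopf algebra over $\Q$ is the universal enveloping algebra of its primitives, so $\Prim$ is exactly the ``indecomposables meet the coradical filtration step one'') are standard and can be cited; the novelty and the real work is matching the coradical/primitive filtration with the explicitly combinatorial size filtration, and that is carried by the loop-elimination lemma sketched above.
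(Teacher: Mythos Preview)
Your proposal rests on a misreading of the size filtration. In this paper, $\As^{\FI}(m)$ is presented by \emph{forest} diagrams (see (\ref{eq:definition_algebra_diagrams})), and the size of a forest diagram is its number of connected components, i.e.\ its number of trees (Section~\ref{sec:Uni-trivalent, forest and chord diagrams} and Definition~\ref{def:size_filtration}). It is \emph{not} the number of trivalent vertices, nor any ``excess'' quantity. In particular, in the forest presentation there are no connected diagrams with loops at all: every connected diagram is already a tree. Your proposed loop-elimination argument via $\STU$/$\IHX$ is therefore addressing a non-issue; the passage from general Feynman diagrams to forests is already absorbed into the isomorphism (\ref{eq:definition_algebra_diagrams}).

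The actual content of the theorem is this: given an element $x\in\As^{\FI}_n(m)_{\Q}$ that is primitive---a priori just a $\Q$-linear combination of forest diagrams, possibly of large size, whose reduced coproduct vanishes---show that $x$ lies in the span of size-$1$ forests. Your statement that ``a diagram is primitive precisely when it has a single connected component'' conflates one direction (connected $\Rightarrow$ primitive, which is immediate) with the other (primitive $\Rightarrow$ combination of connected diagrams), which is exactly what must be proved. The paper's argument constructs, for each $k$, an explicit retraction
\[
s^{k+1}_n(F)=F-\tfrac{1}{(k+1)!}\,\mu^{k}\circ\overline{\Delta}^{k}(F)=\tfrac{1}{(k+1)!}\sum_{\sigma\in\Sy_{k+1}}\bigl(F-(F)_{\sigma}\bigr)
\]
from $F^{k+1}$ onto $F^{k}$, where $(F)_\sigma$ stacks the trees of $F$ in the order prescribed by $\sigma$. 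Each difference $F-(F)_\sigma$ is a sum of size-$k$ forests by $\STU$, so the image lands in $F^k$; and because $\overline{\Delta}^k$ vanishes on $P^k$ (Lemma~\ref{lem:properties_of_reduced_comultiplication}), the map $s^{k+1}_n$ restricts to the identity on $P^k$, forcing $P^k\subset F^k$. The averaging over $\Sy_{k+1}$ is essential here and is the reason the argument is rational; your proposal contains no analogue of this step.
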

As its name suggests, the \emph{size filtration} $F^{\bullet}\As^{\FI}(m)_{\Q}$ filters $\As^{\FI}(m)_{\Q}$ by the number of connected components of diagrams, i.e.\ $F^k\As^{\FI}(m)_{\Q}$ is generated by forest diagrams with $\leq k$ trees. The main ingredient of the proof of Theorem \ref{thmI:primitive_versus_size_filtration} requires averaging over permutations, which is the reason why we cannot extend Theorem \ref{thmI:primitive_versus_size_filtration} over $\Z$ with the current methods.

So far, the situation is summarized by the following commutative diagram:
\begin{equation}\label{diag:summary_1}\begin{tikzcd}
	{\Q\Ds^T(m)} && {\Prim(\As^{\FI}(m)_{\Q})} & {\As^{\FI}(m)_{\Q}} \\
	{\Ls L(m)_{\Q}}
	\arrow["R"', two heads, from=1-1, to=2-1]
	\arrow["{Z^{GH}}"', from=2-1, to=1-3]
	\arrow["\subseteq"{description}, draw=none, from=1-3, to=1-4]
	\arrow["\tilde{\iota}", two heads, from=1-1, to=1-3]
\end{tikzcd}\end{equation}
and the kernel of $\tilde{\iota}$ contains the $\1T,\AS,\IHX$ relations. It is a standard application of clasper calculus that the realization map $R$ satisfies those relations. But since $\STU$ does not preserve the size (or the primitive) filtration, it is not clear what other relations hold in $\Prim(\As^{\FI}(m)_{\Q})$, hence what other relations $R$ may satisfy.

Actually, over $\Q$ a single additional type of relation is needed: Conant's $\STU^2$ relation \cite{conant2008stu2}
\begin{equation}\label{eq:stu2_intro}
        \vcenter{\hbox{\includegraphics[scale=0.8]{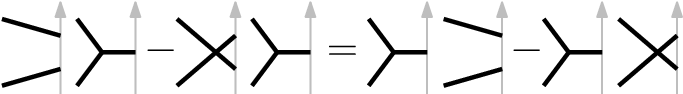}}}
    \end{equation}
where the four terms are tree (or forest) diagrams identical outside of the shown parts. As for the $\STU$ relation, we sometimes write $\STU^2$ as $F^{=Y}-F^{\times Y}=F^{Y=}-F^{Y\times}$. This relation preserves the size filtration.

Through the introduction of \emph{graph of forests}, we describe a concrete presentation by generators and relations of the primitive Lie algebra $\Prim(\As^{\FI}(m)_{\Q})$ (Theorem \ref{thm:presentation_primitive_Lie_algebra} in the text). The main ideas of the proof are outlined at the beginning of Section \ref{chap:lie_algebras_of_trees}. This constitutes the main technical result of this paper.
\begin{theoremIntro}[Primitive Lie algebra of trees (Theorem \ref{thm:presentation_primitive_Lie_algebra})]\label{thmI:primitive_Lie_algebra_of_trees}
The primitive Lie algebra $\Prim(\As^{\FI}(m)_{\Q})$ is naturally\footnote{Here, \emph{naturally} means that the isomorphism is induced by the map sending a diagram to itself.} isomorphic to the graded \emph{Lie algebra of trees}
\begin{equation}
	\Ls^{\FI}(m)_{\Q}\coloneqq\faktor{\Q\Ds^T(m)}{\langle\1T,\AS,\IHX,\STU^2\rangle}
\end{equation}
endowed with the bracket $[T,T']\coloneqq \overrightarrow{(T'T)(TT')}$ for any $T,T'\in\Ds^T(m)$ and extended linearly\footnote{See Definition \ref{def:vector_from_F_to_F'} for the $\overrightarrow{(\cdot)}$ notation.}.
\end{theoremIntro}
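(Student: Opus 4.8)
**Proof proposal for Theorem \ref{thmI:primitive_Lie_algebra_of_trees} (= Theorem \ref{thm:presentation_primitive_Lie_algebra} in the text).**

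The plan is to compare $\Ls^{\FI}(m)_{\Q}$ with $\Prim(\As^{\FI}(m)_{\Q})$ through the canonical map $\tilde{\iota}\colon \Q\Ds^T(m)\epic\Prim(\As^{\FI}(m)_{\Q})$ that sends a tree diagram to itself (well-defined and surjective by Theorem \ref{thmI:primitive_versus_size_filtration}). First I would check that $\tilde{\iota}$ descends to the quotient $\Ls^{\FI}(m)_{\Q}$, i.e.\ that $\1T$, $\AS$, $\IHX$ and $\STU^2$ all vanish in $\Prim(\As^{\FI}(m)_{\Q})\subseteq \As^{\FI}(m)_{\Q}$. The first three are immediate since they already hold in $\As^{\FI}(m)_{\Q}$ by definition. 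For $\STU^2$, the point is that although $\STU$ does not preserve the size/primitive filtration, the combination $F^{=Y}-F^{\times Y}-F^{Y=}+F^{Y\times}$ does: each individual $\STU$ move rewrites a trivalent vertex into a difference of two diagrams with one more component, and the two $\STU$ expansions on the two sides of $\STU^2$ produce the same leading ``extra-component'' term, so it cancels and the relation holds already at the level of $\Prim$. This gives a well-defined surjective Lie algebra map $\Ls^{\FI}(m)_{\Q}\epic \Prim(\As^{\FI}(m)_{\Q})$ (that it is a map of Lie algebras is a formal check that the bracket $[T,T']=\overrightarrow{(T'T)(TT')}$ matches the commutator in the Hopf algebra, using that primitives bracket to primitives). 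It remains to produce an inverse, equivalently to show this map is injective.

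For injectivity I would argue that the relations $\1T,\AS,\IHX,\STU^2$ already account for \emph{all} relations among tree diagrams inside $\As^{\FI}(m)_{\Q}$. The mechanism is the one advertised in the introduction: one studies the \emph{graph of forests}, whose vertices are forest diagrams and whose edges are the $\STU$ relations connecting them, and one analyses the cycles (``flip graphs''). Concretely, if a $\Q$-linear combination of trees $\sum c_i T_i$ vanishes in $\As^{\FI}(m)_{\Q}$, then by definition it is a consequence of $\1T$ and $\STU$ relations applied to diagrams of \emph{arbitrary} size; the task is to rewrite this derivation so that it only ever uses $\1T,\AS,\IHX$ and $\STU^2$ among trees. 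One shows that any ``excursion'' from the tree stratum into higher-size strata and back corresponds to a closed cycle in the graph of forests, and that every such cycle is generated by the elementary squares coming precisely from $\STU^2$ together with the $\IHX$-type and disjoint-commutativity squares. This is a Reidemeister-style / confluence argument: classify the local configurations (two $\STU$ moves at disjoint vertices, at adjacent vertices sharing an edge, at a vertex and an edge meeting it, etc.) and check that each yields a relation already implied by $\STU^2,\IHX,\AS,\1T$. I expect this cycle analysis — showing the flip graph of forests is ``simply connected relative to'' the named relations — to be the main obstacle and the technical heart of the proof.

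Finally, to package the result: combining the surjection $\Ls^{\FI}(m)_{\Q}\epic\Prim(\As^{\FI}(m)_{\Q})$ with the injectivity just established yields the asserted isomorphism, and naturality (the isomorphism is induced by $T\mapsto T$) is built into the construction. The Lie-algebra structure statement is then just bookkeeping: the bracket $[T,T']\coloneqq\overrightarrow{(T'T)(TT')}$ on $\Q\Ds^T(m)$ is the standard formula for the commutator of group-like-type concatenations, and one verifies it passes to the quotient and agrees under $\tilde{\iota}$ with the commutator bracket on $\Prim(\As^{\FI}(m)_{\Q})$, using bilinearity, antisymmetry (which follows from $\AS$ together with the definition of $\overrightarrow{(\cdot)}$) and the Jacobi identity (which will follow from $\IHX$ and $\STU^2$ via the same flip-graph analysis).
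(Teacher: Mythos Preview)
Your overall strategy---surjectivity of $\tilde{\iota}$ from Theorem~\ref{thmI:primitive_versus_size_filtration}, then injectivity via a cycle analysis in graphs of forests---matches the paper's architecture. But the cycle analysis you sketch has a genuine gap: you claim every cycle in the flip graph is generated by ``elementary squares coming precisely from $\STU^2$ together with the $\IHX$-type and disjoint-commutativity squares.'' This is false at the intermediate filtration levels. The paper identifies the graph of \emph{labelled} forests $\widetilde{\Fs}(T_1,\dots,T_s)$ with a box product of permutographs (Proposition~\ref{prop:graph_of_labelled_forests=product_of_permutographs}), and then uses the Coxeter presentation of $\Sy_n$ to show its $1$-cycles are generated by length-$4$ cycles (far-commutativity $\Rightarrow$ $\STU^2$) \emph{and} length-$6$ cycles (braiding relations $\Rightarrow$ the $\hexagon$ relation~(\ref{eq:hexagon_intro})). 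The hexagon relation is genuinely new; it is not a consequence of $\STU^2$ and $\IHX$ when the size equals $n-1$ (Remark~\ref{rem:simpler_relations_in_many_cases}), and without it the map~$\tilde\pi^s_n$ of~(\ref{eq:pi_tilde}) is not well-defined. Only at size~$1$ does~$\hexagon R$ become vacuous, which is why it disappears from the final presentation of $\Ls^{\FI}(m)_{\Q}$.

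There is a second missing ingredient: the passage from labelled to unlabelled forests. The permutograph argument only controls cycles in $\widetilde{\Fs}(T_1,\dots,T_s)$; to get the statement for $\Fs(T_1,\dots,T_s)$ the paper uses a covering-space transfer (Proposition~\ref{prop:any_cycle_in_graph_forests_has_a_multiple_that_lifts}), which shows only that some \emph{multiple} of any cycle lifts. This is precisely where the argument needs $\Q$-coefficients and is one of the two places the proof fails over~$\Z$. Your proposal does not mention this, nor the averaging formula~(\ref{eq:average_barycenter}) that underlies the sections~$\pi^s_n$. Relatedly, the paper does not attempt your direct ``rewrite any derivation'' argument; instead it constructs the entire chain of injections with sections $\Fs^{\FI,1}_n(m)_\Q\hookrightarrow\cdots\hookrightarrow\Fs^{\FI,n}_n(m)_\Q\cong\As^{\FI}_n(m)_\Q$ (Theorem~\ref{thm:presentation_primitive_filtration}), and injectivity at size~$1$ falls out. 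Your ``local configuration'' classification (disjoint vertices, adjacent vertices sharing an edge, \dots) is not the right bookkeeping; the correct one is the Coxeter-relation trichotomy for adjacent transpositions.
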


In the course of proving the above theorem, we actually obtain a concrete presentation of the primitive filtration of $\As^{\FI}(m)_{\Q}$ as well. For $k\geq 1$, let $\Ds^k(m)\subset\Ds^F(m)$ be the subset of size $k$ forest diagrams. A new relation is necessary:
\begin{definition*}
    A \emph{$\hexagon$-relation} is a sum of forest diagrams of the form
    \begin{equation}\label{eq:hexagon_intro}
        \vcenter{\hbox{\includegraphics[scale=0.8]{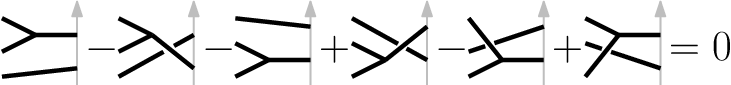}}}
    \end{equation}
    where all terms are identical forest diagrams outside of the part shown.
\end{definition*}
Like $\STU^2$, the $\hexagon$-relation preserves the size filtration and $\hexagon R\subset\Q\Ds(m)$ denotes the submodule generated by all $\hexagon$-relations. We obtain the following result, which is Theorem \ref{thm:presentation_primitive_filtration}:
\begin{theoremIntro}[Presentation of the primitive filtration (Theorem \ref{thm:presentation_primitive_filtration})]\label{thmI:presentation_of_primitive_filtration}
    For all $k\geq 1$, the \emph{graded $\Q$-module of size $k$ forests}
    \begin{equation}\label{eq:presentation_filtration}
        \Fs^{\FI,k}(m)_{\Q}\coloneqq\faktor{\Q\Ds^k(m)}{\langle\1T,\AS,\IHX,\STU^2,\hexagon R\rangle}
    \end{equation}
    is naturally isomorphic to the $k$-th step of the reduced primitive filtration of $\As_{\geq 1}^{\FI}(m)_{\Q}$. In particular, there is an isomorphism of filtrations
\[\begin{tikzcd}
	{\Ls^{\FI}(m)_{\Q}} & {\Fs^{\FI,1}(m)_{\Q}} & {\Fs^{\FI,2}(m)_{\Q}} & \dots & {\subset\As^{\FI}_{\geq 1}(m)_{\Q}} \\
	{\Prim(\As^{\FI}(m)_{\Q})} & {P^1\As^{\FI}_{\geq 1}(m)_{\Q}} & {P^2\As^{\FI}_{\geq 1}(m)_{\Q}} & \dots & {\subset\As^{\FI}_{\geq 1}(m)_{\Q}}
	\arrow[equal, from=1-5, to=2-5]
	\arrow[equal, from=1-1, to=1-2]
	\arrow[equal, from=2-1, to=2-2]
	\arrow["\iota"',"\cong", from=1-1, to=2-1]
	\arrow["\cong", from=1-2, to=2-2]
	\arrow["\cong", from=1-3, to=2-3]
	\arrow[hook, from=1-2, to=1-3]
	\arrow[hook, from=2-2, to=2-3]
	\arrow[hook, from=1-3, to=1-4]
	\arrow[hook, from=2-3, to=2-4]
\end{tikzcd}\]
where the bottom line is the reduced primitive filtration of $\As^{\FI}_{\geq 1}(m)_{\Q}$. Moreover, the vertical isomorphisms preserve the grading. 
\end{theoremIntro}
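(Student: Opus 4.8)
The plan is to prove the statement by induction on the size $k$, comparing the $k$-th filtration steps via their associated graded pieces — which the Hopf-algebra structure of $\As^{\FI}(m)_{\Q}$ pins down — and then reducing the remaining combinatorics to a classification of cycles in a graph of forests, refining the flip-graph argument already used for Theorem \ref{thm:presentation_primitive_Lie_algebra}. The base case $k=1$ is exactly Theorem \ref{thm:presentation_primitive_Lie_algebra}: it provides the left-hand vertical isomorphism $\iota$ and the identification $P^{1}\As^{\FI}_{\geq 1}(m)_{\Q}=\Prim(\As^{\FI}(m)_{\Q})\cong\Ls^{\FI}(m)_{\Q}=\Fs^{\FI,1}(m)_{\Q}$.

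First I would record the Hopf-algebraic input. The coproduct of $\As^{\FI}(m)_{\Q}$ is deconcatenation of a diagram along its connected components, so $\As^{\FI}(m)_{\Q}$ is a connected graded cocommutative Hopf algebra over a field of characteristic zero; by the Milnor--Moore theorem it is the enveloping algebra $U(\Prim(\As^{\FI}(m)_{\Q}))$, its reduced primitive filtration coincides with the Poincar\'e--Birkhoff--Witt (coradical) filtration, and $P^{k}/P^{k-1}\cong\Sym^{k}(\Prim(\As^{\FI}(m)_{\Q}))$ for all $k\geq 1$ (with $P^{0}\coloneqq 0$). Combining this with Theorem \ref{thm:comparison_primitive_and_size_filtration} (the reduced primitive filtration is the size filtration) and Theorem \ref{thm:presentation_primitive_Lie_algebra} ($\Prim(\As^{\FI}(m)_{\Q})\cong\Ls^{\FI}(m)_{\Q}$), one gets that $P^{k}\As^{\FI}_{\geq 1}(m)_{\Q}$ is spanned by forest diagrams of size at most $k$, and that $P^{k}/P^{k-1}\cong\Sym^{k}(\Ls^{\FI}(m)_{\Q})$, with the internal degree gradings matching on both sides.

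Next I would set up the comparison morphisms $\Phi_{k}\colon\Fs^{\FI,k}(m)_{\Q}\to P^{k}\As^{\FI}_{\geq 1}(m)_{\Q}$ induced by sending a forest diagram to itself. These are well defined: $\1T,\AS,\IHX$ hold in $\As^{\FI}(m)_{\Q}$ by definition, $\STU^{2}$ is a twofold consequence of $\STU$, and each $\hexagon$-relation is obtained by composing $\STU$-resolutions around a closed loop — this is precisely the consistency that makes the $\overrightarrow{(\cdot)}$-notation of Definition \ref{def:vector_from_F_to_F'} well defined. The $\Phi_{k}$ preserve the internal grading and are compatible with the inclusions $\Fs^{\FI,k-1}\hookrightarrow\Fs^{\FI,k}$ and $P^{k-1}\hookrightarrow P^{k}$ by construction, and they are surjective by the previous paragraph; the entire content is thus their injectivity.

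For injectivity I would filter $\Fs^{\FI,k}(m)_{\Q}$ by the size of forests; by the inductive hypothesis the map on the size-$\leq k-1$ part is the isomorphism $\Phi_{k-1}$, so by a d\'evissage (five lemma along the short exact sequences cut out by the size filtration) it suffices to see that $\Phi_{k}$ induces an isomorphism on the top graded piece onto $P^{k}/P^{k-1}\cong\Sym^{k}(\Ls^{\FI}(m)_{\Q})$; this in particular forces $\Fs^{\FI,k-1}\hookrightarrow\Fs^{\FI,k}$ to be injective, so the top row of the displayed diagram is genuinely a filtration. This last step is where the graph of forests of Section \ref{chap:lie_algebras_of_trees} is used: one organises forest diagrams (vertices) together with the $\STU$-relations relating them — through which a forest is rewritten modulo forests of strictly smaller size — into a graph, the flip graph, in which the relations $\STU^{2}$ and $\hexagon R$ appear as the two families of fundamental cycles (the commuting and braid type cycles of adjacent flips) and the bracket $[T,T']=\overrightarrow{(T'T)(TT')}$ is read off as the effect of a path between the forests $T'T$ and $TT'$. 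The main obstacle of the whole argument — the main technical result of Section \ref{chap:lie_algebras_of_trees} — is the classification of these cycles: every cycle of the flip graph is generated by $\STU^{2}$- and $\hexagon$-cycles, equivalently every relation among $\STU$-relations on forests of size $\leq k$ follows from $\STU^{2}$ and $\hexagon R$ modulo $\1T,\AS,\IHX$. Feeding this into $\Phi_{k}$ identifies the induced map on the top graded piece with the Poincar\'e--Birkhoff--Witt isomorphism onto $\Sym^{k}(\Ls^{\FI}(m)_{\Q})$, completing the induction and yielding the displayed ladder of grading-preserving isomorphisms.
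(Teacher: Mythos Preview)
Your argument has two genuine gaps.

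First, there is a circularity in the base case: you invoke Theorem \ref{thm:presentation_primitive_Lie_algebra} for $k=1$, but in the paper's logical order that theorem is a \emph{corollary} of Theorem \ref{thm:presentation_primitive_filtration} (see Section \ref{sec:The Lie algebra of trees}). No independent proof of the $k=1$ case is available to cite; the machinery that proves it is exactly the machinery that proves the full statement.

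Second, the inductive step is incomplete at its key point. You need the induced map on top graded pieces, $\Fs^{\FI,k}(m)_{\Q}/\iota^{k-1}(\Fs^{\FI,k-1}(m)_{\Q})\to P^{k}/P^{k-1}\cong\Sym^{k}(\Ls^{\FI}(m)_{\Q})$, to be an isomorphism, and you assert that the cycle classification ``identifies'' it with the PBW isomorphism. But that is not what the cycle classification provides. Corollary \ref{cor:1-homology_of_labelled_forests_graph_generated_by_squares_and_hexagons} and Proposition \ref{prop:any_cycle_in_graph_forests_has_a_multiple_that_lifts} say that for forests $F,F'$ in the same graph of forests the element $\overrightarrow{FF'}$ is well defined \emph{in the target} $\Fs^{\FI,k-1}(m)_{\Q}$; this is a statement about the domain of the sections $\pi^{k}$, not about the cokernel of $\iota^{k-1}$. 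To extract your isomorphism you would still need to construct $\pi^{k}$ (Proposition \ref{prop:map_pi_descends}) and verify $\pi^{k}\circ\iota^{k-1}=\id$ (Proposition \ref{prop:pi_section_of_iota}); without that you cannot even conclude that $\iota^{k-1}$ is injective, which your d\'evissage presupposes.

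The paper's proof is structurally different and sidesteps both problems. It neither inducts on $k$ nor appeals to PBW for the associated graded. Instead it builds explicit left inverses $\pi^{k}_{n}\colon\Fs^{\FI,k}_{n}(m)_{\Q}\to\Fs^{\FI,k-1}_{n}(m)_{\Q}$ via the average barycenter (Definition \ref{def:average_barycenter}) and proves $\pi^{k}_{n}\circ\iota^{k-1}_{n}=\id$ directly from the diagrammatic $\STU$ lemma (Lemma \ref{lem:diagrammatic_STU}); this makes every $\iota^{k}_{n}$ injective simultaneously. One then observes that the natural surjections $\Fs^{\FI,k}_{n}(m)_{\Q}\twoheadrightarrow F^{k}\As^{\FI}_{n}(m)_{\Q}$ are compatible with the $\iota$'s and the inclusions $i^{k}_{n}$, and that the rightmost one ($k=n$) is the identity on $\As^{\FI}_{n}(m)_{\Q}$; commutativity of the ladder then forces each vertical surjection to be injective, hence an isomorphism. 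The passage from the size filtration to the primitive filtration is Theorem \ref{thm:comparison_primitive_and_size_filtration}, which is proved independently.
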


\begin{remark}
    In degree $n$ and size $n$, $\STU^2$ becomes $\4T$ and $\hexagon R$ is trivial. In degree $n$ and size $1$, $\hexagon R$ is trivial. In degree $n$ and size $<n-1$, $\hexagon R$ follows from $\STU^2$ and $\IHX$.
\end{remark}

Lastly, the above results combine into a concrete presentation of the rational Goussarov--Habiro Lie algebra, as mentioned at the beginning of this section:
\begin{theoremIntro}[Presentation of the rational Goussarov--Habiro Lie algebra]\label{thmI:presentation_rational_Habiro_Lie_algebra}
    There is an isomorphism of graded Lie $\Q$-algebras
    \begin{equation}
        R\colon\Ls^{\FI}(m)_{\Q}\longrightarrow\Ls L(m)_{\Q}
    \end{equation}
    that sends a tree diagram $T$ to the class $R(T)$ of the string link obtained by performing a clasper surgery along a tree clasper $C_T$ whose underlying diagram is $T$. By the tree preservation theorem, the inverse of $R$ is induced by the Kontsevich integral and denoted $Z^{GH}$.
\end{theoremIntro}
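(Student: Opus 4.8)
The plan is to upgrade the surjective realization map $R\colon\Q\Ds^T(m)\epic\Ls L(m)_{\Q}$ to an isomorphism out of the quotient $\Ls^{\FI}(m)_{\Q}$, and to use the Kontsevich integral to produce its inverse. First I would check that $R$ factors through $\Ls^{\FI}(m)_{\Q}$. That $R$ kills $\1T$, $\AS$ and $\IHX$ is the standard package of clasper calculus (a leaf bounding an embedded disk, antisymmetry of a trivalent vertex under reversal of the cyclic order, and the IHX move on tree claspers), while $R(\STU^2)=0$ is Conant's computation \cite{conant2008stu2} transported to string links: the four tree claspers appearing in an $\STU^2$ relation produce pairwise $C_{n+1}$-equivalent string links. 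Hence $R$ factors as $\Q\Ds^T(m)\xrightarrow{\tilde{\iota}}\Ls^{\FI}(m)_{\Q}\xrightarrow{\bar R}\Ls L(m)_{\Q}$ with $\bar R$ still surjective and degree-preserving. To see that $\bar R$ is a morphism of Lie algebras, I would compare the bracket $[T,T']=\overrightarrow{(T'T)(TT')}$ of Theorem~\ref{thmI:primitive_Lie_algebra_of_trees} with the Goussarov--Habiro bracket on $\Ls L(m)_{\Q}$, i.e.\ the group commutator in the nilpotent quotient $L(m)/C_{i+j+1}$ when $T,T'$ have degrees $i,j$: clasper calculus identifies the commutator of tree claspers realizing $T$ and $T'$ with a surgery along a clasper whose underlying diagram is $(T'T)(TT')$, modulo degree $>i+j$.

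Next I would exhibit $Z^{GH}$ as the inverse. By construction $Z^{GH}\colon\Ls L(m)_{\Q}\to\As^{\FI}(m)_{\Q}$ is well-defined (the degree-$n$ part of the Kontsevich integral is unchanged by $C_{n+1}$-equivalence and is additive on $C_n$-trivial string links) and, by Theorem~\ref{thmI:primitive_versus_size_filtration}, lands in $\vspan_{\Q}(\text{trees})=\Prim(\As^{\FI}(m)_{\Q})$. Composing, for a degree-$n$ tree $T$ one has $Z^{GH}(R(T))=Z_n(\sigma(C_T))$, which the tree preservation theorem (Theorem~\ref{thmI:Tree_preservation_theorem}) identifies with the degree-$n$ part of $1+T+O(n+1)$, namely $T$ itself. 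Under the isomorphism $\Prim(\As^{\FI}(m)_{\Q})\cong\Ls^{\FI}(m)_{\Q}$ of Theorem~\ref{thmI:primitive_Lie_algebra_of_trees}, which is induced by sending each diagram to itself, this says exactly $Z^{GH}\circ R=\tilde{\iota}$, equivalently $Z^{GH}\circ\bar R=\id_{\Ls^{\FI}(m)_{\Q}}$. So $\bar R$ has a left inverse and is therefore injective; being also surjective it is bijective, and a bijective Lie-algebra morphism is an isomorphism of graded Lie $\Q$-algebras, whose inverse—automatically a Lie morphism—is $Z^{GH}$. Writing $R$ for $\bar R$ gives the statement.

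The genuine content of this theorem is imported from earlier results rather than produced in the assembly above: the bracketed presentation $\Prim(\As^{\FI}(m)_{\Q})\cong\Ls^{\FI}(m)_{\Q}$ is the hard technical theorem, proved through the flip-graph and graph-of-forests analysis, and the coincidence of the primitive and size filtrations relies on averaging over permutations, which is exactly why these methods do not yield an integral statement. Within the present proof the only non-formal steps are the two clasper-calculus inputs of the first paragraph: that $R$ kills $\STU^2$, and the identification of the Goussarov--Habiro bracket with $\overrightarrow{(T'T)(TT')}$. I expect the $\STU^2$ verification—carrying Conant's relation from his geometric setting over to surgeries on the trivial string link—to be the main place where care is needed; everything else is the principle that a one-sided inverse of a surjection is two-sided.
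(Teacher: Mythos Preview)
Your proposal is correct and follows essentially the same route as the paper: the paper packages the construction and well-definedness of $R$ (checking $\1T,\AS,\IHX,\STU^2$ and the Lie bracket via clasper calculus) into Theorem~\ref{prop:realizing_trees}, and then uses the tree preservation theorem (Theorem~\ref{thm:tree_preservation_theorem}) exactly as you do to produce $Z^{GH}$ as a left inverse, hence an inverse. One small attribution point: the $\STU^2$ verification is not in \cite{conant2008stu2} (that paper handles the diagram-algebra side); the clasper-calculus proof that $R$ kills $\STU^2$ is carried out explicitly in the paper itself (following \cite{kosanovic2020thesis} for knots), so your instinct that this is ``the main place where care is needed'' is right, but the reference you give doesn't supply it.
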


The commutative diagram \ref{diag:summary_1} can then be completed into
\begin{equation}
    \begin{tikzcd}
	{\Ls^{\FI}(m)_{\Q}} & {\Prim(\As^{\FI}(m)_{\Q})} & {\As^{\FI}(m)_{\Q}} \\
	{\Ls L(m)_{\Q}} && {\As L(m)_{\Q}}
	\arrow["{R}"',"\cong", from=1-1, to=2-1]
	\arrow["{R^v}"',"\cong",bend right=25, from=1-3, to=2-3]
	\arrow["\chi"', from=2-1, to=2-3]
	\arrow["\iota","\cong"', from=1-1, to=1-2]
	\arrow["\subset",hook, from=1-2, to=1-3]
	\arrow["{Z^{GH}}"', from=2-1, to=1-2]
	\arrow["{Z^v}"', bend right=25, from=2-3, to=1-3]
\end{tikzcd},
\end{equation}
where: $\As L(m)_{\Q}$ is the Hopf algebra of string links associated to finite-type invariants, $R$ is an isomorphism by Theorem \ref{thmI:presentation_rational_Habiro_Lie_algebra}, $R^v$ is an isomorphism by Kontsevich's theorem, and $\iota$ is an isomorphism by Theorem \ref{thmI:primitive_Lie_algebra_of_trees}. Consequently, this implies that the comparison $\chi$ is injective over $\Q$. This yields an alternative proof of the rational Goussarov--Habiro conjecture, which had already been shown by Massuyeau \cite{massuyeau2006finitetype} using completely different methods. This is explicited in Section \ref{sec:Application_to_rational_GHC}.

\subsection{Further directions}
\subsubsection{Towards the integral story}
In this work, we had to work over the rationals for two main reasons: the averaging over permutations and the use of the Kontsevich integral. However, many things still work over $\Z$: one can define an adequate Lie $\Z$-algebra of tree diagrams and it surjects onto the Goussarov--Habiro Lie algebra $\Ls L(m)$ as in Theorem \ref{prop:realizing_trees}. We plan on investigating new relations in this Lie $\Z$-algebra, and its comparison with the concordance version of the theory \cite{jimrobpeter2016geometricfiltrations} in future work.

\subsubsection{Framed string links}
In this paper, we restricted our attention to unframed string links in order to make the presentation clearer. However, the main algebraic ingredient (the identification of the primitive Lie algebra of $\As(m)_{\Q}$) holds with and without the $\1T$ relation. By considering the Kontsevich integral for \emph{framed string links} \cite{le1996universal}, one should obtain a presentation of the framed version of the Goussarov--Habiro Lie algebra.

\subsubsection{Embedding calculus approach}
We use the Kontsevich integral, a universal Vassiliev invariant over $\Q$, in an essential way to reduce the Goussarov--Habiro conjecture to a combinatorial question. Similarly, the existence of a universal Vassiliev invariant over $\Z$ would reduce the Goussarov--Habiro conjecture (over $\Z$!) to a purely combinatorial problem. A current candidate for such a universal invariant over $\Z$ is the \emph{Goodwillie--Weiss embedding calculus tower} \cite{budney2017embedding,kosanovic2020thesis,Boavida2021galois}. Actually, Conant's work \cite{conant2008stu2} on the $\STU^2$ relation was an important ingredient for identifying modules in the second page of the spectral sequence arising in the study of the Goodwillie--Weiss embedding calculus tower \cite{budney2017embedding,shi2023goodwillie}. The present work motivates the investigation of the string link analogous of the embedding calculus tower.

At the moment, the best known result about universality of the Goodwillie--Weiss embedding calculus tower is that it is a universal Vassiliev invariant over $\Z_{(p)}$ in degree $\leq p+1$ \cite[Theorem A]{Boavida2021galois}. Combining a \emph{string link} version of this result with the present work could lead to a proof of the Goussarov--Habiro conjecture over $\Z_{(p)}$ in degree $n<p$.

\subsection{Notations} Lower indices usually refer to \emph{degrees} of diagrams and claspers, and upper indices usually refer to filtrations of diagrams by \emph{size} (number of connected components). The notation $\FI$, for \emph{framing independence}, refers to the presence of the relation $\1T$. Sets of diagrams are denoted by $\Ds$ aggremented with various indices, Hopf algebras by $\As$ and Lie algebras by $\Ls$. 

\addtocontents{toc}{\protect\setcounter{tocdepth}{0}}
\subsection*{Aknowledgements}
This work was done under the supervision of Peter Teichner, whom I wish to thank for his guidance. I also wish to thank James Conant for helpful discussions and comments.
\addtocontents{toc}{\protect\setcounter{tocdepth}{2}}

\section{Primitive Lie algebra of Feynman diagrams}\label{chap:lie_algebras_of_trees}
The goal of this section is to identify the primitive Lie algebra $\Prim(\As^{\FI}(m)_{\Q})$ of the Hopf algebra $\As^{\FI}(m)_{\Q}$ Feynman diagrams on $m$ strands.

In the case of knots, i.e.\ $m=1$, tree diagrams generate the primitive Lie algebra\footnote{Since $\As^{\FI}(1)_{\Q}$ is commutative, this Lie algebra has a trivial bracket.} $\Prim(\As^{\FI}(1)_{\Q})$ of $\As^{\FI}(1)_{\Q}$ \cite{bar1995vassiliev}, which, by \cite{conant2008stu2}, is isomorphic to the space of trees $$\Q\Ds^T(1)/\langle\1T,\AS,\IHX,\STU^2\rangle.$$ See also \cite[Theorem G3]{kosanovic2020thesis} for a concise presentation of these results. Commutativity of $\As^{\FI}(1)_{\Q}$ implies, by the Milnor--Moore theorem, that there is an isomorphism between the primitive part $\Prim(\As^{\FI}(1)_{\Q})$ and the inseparable quotient $(\As^{\FI}(1)_{\Q})^{I}$ (where any diagram which is a product of non-trivial diagrams is set to zero). This fact is essential in Conant's result \cite{conant2008stu2}, which actually holds over $\Z$.

In the case of string links, tree diagrams also generate the primitive part $\Prim(\As^{\FI}(m)_{\Q})$. However, $\As^{\FI}(m)_{\Q}$ is not commutative when $m>1$ and the argument of Conant does not translate a priori. In this chapter, we prove that the conclusion still holds: $\Prim(\As^{\FI}(m)_{\Q})$ is isomorphic to $\Ls^{\FI}(m)_{\Q}\coloneqq\Q\Ds^T(m)/\langle\1T,\AS,\IHX,\STU^2\rangle$.

In Section \ref{sec:reminder_algebra_of_diagrams}, we recall the definitions of the algebra of diagrams $\As^{\FI}(m)$ \cite{bar1995vassiliev} and its bialgebra structure. In particular, we use a slightly different presentation of $\As^{\FI}(m)$, by forest diagrams instead of more general Feynman diagrams. This is more convenient for two reasons: the primitive part will be presented as a space of tree diagrams, and tree diagrams are more convenient\footnote{It makes the \emph{zip construction} easier to work with.} in Habiro's clasper calculus \cite{habiro2000claspers}, which is the topic of Chapter \ref{chap:claspers}.

In Section \ref{sec:Primitive filtration of forest diagrams}, we show that the primitive part of $\As^{\FI}(m)_{\Q}$ is indeed generated by tree diagrams. To that end, we introduce the \emph{(restricted) primitive filtration} $P^{\bullet}\As^{\FI}(m)_{\Q}$ \cite{cartier2007primer} and compare it with the \emph{size filtration} $F^{\bullet}\As^{\FI}(m)_{\Q}$, defined via the size\footnote{The \emph{size} of a forest diagram is its number of connected components, i.e.\ its number of trees.} of forest diagrams. The main ingredient is the definition of sections $s^{k+1}\colon F^{k+1}\As^{\FI}(m)_{\Q}\epic F^{k}\As^{\FI}(m)_{\Q}$ to the inclusions $i^k\colon F^{k}\As^{\FI}(m)_{\Q}\monic F^{k+1}\As^{\FI}(m)_{\Q}$. For a forest diagram $F$ of size $k+1$ composed of trees $T_1,\dots,T_{k+1}$, it is given by
\begin{equation*}
    s^{k+1}(F)=\frac{1}{(k+1)!}\sum_{\sigma\in\Sy_{k+1}}(F-(F)_{\sigma})
\end{equation*}
where $(F)_{\sigma}\coloneqq T_{\sigma(1)}\cdot\ldots\cdot T_{\sigma(k+1)}$ is a \emph{stacked version} of $F$. For $\sigma\in\Sy_{k+1}$, the forest diagram $(F)_{\sigma}$ contains the same trees as in $F$, but they are stacked above each other (in the order prescribed by $\sigma$) whereas the trees in $F$ may be shuffled together. One can obtain $F$ from $(F)_{\sigma}$ by a sequence of \emph{slide moves}: exchanges of adjacent legs belonging to distinct trees. By $\STU$, this allows us to write $F-(F)_{\sigma}$ as a sum of forest diagrams with $k$ trees: for each slide move, $\STU$ yields a term where the two involved trees are attached together. In degree $n$, the composition $$s^{2}\circ\dots\circ s^n\colon\As^{\FI}_n(m)_{\Q}\epic P^{1}\As_n^{\FI}(m)_{\Q}=Prim(\As^{\FI}(m)_{\Q})_n$$ yields a projector onto the submodule of primitive degree $n$ diagrams, which is thus generated by tree diagrams.

Section \ref{sec:permutographs_and_graphs_of_forests} develops a combinatorial framework aimed at finding a concrete presentation, by generators and relations, of the primitive part of $\As^{\FI}(m)_{\Q}$, together with a projection from $\As^{\FI}(m)_{\Q}$ onto it. The idea is simple: to investigate the minimal set of relations required for defining analogous of the inclusions $i^{k}$ and sections $s^{k+1}$, but now between concrete\footnote{By \emph{concrete} modules, we mean modules presented by explicit generators and relations, not submodules of larger modules.} modules of forest diagrams. In degree $n$, we start with the modules $\Q\Ds_n^k(m)/\langle\1T,\AS,\IHX\rangle$ generated by forest diagrams of size $k$ and degree $n$, for $1\leq k\leq n$. The $\STU$ relation is not \emph{size-homogeneous} and we cannot add it as it is. This is the source of the problem.

First, what are the minimal relations required for defining maps $$\tilde{\iota}^k\colon\frac{\Q\Ds_n^k(m)}{\langle\1T,\AS,\IHX\rangle}\dashrightarrow\frac{\Q\Ds_n^{k+1}(m)}{\langle\1T,\AS,\IHX\rangle}\;\text{ ?}$$ By $\STU$, a size $k$ forest should be sent to the difference obtained by breaking apart one trivalent vertex adjacent to a leg. However, the result may depend on the chosen vertex. Modding out by $\STU^2$ makes it well-defined \cite{conant2008stu2}. We obtain the sequence of maps
\begin{equation}\label{eq:sequence_of_inclusions_intro}
    \frac{\Q\Ds_n^1(m)}{\langle\1T,\AS,\IHX\rangle}\xrightarrow{\tilde{\iota}^1_n}\frac{\Q\Ds_n^1(m)}{\langle\1T,\AS,\IHX,\STU^2\rangle}\xrightarrow{\tilde{\iota}^2_n}\dots\xrightarrow{\tilde{\iota}^{n-1}_n}\frac{\Q\Ds_n^1(m)}{\langle\1T,\AS,\IHX,\STU^2\rangle}\cong\As^{\FI}_n(m)_{\Q}
\end{equation}
where the rightmost isomorphism follows from the fact that $\STU^2$ becomes $\4T$ for forests of size $n$ and degree $n$, i.e.\ chord diagrams.

Next, what additional relations are required for defining maps from right to left, i.e.\ sending size $k$ forests to sums of smaller forests? Starting from the \emph{ansatz} that the formula
\begin{equation}\label{eq:ansatz}
    \text{``}\pi^{k}\colon F\mapsto\frac{1}{k!}\sum_{\sigma\in\Sy_{k}}(F-(F)_{\sigma})\text{''}
\end{equation}
should work, we need to make sense of the expression ``$F-(F)_{\sigma}$''. More generally, we need to make sense of $F-F'$ for any two forests $F,F'$ made up of the same collection of trees $T_1,\dots,T_k$, but shuffled in different ways. As when working inside $\As^{\FI}(m)_{\Q}$ where $\STU$ holds, the expression $F-F'$ should equal a sum of smaller forest diagrams obtained by going from $F'$ to $F$ through a sequence of slide moves. Such a sequence of slide moves is interpreted as a \emph{path} in the \emph{graph of forests} $\Fs(T_1,\dots,T_k)$, whose vertices are the forests made up of the trees $T_1,\dots,T_k$ and whose (undirected) edges are given by \emph{slide moves}: exchanges of adjacent legs belonging to distinct trees. Such an edge is represented by ``$\vcenter{\hbox{\includegraphics[height=15pt]{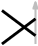}}}\leftrightarrow\vcenter{\hbox{\includegraphics[height=15pt]{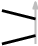}}}$'', where it is understood that both sides represent forest diagrams that are identical outside of the part shown, which involves legs of two distinct trees. To an oriented edge $F_{\times}=\vcenter{\hbox{\includegraphics[height=15pt]{Ressources/permutograph/STU2-satisfied-by-s/term-x.eps}}}\rightarrow F_{=}=\vcenter{\hbox{\includegraphics[height=15pt]{Ressources/permutograph/STU2-satisfied-by-s/term-=.eps}}}$ one can associate the forest diagram $$\overrightarrow{F_{\times}F_{=}}=\overrightarrow{\paren{\vcenter{\hbox{\includegraphics[height=25pt]{Ressources/permutograph/STU2-satisfied-by-s/term-x.eps}}}}\paren{\vcenter{\hbox{\includegraphics[height=25pt]{Ressources/permutograph/STU2-satisfied-by-s/term-=.eps}}}}}\coloneqq\vcenter{\hbox{\includegraphics[height=25pt]{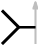}}},$$
in such a way that $F_{=}-F_{\times}=\overrightarrow{F_{\times}F_{=}}$ is an $\STU$ relation, when considering all the terms as elements of $\As^{\FI}(m)_{\Q}$. Then, to a path $P$ from $F$ to $F'$ one can associate an element $$\overrightarrow{P}\in\frac{\Q\Ds^{k-1}(m)}{\langle\1T,\AS,\IHX,\STU^2\rangle},$$ equal to the sum of forests associated to the edges traversed by $P$. In order to be able to write ``$F-F'=\overrightarrow{P}$'', we need to ensure that the expression $\overrightarrow{P}$ does not depend on the chosen path. Two different paths $P,P'$ from $F$ to $F'$ differ by a cycle $C$, and $\overrightarrow{P'}=\overrightarrow{P}+\overrightarrow{C}$. Therefore, we need to mod out the target module of forests by all $\overrightarrow{C}$, where $C$ runs over all cycles in graphs of forests of size $k$. However, this generates a large and non-explicit set of relations.

In order to obtain a smaller set of relations, we need to consider \emph{graphs of labelled forests}\footnote{The graph of labelled forests $\widetilde{\Fs}(T_1,\dots,T_k)$ differs from the graph of forests $\Fs(T_1,\dots,T_k)$ only when some trees among $T_1,\dots,T_k$ are identical, in which case a given forest has multiple possible labelings.}, denoted $\widetilde{\Fs}(T_1,\dots,T_k)$, whose vertices are forest diagrams made of $T_1,\dots,T_k$, with the difference that we remember the labeling of their trees by $1,\dots,k$. The edges are also given by slide moves. If the tree diagram $T_i$ has $n^j_i$ legs on the $j$-th strand, for $1\leq i\leq k$ and $1\leq j\leq m$, then edges of $\widetilde{\Fs}(T_1,\dots,T_k)$ correspond to basic transpositions in the product of permutation groups $\Sy_{n^1}\times\dots\times\Sy_{n^m}$ where $n^j\coloneqq\sum_{i}n^j_i$. More precisely, the edge (slide move) permuting the $i$-th and $(i+1)$-th legs (belonging to distinct trees) on the $j$-th strand corresponds to the transposition $\tau_i^j=(i, i+1)$ in the factor $\Sy_{n^j}$. Thus, a cycle $C$ in $\widetilde{\Fs}(T_1,\dots,T_k)$ corresponds to a permutation $\tau_{k_{l-1}}\dots\tau_{k_0}$ written as a product of basic transpositions. Since $C$ is a cycle, the product $\tau_{k_{l-1}}\dots\tau_{k_0}$ must equal the identity. Using the presentation
\begin{equation}\label{eq:presentation_Sn_intro}
            \Sy_{n^1}\times\dots\times\Sy_{n^m}=\left\langle\begin{matrix}\tau^{(1)}_1\dots\tau^{(1)}_{n^1-1}\\\cdots\\ \tau^{(m)}_1\dots\tau^{(m)}_{n^m-1}\end{matrix}\left|\begin{matrix}
        \tau^{(j)}_i\tau^{(j)}_i=1 & \text{for each }i,j\\
        \tau^{(j)}_i\tau^{(j)}_{i'}=\tau^{(j)}_{i'}\tau^{(j)}_i & \text{if }\abs{i-i'}>1\\
        \tau^{(j)}_{i}\tau^{(j)}_{i+1}\tau^{(j)}_{i}=\tau^{(j)}_{i+1}\tau^{(j)}_{i}\tau^{(j)}_{i+1} & \text{for each }i,j\\
        \tau^{(j)}_i\tau^{(j')}_{i'}=\tau^{(j)}_{i}\tau^{(j')}_{i'} & \text{if }j\neq j'
    \end{matrix}\right.\right\rangle
\end{equation}
of the product of groups of permutations $\Sy_{n^1}\times\dots\times\Sy_{n^m}$, we can turn $\tau_{k_{l-1}}\dots\tau_{k_0}$ into the empty word by a sequence of relations in (\ref{eq:presentation_Sn_intro}). Translating this sequence of relations turns the cycle $C$ into a composition of \emph{back-and-forth paths} ($\tau^{(j)}_i\tau^{(j)}_i=1$), squares for each \emph{far-commutativity relation} ($\tau^{(j)}_i\tau^{(j)}_{i'}=\tau^{(j)}_{i'}\tau^{(j)}_i$ when $\abs{i-i'}>1$ and $\tau^{(j)}_i\tau^{(j')}_{i'}=\tau^{(j)}_{i}\tau^{(j')}_{i'}$ when $j\neq j'$) and hexagons for each \emph{braiding relation} ($\tau^{(j)}_{i}\tau^{(j)}_{i+1}\tau^{(j)}_{i}=\tau^{(j)}_{i+1}\tau^{(j)}_{i}\tau_{i+1}$). Therefore, the element $\overrightarrow{C}\in\Q\Ds^{k-1}(m)$ is a sum of $\AS$ (Figure \ref{fig:AS_and_IHX_relations}), $\STU^2$ (\ref{eq:stu2_intro}) and $\hexagon$ relations (\ref{eq:hexagon_intro}).

The above paragraph concerns cycles in graphs of labelled forests $\widetilde{\Fs}(T_1,\dots,T_k)$, while we actually need to consider cycles in graphs of forests $\Fs(T_1,\dots,T_k)$. However, there is a surjection $\widetilde{\Fs}(T_1,\dots,T_k)\epic\Fs(T_1,\dots,T_k)$ and any path $P$ below lifts to a path $\tilde{P}$ above, after a choice of labelling $\tilde{F_0}$ of the starting point $F_0$ of $P$. Moreover, $\overrightarrow{\tilde{P}}=\overrightarrow{P}$. If $P$ is a cycle, i.e.\ $F_l=F_0$, then $\tilde{P}$ is not necessarily a cycle: even though its endpoints $\tilde{F_l}$ and $\tilde{F_0}$ have the same underlying \emph{unlabelled} forest $F_l=F_0$, their labeling may differ by a permutation $\sigma$. Lifting $P$ again, but now starting from $\tilde{F_l}$, and repeating this $N$ times, where $N$ equals the order of $\sigma$ in $\Sy_{n}$, yields a cycle in $\widetilde{\Fs}(T_1,\dots,T_k)$ which lifts the $N$-fold composition of the cycle $P$ in $\Fs(T_1,\dots,T_k)$. Thus $N\cdot\overrightarrow{C}$ is a sum of $\AS,\STU^2$ and $\hexagon$ relations as well, hence so is $\overrightarrow{C}$, since we are working over $\Q$.

Therefore, for two size $k$ forests $F$ and $F'$ made up of the same trees, we can finally define
$$\overrightarrow{F'F}\coloneqq\overrightarrow{P}\in\Fs^{\FI,k-1}(m)_{\Q}\coloneqq\frac{\Q\Ds^{k-1}(m)}{\langle\1T,\AS,\IHX,\STU^2,\hexagon R\rangle},$$
and in Section \ref{sec:merging trees} we can make sense of the \emph{ansatz} (\ref{eq:ansatz}) by defining
\begin{equation}
    \tilde{\pi}^k\colon\Q\Ds^{k}(m)\rightarrow\Fs^{\FI,k-1}(m)_{\Q}\colon F\mapsto\frac{1}{k!}\sum_{\sigma\in\Sy_{k}}\overrightarrow{(F)_{\sigma}F}.
\end{equation}
After checking that the maps $\tilde{\iota}$'s and $\tilde{\pi}$'s satisfy all the relations, the sequence of maps (\ref{eq:sequence_of_inclusions_intro}) becomes a sequence of inclusions, with the $\pi$'s as sections:
\begin{equation}\label{eq:chain_of_inclusion_modules_of_forests}
    \begin{tikzcd}
	{\Fs^{\FI,1}_n(m)_{\Q}} & {\Fs^{\FI,2}_n(m)_{\Q}} & \dots & {\Fs^{\FI,n}_n(m)_{\Q}}
	\arrow["{\iota^1_n}", hook, from=1-1, to=1-2]
	\arrow["{\iota^{2}_n}", hook, from=1-2, to=1-3]
	\arrow["{\iota^{n-1}_s}", hook, from=1-3, to=1-4]
\end{tikzcd}.
\end{equation}

In Section \ref{sec:presentation_of_primitive_filtration}, we gather the above results and identify the filtration (\ref{eq:chain_of_inclusion_modules_of_forests}) with the primitive or size filtration $F^{\bullet}\As^{\FI}(m)_{\Q}$. The leftmost module in (\ref{eq:chain_of_inclusion_modules_of_forests}) is denoted $\Ls^{\FI}_n(m)_{\Q}$, it is the degree $n$ part of the \emph{Lie algebra of trees}. The rightmost module is $\As^{\FI}_n(m)_{\Q}$, it is the degree $n$ part of the \emph{Hopf algebra of forests}\footnote{It is the well-known Hopf algebra of chord diagrams or, equivalenlty, Feynman diagrams.}. All in all, this shows that $\Ls^{\FI}(m)_{\Q}\coloneqq\bigoplus_{n\geq 1}\Ls^{\FI}_n(m)_{\Q}$ injects into $\As^{\FI}(m)_{\Q}$ and is isomorphic to the primitive Lie algebra $\Prim(\As^{\FI}(m)_{\Q})$. Since the $\hexagon$ relation requires at least two trees, it is trivial in $\Ls^{\FI}(m)_{\Q}$. We finally obtain, as desired, $$\Prim(\As^{\FI}(m)_{\Q})\cong\frac{\Q\Ds^T(m)}{\langle\1T,\AS,\IHX,\STU^2\rangle}.$$

\subsection{Algebras of diagrams}\label{sec:reminder_algebra_of_diagrams}
This section recalls the definition of chord and Feynman diagrams \cite{bar1995vassiliev} on tangle skeletons, as well as their bialgebra structure.

\subsubsection{Uni-trivalent graphs.}
A \emph{uni-trivalent graph}, also called \emph{Feynman} or \emph{Jacobi graph}, is an undirected graph $G=(V=U\sqcup T,E)$ consisting of univalent and trivalent vertices (forming the sets $U$ and $T$, respectively). The edges are defined as pairs of distinct vertices\footnote{We could allow diagrams with edges attached to a single vertex, i.e.\ \emph{tadpoles}, but those will vanish by $\AS$.}. The trivalent vertices are endowed with a cyclic orientation of their three adjacent edges. When a uni-trivalent graph is drawn, the cyclic orientations are the counterclockwise ones unless indicated otherwise.

The univalent and trivalent vertices are often called \emph{leaves} and \emph{nodes}, respectively. An edge adjacent to a leaf is called a \emph{leg}.

The \emph{degree} of a uni-trivalent graph $G$ is $\deg(G)\coloneqq\frac{\abs{V}}{2}$.

A uni-trivalent graph $G$ is a \emph{forest} if it has no cycles and a \emph{tree} if in addition it has a single component. Note that, if $G$ is a tree, then $\deg(G)=\abs{U}-1=\abs{T}+1$.

\subsubsection{Uni-trivalent forest and chord diagrams.}\label{sec:Uni-trivalent, forest and chord diagrams}
Let $S$ be an oriented $1$-manifold with boundary. Later $S$ will be the underlying manifold of what will be referred to as a \emph{tangle skeleton}. Since this work focuses on string links, $S$ will usually be a disjoint union of intervals $\{1,\dots,m\}\times [0,1]$.

A \emph{uni-trivalent} or \emph{Feynman} or \emph{Jacobi diagram} on $S$ is a pair $D=(G,[u])$ where $G=(U\sqcup T,E)$ is a uni-trivalent graph and $[u]$ is the isotopy class of an embedding $u\colon U\monic S-\del S$ of the univalent vertices into the interior of $S$. The set of uni-trivalent diagrams on $S$ is denoted $\Ds(S)$.

The \emph{degree} of a uni-trivalent diagram is the degree of its underlying graph. Denote by $\Ds_n(S)\subset\Ds(S)$ the subset of degree $n$ uni-trivalent diagrams. This endows $\Ds(S)$ with a graded set structure.

A uni-trivalent diagram is a \emph{forest}, resp. \emph{tree}, diagram if its underlying graph is a forest, resp. a tree. A forest diagram is a \emph{chord diagram} if it has no nodes. Denote by $\Ds^F(S),\Ds^T(S),\Ds^c(S)$ the corresponding graded subsets of $\Ds(S)$. The \emph{size} of a forest diagram is its number of trees, i.e.\ the number $b_0(G)$ of connected components of the underlying uni-trivalent graph $G$. For example, the forest diagram on Figure \ref{fig:Forest_diagram} has size $3$. For $s\geq 0$, the set of forest diagrams of size $s$ is denoted $\Ds^s(S)$. Note that $\Ds^{n}_n(S)=\Ds^c_n(S)$ and $\Ds^T(m)\coloneqq\Ds^1(m)$ is the set of tree diagrams.

\begin{figure}[!ht]
    \centering
     \begin{subfigure}[b]{0.3\textwidth}
         \centering
         \includegraphics[scale=1]{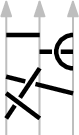}
         \caption{Chord diagram}
         \label{fig:chord_diagram}
     \end{subfigure}
     \hfill
     \begin{subfigure}[b]{0.3\textwidth}
         \centering
         \includegraphics[scale=1]{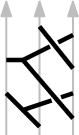}
         \caption{Forest diagram}
         \label{fig:Forest_diagram}
     \end{subfigure}
     \hfill
     \begin{subfigure}[b]{0.3\textwidth}
         \centering
         \includegraphics[scale=1]{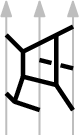}
         \caption{Feynman diagram}
         \label{fig:Feynman_diagram}
     \end{subfigure}
    \caption{Examples of diagrams}
    \label{fig:examples_of_diagram}
\end{figure}

By \cite{bar1995vassiliev}, the inclusions $\Ds^c(S)\subset\Ds^F(S)\subset\Ds(S)$ induce isomorphisms\footnote{In \cite{bar1995vassiliev}, it is shown that $\Ds^c(S)\monic\Ds(S)$ induces an isomorphism $\Z\Ds^c(S)/\langle\4T\rangle\cong\Z\Ds(S)/\langle\STU\rangle$. Thus, we get $\Z\Ds^c(S)/\langle\4T\rangle\monic\Z\Ds^F(S)/\langle\STU\rangle\epic\Z\Ds(S)/\langle\STU\rangle$ and the first map is surjective since any forest can be written as a sum of chord diagrams by applying $\STU$ until it has no node left. Hence the first map is an isomorphism, so is the second since their composition is an isomorphism. The same holds with $\FI$ added everywhere.} of graded $\Z$-modules
\begin{equation}\label{eq:definition_algebra_diagrams}
    \As(S)\coloneqq\frac{\Z\Ds^c(S)}{\langle\4T\rangle}\cong\frac{\Z\Ds^F(S)}{\langle\STU\rangle}\cong\frac{\Z\Ds(S)}{\langle\STU\rangle}
\end{equation}
and the relations $\AS,\IHX$ are consequences of $\STU$ (see Figures \ref{fig:4T,STU,1T-relations} and \ref{fig:AS_and_IHX_relations} for descriptions of all the involved relations). Unless stated otherwise, we use the presentation of $\As(S)$ using forest diagram. The degree $n$ part of $\As(S)$ is denoted $\As_n(S)$. The only degree $0$ diagram is the empty one, hence $\As_0(S)\cong\Z$.

\subsubsection{Framed versus unframed.}
There is an \emph{unframed} or \emph{framing independent} ($\FI$) version of the module of Feynman diagrams
\begin{equation}\label{eq:unframed_algebra}
    \As^{\FI}(S)\coloneqq\As(S)/\langle\1T\rangle,
\end{equation}
where $\1T$ is the submodule generated by all diagrams containing an \emph{isolated chord}, i.e.\ a chord whose endpoints lie on the same strand with no other endpoint in between them. 

Writing $A^{(FI)}(S)$ indicates that the statement holds for both $A(S)$ and $A^{FI}(S)$.

\subsubsection{Coalgebra structure.}
The graded $\Z$-module $\As^{(\FI)}(S)$ has a structure of coalgebra, with comultiplication \cite[Definition 3.7]{bar1995vassiliev}
\begin{equation}
    \Delta(D)\coloneqq\sum_{J\subset\pi_0(D)} D_{J}\otimes D_{\pi_0(D)-J},
\end{equation}
where the sum runs over all the subsets $J$ of the set $\pi_0(D)\coloneqq\pi_0(G)$ of connected components of $D$ and $D_{J}$ denotes the subdiagram of $D$ consisting of those components contained in $J$. The counit $\epsilon$ is given by $\epsilon(D)=1$ if $D$ is the empty diagram, and it is zero otherwise.

For example,
\begin{align*}
    \Delta\paren{\vcenter{\hbox{\includegraphics[height=25pt]{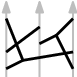}}}}=&\vcenter{\hbox{\includegraphics[height=25pt]{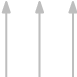}}}\otimes\vcenter{\hbox{\includegraphics[height=25pt]{Ressources/Delta/123.eps}}}\;+\;\vcenter{\hbox{\includegraphics[height=25pt]{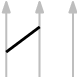}}}\otimes\vcenter{\hbox{\includegraphics[height=25pt]{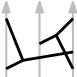}}}\;+\;\vcenter{\hbox{\includegraphics[height=25pt]{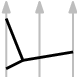}}}\otimes\vcenter{\hbox{\includegraphics[height=25pt]{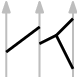}}}\;+\;\vcenter{\hbox{\includegraphics[height=25pt]{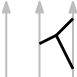}}}\otimes\vcenter{\hbox{\includegraphics[height=25pt]{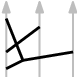}}}\\ &\;+\;\vcenter{\hbox{\includegraphics[height=25pt]{Ressources/Delta/12.eps}}}\otimes\vcenter{\hbox{\includegraphics[height=25pt]{Ressources/Delta/3.eps}}}\;+\;\vcenter{\hbox{\includegraphics[height=25pt]{Ressources/Delta/13.eps}}}\otimes\vcenter{\hbox{\includegraphics[height=25pt]{Ressources/Delta/2.eps}}}\;+\;\vcenter{\hbox{\includegraphics[height=25pt]{Ressources/Delta/23.eps}}}\otimes\vcenter{\hbox{\includegraphics[height=25pt]{Ressources/Delta/1.eps}}}\;+\;\vcenter{\hbox{\includegraphics[height=25pt]{Ressources/Delta/123.eps}}}\otimes\vcenter{\hbox{\includegraphics[height=25pt]{Ressources/Delta/empty.eps}}}.
\end{align*}

Note that the comultiplication is co-associative and co-commutative.

\subsubsection{Hopf algebra of diagrams on string links.}
When $S=\{1,\dots,m\}\times [0,1]$ is a disjoint union of intervals, we write $\Ds(m)$ for $\Ds(S)$, $\As(m)$ for $\As(S)$, and so on. In that case, vertical stacking of diagrams additionaly endows $\As(m)$ and $\As^{\FI}(m)$ with an algebra structure. More precisely, $D\cdot D'$ denotes the vertical concatenation of $D$ and $D'$, with $D$ above $D'$. The unit $1$ is the empty diagram.

\begin{proposition}[Bar-Natan {\cite[Proposition 3.9]{bar1995vassiliev}}]
    The comultiplication and multiplication defined above endow $\As^{(\FI)}(m)$ with the structure of a connected co-commutative Hopf algebra\footnote{Bar-Natan does not explicit the existence of an antipodal over $\Z$, but one can check that we can define it similarly to the antipodal in the \emph{Connes-Kreimer} Hopf algebra.}. It is commutative when $m=1$.
\end{proposition}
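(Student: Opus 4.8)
The plan is to verify all the axioms at the level of forest diagrams, check that the multiplication and comultiplication descend to the quotient $\As^{(\FI)}(m)=\Z\Ds^F(m)/\langle\STU,\1T\rangle$, and then obtain the antipode for free from connectedness of the grading. Associativity of vertical stacking and the fact that the empty diagram is a two-sided unit are immediate from the definition of concatenation of $1$-manifolds; and since $\STU$ and $\1T$ are \emph{local} relations (finitely many diagrams agreeing outside a small disk on the skeleton), stacking such a relation in one factor against an arbitrary diagram in the other produces a relation of the same type, so the multiplication descends. It therefore remains to (a) check that $\Delta$ descends, (b) verify the coalgebra axioms and the compatibility of $\Delta,\epsilon$ with the product on diagrams, and (c) produce the antipode.

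For (a), the crucial observation is that in the forest presentation every $\STU$ relation $T-U=S$ has its two active legs lying in two \emph{distinct} tree-components $A\neq B$ of $T$ (and of $U$), so that $S$, formed by merging $A$ and $B$ through the new trivalent vertex, is again a forest with $\pi_0(S)=\bigl(\pi_0(T)\setminus\{A,B\}\bigr)\cup\{AB\}$. I would expand $\Delta(T)-\Delta(U)-\Delta(S)$ as a sum over subsets $J$ and split the index sets according to the position of $A,B$ (resp.\ $AB$) relative to the coproduct cut. The terms in which $A$ and $B$ land on opposite sides of the cut cancel between $\Delta(T)$ and $\Delta(U)$: on each side only one of the two legs survives, attached at one of two \emph{adjacent} skeleton points with nothing between them, so the two resulting diagrams are isotopic. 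The terms in which $A,B$ (resp.\ $AB$) both land on the same side match up via the bijection $J'\ni AB \leftrightarrow J\supseteq\{A,B\}$, and for matched indices $T_J-U_J-S_{J'}$ is itself a (forest) $\STU$ relation tensored with a fixed diagram $T_{J^c}=U_{J^c}=S_{J'^c}$ on the other side. Hence $\Delta(T-U-S)\in\langle\STU\rangle\otimes\As+\As\otimes\langle\STU\rangle$. The analogous statement for $\1T$ is easier: an isolated chord stays isolated after discarding other components, so $\Delta$ of a $\1T$-diagram lies in $\langle\1T\rangle\otimes\As+\As\otimes\langle\1T\rangle$. Thus $\Delta$, and trivially $\epsilon$, descend to $\As^{(\FI)}(m)$.

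For (b), I would reduce everything to combinatorics of $\pi_0$. Coassociativity and counitality follow because the $r$-fold iterated coproduct of $D$ equals $\sum D_{J_1}\otimes\cdots\otimes D_{J_r}$ summed over ordered partitions $\pi_0(D)=J_1\sqcup\cdots\sqcup J_r$, and cocommutativity from the involution $J\leftrightarrow\pi_0(D)\setminus J$ (this last point is already noted in the text). That $\Delta$ and $\epsilon$ are algebra homomorphisms uses $\pi_0(D\cdot D')=\pi_0(D)\sqcup\pi_0(D')$ (stacking creates no new incidences between components) together with $(D\cdot D')_{J_1\sqcup J_2}=D_{J_1}\cdot D'_{J_2}$, which makes the coproduct sum factor as $\Delta(D)\cdot\Delta(D')$ (with the sign-free product on $\As\otimes\As$, since the grading here carries no Koszul signs). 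All of these operations preserve the degree grading, since $\abs{V}$ is additive under stacking and under splitting, and $\As^{(\FI)}_0(m)\cong\Z$; so $\As^{(\FI)}(m)$ is a connected $\N$-graded bialgebra.

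For (c), I would invoke the standard fact that a connected $\N$-graded bialgebra over a commutative ring is automatically a Hopf algebra: define $\mathcal{S}$ by $\mathcal{S}(1)=1$ and $\mathcal{S}(x)=-x-\sum\mathcal{S}(x'_{(1)})x'_{(2)}$, where $\Delta(x)=1\otimes x+x\otimes 1+\sum x'_{(1)}\otimes x'_{(2)}$ with the primed terms of strictly lower degree; induction on degree shows $m(\mathcal{S}\otimes\id)\Delta=\eta\epsilon=m(\id\otimes\mathcal{S})\Delta$. This is the Connes--Kreimer-style recursion referred to in the footnote. Finally, commutativity when $m=1$ reduces, via the closure isomorphism $\As^{(\FI)}(1)\cong\As^{(\FI)}(S^1)$, to the classical commutativity of the algebra of chord diagrams on the circle, where a single chord is slid around past a given diagram and the corrections generated each time it passes another chord endpoint telescope to zero by $\4T$; I would simply cite this. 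The main obstacle in the whole argument is step (a) — specifically, matching the coproduct terms of the three diagrams in an $\STU$ relation and checking that the opposite-side terms cancel; everything else is either immediate or a standard structural fact.
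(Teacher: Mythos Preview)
The paper does not prove this proposition; it is stated with a citation to Bar-Natan and no proof is given. Your proposal is therefore not a comparison target but a self-contained verification, and it is essentially correct.

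A few minor remarks. Your key step~(a), showing that $\Delta$ respects $\STU$ in the forest presentation, is the only place requiring real care, and your argument is sound: in $\Z\Ds^F(m)/\langle\STU\rangle$ the only $\STU$ relations present are those whose $S$-term is a forest, which forces the two active legs of $T$ and $U$ to lie in distinct components, and your case split (same side versus opposite sides of the cut) handles the cancellation correctly. For the commutativity at $m=1$, the cleanest route is indeed the module isomorphism $\As^{(\FI)}(1)\cong\As^{(\FI)}(S^1)$ followed by rotation of the circle; your parenthetical ``slide a chord and telescope by $\4T$'' sketch is a different and less direct argument, so I would simply cite Bar-Natan's Theorem~6 (or Lemma~3.1) rather than mix the two. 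The antipode recursion in~(c) is exactly what the footnote points to and is standard for connected graded bialgebras over any commutative ring.
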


\subsection{Primitive filtration of forest diagrams}\label{sec:Primitive filtration of forest diagrams}
\subsubsection{Primitive filtration of a Hopf algebra.}
Let us recall some basic notions about primitive elements in Hopf algebras. See \cite{milnor1965structure}, \cite[Chapter 5]{montgomery1993hopf}, \cite{cartier2007primer} and \cite{ronco2011shuffle} for more details.

Fix a Hopf algebra $A$ over $\Q$ with multiplication $\mu$ or $\cdot$, unit $1$, comultiplication $\Delta$ and counit $\epsilon$. An element $x\in A$ is \emph{primitive} if $\Delta(x)=x\otimes 1+1\otimes x$. A direct calculation shows that the commutator of two primitive elements is primitive as well. In particular, the set $\Prim(A)$ of primitive elements in $A$ is a Lie algebra, called the \emph{primitive Lie algebra of $A$}. Conversely, to any Lie algebra $L$ one can associate its \emph{universal enveloping algebra} $U(L)$, which is a Hopf algebra. The Milnor--Moore theorem states conditions under which the functors $\Prim$ and $U$ are inverses of each other.

\begin{theorem}[Milnor--Moore theorem {\cite[Theorem 5.6.5]{montgomery1993hopf}}]
    Let $A$ be a connected\footnote{A Hopf algebra $A$ is \emph{connected} if $A_0\coloneqq\Q\cdot 1$ is its unique simple subcoalgebra.} and co-commutative Hopf algebra over a characteristic zero field, then the inclusion $\Prim(A)\monic A$ induces an isomorphism of Hopf algebras $U(\Prim(A))\cong A$.
\end{theorem}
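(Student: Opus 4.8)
The plan is to produce the canonical Hopf algebra morphism $\phi\colon U(\Prim(A))\to A$ and show it is bijective, with connectedness, co-commutativity and $\chark=0$ each entering at a definite point. Write $L\coloneqq\Prim(A)$, a Lie subalgebra of $A$ under the commutator. The inclusion $L\monic A$ is a Lie algebra homomorphism, so the universal property of the enveloping algebra yields a unique algebra homomorphism $\phi\colon U(L)\to A$ extending it. Since $L$ generates $U(L)$ as an algebra and consists of primitive elements mapping to primitive elements, and since $\Delta,\epsilon$ are algebra maps on both sides, $\phi$ is automatically compatible with the comultiplications and counits, hence is a morphism of Hopf algebras. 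It remains to prove that $\phi$ is injective and surjective.

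For injectivity, observe that both $U(L)$ and $A$ are connected coalgebras and carry their coradical filtrations $\{C_n\}_{n\geq 0}$, which are exhaustive, with $C_0=\Q\cdot 1$ and $C_1=\Q\cdot 1\oplus\Prim$. In characteristic zero the Poincaré--Birkhoff--Witt theorem gives $\Prim(U(L))=L$, so $\phi$ restricts to the identity map $L\to L$ on primitives; in particular $\phi$ is injective on $\Prim(U(L))$. One then invokes the general fact that a morphism of connected coalgebras which is injective on primitives is injective: arguing by induction on $n$, if $x\in\ker\phi$ lies in $C_n$ with $n\geq 1$ minimal, then $\overline{\Delta}(x)\coloneqq\Delta(x)-x\otimes 1-1\otimes x$ lies in $C_{n-1}\otimes C_{n-1}$ and is annihilated by $\phi\otimes\phi$; by the inductive hypothesis $\phi$ is injective on $C_{n-1}$, so $\overline{\Delta}(x)=0$, i.e.\ $x$ is primitive, which forces $x=0$.

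For surjectivity it suffices to show that $A$ is generated as an algebra by $L=\Prim(A)$, because then $\ima(\phi)$ is the subalgebra generated by $\phi(L)=L$, which is all of $A$; together with the previous paragraph this makes $\phi$ an isomorphism of Hopf algebras. This primitive-generation statement is the heart of the theorem. One proves it by induction on the coradical degree: given $x\in A_n$ with $n\geq 2$, the reduced coproduct $\overline{\Delta}(x)$ lies in $A_{n-1}\otimes A_{n-1}$, is symmetric by co-commutativity, and behaves like the image of a "differential"; in characteristic zero one can integrate it (dividing by factorials, via the Eulerian/Dynkin idempotents, or equivalently via Cartier's structure theorem for connected co-commutative Hopf algebras) to produce a polynomial $p$ in primitives with $x-p\in A_{n-1}$, at which point the induction closes.

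I expect this last step — primitive generation of $A$ — to be the main obstacle, and it is where both hypotheses are genuinely needed: without co-commutativity $A$ need not be primitively generated at all (e.g.\ duals of suitable connected non-cocommutative Hopf algebras), and without $\chark=0$ the divisions and idempotent constructions used to split primitives off the coradical filtration, as well as the identity $\Prim(U(L))=L$, both fail. Steps 1 and 2 are formal; the content of Milnor--Moore is concentrated in Step 3.
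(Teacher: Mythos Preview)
The paper does not prove the Milnor--Moore theorem; it is quoted from \cite[Theorem 5.6.5]{montgomery1993hopf} and used as a black box, so there is no ``paper's own proof'' to compare against. Your outline follows the standard strategy found in Montgomery or Milnor--Moore: build $\phi\colon U(L)\to A$ from the universal property, prove injectivity via the coradical filtration and $\Prim(U(L))=L$, and prove surjectivity by showing $A$ is primitively generated.

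That said, your Step~3 is not a proof but a pointer to one. Writing ``in characteristic zero one can integrate it (dividing by factorials, via the Eulerian/Dynkin idempotents, or equivalently via Cartier's structure theorem\ldots)'' is precisely the content of the theorem you are trying to prove; invoking Cartier's structure theorem here is circular, since that theorem is essentially a restatement of Milnor--Moore. If you want an honest argument, you need to actually construct the projection onto primitives: for instance, show that the symmetrization map $\Sym(L)\to U(L)$ is a coalgebra isomorphism in characteristic zero (PBW), transport the obvious primitive projection from $\Sym(L)$, and then use that $\phi$ is an isomorphism on the associated graded for the coradical filtration. Alternatively, carry out the induction you sketch explicitly: given $x$ with $\overline{\Delta}(x)=\sum x'_i\otimes x''_i$ symmetric with $x'_i,x''_i$ in lower coradical degree, exhibit concretely the element $p$ built from products of primitives with $\overline{\Delta}(p)=\overline{\Delta}(x)$. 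As written, Steps~1 and~2 are fine and your diagnosis of where the hypotheses enter is correct, but Step~3 is a restatement of the goal rather than an argument.
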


Now, let us assume that $A$ is a connected and co-commutative Hopf algebra over $\Q$, so that $U(\Prim(A))\cong A$. The \emph{primitive filtration} of $A$ is the filtration
\begin{equation}\label{eq:primitive_filtration}
    \Q\cdot 1=P^0A\monic P^1A\monic P^2A\monic\dots
\end{equation}
where $P^kA$ is the $\Q$-submodule of $A$ additively generated by $\Q\cdot 1$ and all products of $\leq k$-many primitive elements. In particular $P^1A=\Q\oplus\Prim(A)$ and any element $x\in A$ is in some $P^k A$, for some $k$, since $A$ is primitively generated. Under the isomorphism $U(\Prim(A))\cong A$, the primitive filtration coincides with the \emph{Lie filtration} of $U(\Prim(A))$ \cite[Prop.\ 5.17]{milnor1965structure}.

For all $k\geq 0$, we have $P^kA=\Q\oplus P^kA_{\geq 1}$ where $A_{\geq 1}\coloneqq A_+$ is the kernel of the counit. The filtration
\begin{equation}\label{eq:reduced_primitive_filtration}
    0=P^0A_{\geq 1}\monic P^1A_{\geq 1}\monic P^2A_{\geq 1}\monic\dots
\end{equation}
is called the \emph{reduced primitive filtration} of $A_{\geq 1}$.

\subsubsection{Primitive filtration of forests.}
The focus returns to the Hopf algebra of forest diagrams on $m$ strands, $\As^{(\FI)}(m)_{\Q}$, which is indeed connected and co-commutative, hence the Milnor--Moore theorem applies, i.e.\ $U(\Prim(\As(m)_{\Q}))\cong\As(m)_{\Q}$, and it is filtered by the primitive filtration.

In the case $m=1$, Bar-Natan showed that $\Prim(\As(1)_{\Q})$ is (additively) generated by all connected diagrams \cite{bar1995vassiliev} and his argument translates well for $m\geq 1$. Actually, this statement is also true over $\Z$, as shown by Lando in \cite{lando2primitive} via a beautiful counting argument. Unfortunately, the argument uses commutativity in $\As(1)$, hence it does not translate directly to the general case $m\geq 1$. Instead, we give an argument that compares the primitive filtration to the size filtration defined below, in a way parallel to the ideas of the subsequent subsections in this section. First, let us introduce some notations.

\begin{definition}\label{def:size_filtration}
    The \emph{size filtration} of $\As(m)$ is the filtration $\{F^k\As(m)\}_{k\geq 0}$ where $F^k\As(m)$ is the $\Z$-submodule generated by forest diagrams of size $\leq k$. In degree $n\geq 0$, it induces a filtration\footnote{The only forest diagram of size $0$ is the empty diagram $1$, hence $F^0\As(m)=\Z\cdot 1$.}
    \begin{equation}
        F^0\As_n(m)\monic F^1\As_n(m)\monic\dots\monic F^{n-1}\As_n(m)\monic F^{n}\As_n(m)=\As_n(m)
    \end{equation}
    of $\As_n(m)$. The \emph{size filtration} of $\As^{\FI}(m)$ is defined in the same way, with $\FI$ added everywhere. Tensoring everything with $\Q$ yields the size filtration of $\As(m)_{\Q}$.
\end{definition}

For $F\in\Ds^F(m)$ a forest diagram, define the \emph{reduced co-multiplication} $\overline{\Delta}(F)$ as
\begin{equation}
    \overline{\Delta}(F)\coloneqq\sum_{\emptyset\neq J\subsetneq\pi_0(F)}\paren{F_J}\otimes\paren{F_{\pi_0(F)-J}}
\end{equation}
where $F_J$ denotes the subdiagram of $F$ that consists of the components contained in $J$. In other words, $\overline{\Delta}(F)=\Delta(F)-F\otimes 1-1\otimes F$. Thus, $\overline{\Delta}$ satisfies $\AS,\IHX,\STU$ (and $\1T$) and descends to a well-defined map
\begin{equation}
    \overline{\Delta}\colon\As_{\geq 1}(m)\longrightarrow\As_{\geq 1}(m)\otimes\As_{\geq 1}(m).
\end{equation}
Define recursively $\overline{\Delta}^{k}\coloneqq (\overline{\Delta}^{k-1}\otimes\id)\circ\overline{\Delta}\colon \As_{\geq 1}(m)\longrightarrow(\As_{\geq 1}(m))^{\otimes k+1}$, for $k>1$. Concretely, we have
\begin{equation}\label{eq:description_k_fold_reduced_comultiplication}
    \overline{\Delta}^k(F)=\sum_{\substack{J_0\sqcup\dots\sqcup J_{k}=\pi_0(F)\\ J_i\neq \emptyset}}\paren{F_{J_0}}\otimes\dots\otimes\paren{F_{J_{k}}},
\end{equation}
where the sum is over all possible partitions of the components of $F$ into $k+1$ nonempty subforests.

For example, we have
\begin{align*}
    \overline{\Delta}\paren{\vcenter{\hbox{\includegraphics[height=25pt]{Ressources/Delta/123.eps}}}}=&\vcenter{\hbox{\includegraphics[height=25pt]{Ressources/Delta/1.eps}}}\otimes\vcenter{\hbox{\includegraphics[height=25pt]{Ressources/Delta/23.eps}}}\;+\;\vcenter{\hbox{\includegraphics[height=25pt]{Ressources/Delta/2.eps}}}\otimes\vcenter{\hbox{\includegraphics[height=25pt]{Ressources/Delta/13.eps}}}\;+\;\vcenter{\hbox{\includegraphics[height=25pt]{Ressources/Delta/3.eps}}}\otimes\vcenter{\hbox{\includegraphics[height=25pt]{Ressources/Delta/12.eps}}}\\ &\;+\;\vcenter{\hbox{\includegraphics[height=25pt]{Ressources/Delta/12.eps}}}\otimes\vcenter{\hbox{\includegraphics[height=25pt]{Ressources/Delta/3.eps}}}\;+\;\vcenter{\hbox{\includegraphics[height=25pt]{Ressources/Delta/13.eps}}}\otimes\vcenter{\hbox{\includegraphics[height=25pt]{Ressources/Delta/2.eps}}}\;+\;\vcenter{\hbox{\includegraphics[height=25pt]{Ressources/Delta/23.eps}}}\otimes\vcenter{\hbox{\includegraphics[height=25pt]{Ressources/Delta/1.eps}}}
\end{align*}
and
\begin{align*}
    \overline{\Delta}^2\paren{\vcenter{\hbox{\includegraphics[height=25pt]{Ressources/Delta/123.eps}}}}=&\vcenter{\hbox{\includegraphics[height=25pt]{Ressources/Delta/1.eps}}}\otimes\vcenter{\hbox{\includegraphics[height=25pt]{Ressources/Delta/2.eps}}}\otimes\vcenter{\hbox{\includegraphics[height=25pt]{Ressources/Delta/3.eps}}}\;+\;\vcenter{\hbox{\includegraphics[height=25pt]{Ressources/Delta/1.eps}}}\otimes\vcenter{\hbox{\includegraphics[height=25pt]{Ressources/Delta/3.eps}}}\otimes\vcenter{\hbox{\includegraphics[height=25pt]{Ressources/Delta/2.eps}}}\;+\;\vcenter{\hbox{\includegraphics[height=25pt]{Ressources/Delta/2.eps}}}\otimes\vcenter{\hbox{\includegraphics[height=25pt]{Ressources/Delta/1.eps}}}\otimes\vcenter{\hbox{\includegraphics[height=25pt]{Ressources/Delta/3.eps}}}\\
    &+\;\vcenter{\hbox{\includegraphics[height=25pt]{Ressources/Delta/3.eps}}}\otimes\vcenter{\hbox{\includegraphics[height=25pt]{Ressources/Delta/2.eps}}}\otimes\vcenter{\hbox{\includegraphics[height=25pt]{Ressources/Delta/1.eps}}}\;+\;\vcenter{\hbox{\includegraphics[height=25pt]{Ressources/Delta/2.eps}}}\otimes\vcenter{\hbox{\includegraphics[height=25pt]{Ressources/Delta/3.eps}}}\otimes\vcenter{\hbox{\includegraphics[height=25pt]{Ressources/Delta/1.eps}}}\;+\;\vcenter{\hbox{\includegraphics[height=25pt]{Ressources/Delta/3.eps}}}\otimes\vcenter{\hbox{\includegraphics[height=25pt]{Ressources/Delta/1.eps}}}\otimes\vcenter{\hbox{\includegraphics[height=25pt]{Ressources/Delta/2.eps}}}.
\end{align*}

Note also that the reduced primitive filtration of $\As_{\geq 1}(m)_{\Q}$ can already be defined over $\Z$: for $k\geq 0$, $P^k\As_{\geq 1}(m)$ is generated by all products of $\leq k$-many primitive elements.

\begin{lemma}\label{lem:properties_of_reduced_comultiplication}
    We have the following properties:
    \begin{enumerate}[label={\upshape(\roman*)}]
    \item For all $x,y\in\As_{\geq 1}(m)$,
    \begin{equation}\label{eq:reduced_comult_of_product}
        \overline{\Delta}(x\cdot y)=\overline{\Delta}(x)\cdot\overline{\Delta}(y)+\overline{\Delta}(x)\cdot(1\otimes y+y\otimes 1)+(1\otimes x+x\otimes 1)\cdot \overline{\Delta}(y)+x\otimes y+y\otimes x;
    \end{equation}
    \item $\Prim(\As(m))=\ker(\overline{\Delta})$;
    \item $\overline{\Delta}(P^{k}\As_{\geq 1}(m))\subset\sum_{i=1}^{k-1}P^{i}\As_{\geq 1}(m)\otimes P^{k-i}\As_{\geq 1}(m)$;
    \item $P^k\As_{\geq_1}(m)\subset\ker(\overline{\Delta}^k)$;
    \item $F^k\As_{\geq_1}(m)\subset\ker(\overline{\Delta}^k)$, i.e.\ $\overline{\Delta}^k$ vanishes on forests of size $\leq k$.
    \end{enumerate}
    All of the above properties remain true after passing to coefficients in $\Q$.
\end{lemma}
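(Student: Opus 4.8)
The five statements are essentially formal consequences of the definition of $\overline{\Delta}$ and $\overline{\Delta}^k$ via (\ref{eq:description_k_fold_reduced_comultiplication}), together with the Milnor--Moore / Leibniz structure of the Hopf algebra. The plan is to prove them in the order $(i) \Rightarrow (ii) \Rightarrow (iii) \Rightarrow (iv)$, and to prove $(v)$ directly from the combinatorial description; none of the arguments requires characteristic zero, so the final sentence about passing to $\Q$ is immediate.

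For $(i)$, I would expand $\Delta(x\cdot y)=\Delta(x)\cdot\Delta(y)$ using $\Delta(x)=\overline{\Delta}(x)+x\otimes 1+1\otimes x$ (and similarly for $y$), multiply out the nine resulting terms, and then subtract $x\cdot y\otimes 1 + 1\otimes x\cdot y$ to pass from $\Delta$ to $\overline{\Delta}$ on the left-hand side. Collecting terms gives exactly the claimed formula; this is a routine bookkeeping computation using only that $\Delta$ is an algebra map. For $(ii)$: $x$ is primitive iff $\Delta(x)=x\otimes 1+1\otimes x$ iff $\overline{\Delta}(x)=0$, by the very definition $\overline{\Delta}(x)=\Delta(x)-x\otimes 1-1\otimes x$, so this is a tautology. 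For $(iii)$, I would argue by induction on $k$: a generator of $P^k\As_{\geq 1}(m)$ is a product $z_1\cdots z_k$ of primitives, equivalently $x\cdot y$ with $x\in\Prim$ and $y\in P^{k-1}$; apply $(i)$, noting $\overline{\Delta}(x)=0$ by $(ii)$, so the formula collapses to $\overline{\Delta}(x\cdot y) = (1\otimes x + x\otimes 1)\cdot\overline{\Delta}(y) + x\otimes y + y\otimes x$, and each term visibly lies in $\sum_{i=1}^{k-1} P^i\otimes P^{k-i}$ once one feeds in the inductive hypothesis $\overline{\Delta}(y)\in\sum_{i=1}^{k-2}P^i\otimes P^{k-1-i}$ and uses that multiplication by a primitive raises primitive-filtration degree by at most one (which itself follows from the definition of $P^\bullet$). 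Then $(iv)$ follows from $(iii)$ by another induction on $k$: $\overline{\Delta}^k = (\overline{\Delta}^{k-1}\otimes\id)\circ\overline{\Delta}$, and on $P^k\As_{\geq 1}(m)$ the image of $\overline{\Delta}$ lands in $\sum_i P^i\otimes P^{k-i}$ with $i\leq k-1$, so $\overline{\Delta}^{k-1}$ kills the first tensor factor by the inductive hypothesis (applied to each $P^i$ with $i\leq k-1$).

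Statement $(v)$ I would prove directly and independently: by (\ref{eq:description_k_fold_reduced_comultiplication}), $\overline{\Delta}^k(F)$ is a sum over ordered partitions of $\pi_0(F)$ into $k+1$ \emph{nonempty} blocks; if $F$ has size $\leq k$, i.e.\ $|\pi_0(F)|\leq k < k+1$, there are no such partitions, so the sum is empty and $\overline{\Delta}^k(F)=0$. Since the $F^k\As_{\geq 1}(m)$ are additively generated by such forests, this gives $F^k\As_{\geq 1}(m)\subset\ker(\overline{\Delta}^k)$. Finally, the very last clause — "all of the above remain true over $\Q$" — needs only the observation that $\overline{\Delta}$, $\overline{\Delta}^k$, the filtrations $P^\bullet$ and $F^\bullet$, and the set of primitives are all defined by $\Z$-linear (indeed integral) formulas and base-change along $\Z\to\Q$ is exact, so every inclusion of submodules is preserved.

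The only mildly delicate point — the closest thing to an obstacle — is making $(iii)$ genuinely rigorous: one must be careful that "product of $\leq k$ primitives" is closed under the manipulations used, and that multiplication $P^i\otimes P^j \to P^{i+j}$ behaves as expected with respect to the filtration, so that the terms produced by the Leibniz-type formula in $(i)$ really do land in $\sum_{i=1}^{k-1}P^i\otimes P^{k-i}$ rather than just in some $P^{\leq k}\otimes P^{\leq k}$. This is handled by choosing the inductive presentation $x\cdot y$ with $x$ a single primitive (rather than an arbitrary split), so that $\overline{\Delta}(x)=0$ and the bookkeeping stays linear in the filtration degree; everything else is formal.
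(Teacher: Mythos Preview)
Your proposal is correct and follows essentially the same approach as the paper: (i) by expanding $\Delta(xy)=\Delta(x)\Delta(y)$, (ii) as a tautology, (iii) by induction on $k$ via the Leibniz-type formula from (i) applied to a product of a primitive with an element of lower filtration, (iv) by induction using (iii), and (v) directly from the combinatorial description of $\overline{\Delta}^k$ as a sum over nonempty ordered partitions. The only cosmetic difference is that the paper factors a generator of $P^{k+1}$ as $x\cdot y$ with $x\in P^k$ and $y$ primitive (primitive on the right), whereas you put the primitive on the left; this is immaterial.
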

\begin{proof}
    Part (i) follows from the definition of $\overline{\Delta}$ and compatibility $\Delta(x\cdot y)=\Delta(x)\cdot\Delta(y)$. Part (ii) is immediate. Part (v) follows from the description (\ref{eq:description_k_fold_reduced_comultiplication}), since for a forest $F$ of size $k$ the sum is empty. It remains to show (iii) and (iv).

    We prove (iii) by induction on $k$. To lighten notation, we write $P^i$ for $P^i\As_{\geq 1}(m)$. The base case, $k=1$, is settled by (ii). Let $1\leq k<n$ and assume that $\overline{\Delta}(P^{k})\subset\sum_{i=1}^{k-1}P^{i}\otimes P^{k-i}$. The filtration stage $P^{k+1}$ is generated by $P^k$ together with all products of exactly $k+1$ primitive elements. Consider an element $z\in P^{k+1}$ of the latter sort, i.e.\ $z=xy$ for $x\in P^k$ and $y\in P^1=\Prim(\As(m))$. Using $\overline{\Delta}(y)=0$ (by (ii)), (\ref{eq:reduced_comult_of_product}) becomes
    \begin{align*}
        \overline{\Delta}(z)=\overline{\Delta}(x)\cdot(1\otimes y+y\otimes 1)+x\otimes y+y\otimes x.
    \end{align*}
    Now, by induction hypothesis, we have $\overline{\Delta}(x)\in\sum_{i=1}^{k-1}P^{i}\otimes P^{k-i}$. Using $P^i\cdot y\subset P^{i+1}$, we finally obtain
     \begin{align*}
        \overline{\Delta}(z)\in \sum_{i=1}^{k-1}P^{i}\otimes P^{k+1-i}+\sum_{i=1}^{k-1}P^{i+1}\otimes P^{k-i}+P^{k}\otimes P^1+P^1\otimes P^k=\sum_{i=1}^{k}P^{i}\otimes P^{k+1-i},
    \end{align*}
    which proves (iv).

    The proof of (iv) also proceeds by induction on $k$, with the base case being (ii) again. Assume that $P^{k-1}\subset\ker(\overline{\Delta}^{k-1})$. Let $z\in P^{k}$. By (iv) we can write
    $\overline{\Delta}(z)=\sum_{i=1}^{k-1}\sum_j x_{ij}\otimes y_{ij}$
    where $x_{ij}\in P^i$ and $y_{ij}\in P^{k-i}$. Then
    $$\overline{\Delta}^{k}(z)\coloneqq (\overline{\Delta}^{k-1}\otimes\id)\circ\overline{\Delta}(z)=\sum_{i=1}^{k-1}\sum_j \overline{\Delta}^{k-1}(x_{ij})\otimes y_{ij}=0$$
    since all $\overline{\Delta}^{k-1}(x_{ij})=0$ by induction hypothesis. This concludes the proof of (iv).
\end{proof}

\begin{theorem}\label{thm:comparison_primitive_and_size_filtration}
    Over $\Q$, the reduced primitive filtration and the size filtration of $\As^{(\FI)}(m)_{\Q}$ coincide. In particular, $\Prim(\As^{(\FI)}(m)_{\Q})$ is the submodule of $\As^{(\FI)}(m)_{\Q}$ generated by tree diagrams.

    When $m=1$, this already holds over $\Z$.
\end{theorem}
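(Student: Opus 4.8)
\emph{Strategy.} The plan is to prove the two inclusions $F^{\bullet}\As^{(\FI)}(m)\subseteq P^{\bullet}\As^{(\FI)}(m)$ and $P^{\bullet}\As^{(\FI)}(m)_{\Q}\subseteq F^{\bullet}\As^{(\FI)}(m)_{\Q}$ separately. For the first, over $\Z$, I argue by induction on $k$: a size-$k$ forest $F$ with trees $T_1,\dots,T_k$ differs from any stacked representative $(F)_{\sigma}=T_{\sigma(1)}\cdots T_{\sigma(k)}$ by a sum of size-$(k-1)$ forests, since one passes from $(F)_{\sigma}$ to $F$ by slide moves and each slide move produces, via $\STU$, a single size-$(k-1)$ forest. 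As connected diagrams are primitive (empty reduced coproduct), $(F)_{\sigma}$ is a product of $k$ primitives, so $F\in P^k\As^{(\FI)}(m)+F^{k-1}\As^{(\FI)}(m)\subseteq P^k\As^{(\FI)}(m)$ by induction. For the second inclusion, over $\Q$, I reduce to the single case $k=1$: once $\Prim(\As^{(\FI)}(m)_{\Q})=F^1\As^{(\FI)}(m)_{\Q}$ is known, then $P^k\As_{\geq1}(m)_{\Q}$ is spanned by products of $\leq k$ primitives, each primitive is a $\Q$-combination of tree diagrams, and a product of $\leq k$ trees is a stacked size-$\leq k$ forest, hence lies in $F^k\As^{(\FI)}(m)_{\Q}$; combined with the first part this gives $P^k=F^k$ for all $k$, which is the theorem (the ``in particular'' being $k=1$). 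When $m=1$ the corresponding equality $\Prim(\As^{(\FI)}(1))=F^1\As^{(\FI)}(1)$ holds over $\Z$ by Lando's result, and the same reduction then yields $P^k=F^k$ over $\Z$.

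\emph{The case $k=1$.} The inclusion $F^1\subseteq\Prim$ is clear (trees are primitive). For the converse I study the associated graded $E\coloneqq\bigoplus_{k\geq0}F^k\As^{(\FI)}(m)_{\Q}/F^{k-1}\As^{(\FI)}(m)_{\Q}$ of the size filtration. It is a connected $\N$-graded Hopf algebra over $\Q$: the product respects the filtration ($F^i\cdot F^j\subseteq F^{i+j}$, as stacking adds sizes), and so does $\overline{\Delta}$ (for $|F|=k$ the formula $\overline{\Delta}(F)=\sum_{\emptyset\neq J\subsetneq\pi_0(F)}F_J\otimes F_{\pi_0(F)\setminus J}$ lands in $\sum_{i+j\leq k}F^i\otimes F^j$). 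It is co-commutative, inherited from $\As^{(\FI)}(m)$, and it is also commutative: sliding the legs of a size-$k$ forest $F$ past those of a size-$l$ forest $G$ converts $F\cdot G$ into $G\cdot F$ up to a sum of size-$(k+l-1)$ forests, so $[F][G]=[G][F]$ in $E$. By the structure theorem for connected graded commutative co-commutative Hopf algebras in characteristic zero, $E\cong S(\Prim E)$ as graded Hopf algebras. Now $E$ is generated as an algebra by $E^1=F^1\As^{(\FI)}(m)_{\Q}$ (each forest is the product of its trees modulo lower size) and $E^1\subseteq\Prim E$, so the induced Hopf algebra map $S(E^1)\to E=S(\Prim E)$ is surjective, hence an isomorphism, forcing $\Prim E=E^1$. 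In particular $\overline{\Delta}_E$ is injective on $E^k$ for all $k\geq2$.

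\emph{Conclusion of $k=1$.} Suppose $x\in\As^{(\FI)}(m)_{\Q}$ is primitive, nonzero, with minimal size $k$, and assume $k\geq2$. Then $0\neq\bar x\in E^k$, while the top (size-$k$) part of the identity $\overline{\Delta}(x)=0$ reads $\overline{\Delta}_E(\bar x)=0$, contradicting the injectivity just established. Hence $k\leq1$, i.e.\ $x\in F^1\As^{(\FI)}(m)_{\Q}$, completing the proof.

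\emph{On obstacles and the role of $\Q$.} The delicate points I expect are the verification that $E$ is commutative (the slide-move bookkeeping with $\STU$) and the correct identification of the size filtration on $\As^{(\FI)}(m)\otimes\As^{(\FI)}(m)$, so that the ``top part'' of $\overline{\Delta}(x)=0$ really is $\overline{\Delta}_E(\bar x)$. Conceptually, characteristic zero is essential: it underlies both the symmetric-algebra structure theorem above and, equivalently, the existence of explicit sections $s^{k+1}\colon F^{k+1}\As^{(\FI)}(m)_{\Q}\to F^k\As^{(\FI)}(m)_{\Q}$, $s^{k+1}(F)=\tfrac{1}{(k+1)!}\sum_{\sigma\in\Sy_{k+1}}(F-(F)_{\sigma})$, which split the size filtration and provide a concrete projector $s^2\circ\cdots\circ s^n\colon\As^{(\FI)}_n(m)_{\Q}\twoheadrightarrow\Prim(\As^{(\FI)}(m)_{\Q})_n$; this is exactly why the statement is not claimed over $\Z$ when $m>1$.
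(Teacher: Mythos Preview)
Your argument is correct and takes a genuinely different route from the paper's proof. The paper proceeds by constructing, for each $k$, an explicit section $s^{k+1}_n\colon F^{k+1}\As_n(m)_{\Q}\to F^k\As_n(m)_{\Q}$ to the inclusion $i^k_n$, namely $s^{k+1}_n(F)=F-\tfrac{1}{(k+1)!}\mu^k\circ\overline{\Delta}^k(F)$, and then shows (using the lemma $P^k\subset\ker\overline{\Delta}^k$) that this section restricts to the identity on $P^k\As_n(m)_{\Q}$; a short diagram chase then forces $F^k=P^k$ directly, by downward induction on $k$. You instead reduce everything to the single case $k=1$ and handle that case structurally: passing to the associated graded $E$ of the size filtration, you observe that $E$ is commutative (slide moves modulo lower size) as well as co-commutative and connected, invoke Milnor--Moore to get $E\cong S(\Prim E)$, and deduce $\Prim E=E^1$ from the fact that $E$ is generated in size-degree $1$; a minimal-size contradiction then pins down $\Prim=F^1$. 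The paper's approach is more constructive and yields the explicit projectors $s^2\circ\cdots\circ s^n$ that are reused later (they are matched with the maps $\pi^k_n$ in Theorem~\ref{thm:presentation_primitive_filtration}), while your approach is cleaner conceptually and makes transparent \emph{why} characteristic zero enters (the structure theorem for $E$). For $m=1$ the paper does not cite Lando but observes that commutativity makes all $(F)_\sigma$ equal, so the averaging in $s^{k+1}_n$ is unnecessary and the section is already defined over $\Z$; your reduction via Lando's result works equally well.
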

\begin{proof}
    We omit $\FI$ in the notation since the proof for that case is the same.
    
    In degree $n=0$, there is nothing to prove. Fix a degree $n>0$. First, any forest diagram of size $1$ is primitive by definition of the co-multiplication, i.e.\ $F^1\As_n(m)_{\Q}\subset P^1\As_n(m)_{\Q}$.

    In fact, $F^1\As_n(m)_{\Q}\subset P^1\As_n(m)_{\Q}$ implies that $F^k\As_n(m)_{\Q}\subset P^k\As_n(m)_{\Q}$ for each $k\geq 1$. We show this by induction\footnote{This argument is similar to the proof of Proposition 6.10 in \cite{habiro2000claspers}, which deals with forest claspers instead of forest diagrams.}. The base case is the above paragraph. Assume that $F^k\subset P^k$ and consider a a forest diagram $F$ of size $k+1$. Pick any tree $T$ in $F$ and apply a sequence $F=F_0\rightsquigarrow\dots\rightsquigarrow F_r$ of $\STU$-relations to slide the legs of $T$ above the rest of the trees. Thus, $F_r=T\cdot (F\setminus T)\subset P^1\cdot F^k\subset P^1\cdot P^k\subset P^{k+1}$. For each $i$, $F_{i}-F_{i+1}$ is a forest of size $k$ by $\STU$, i.e.\ $F_{i}-F_{i+1}\in F^k\subset P^k$. Therefore,
    $$F=\sum_{i=0}^{r-1}(F_i-F_{i+1})+F_r\subset P^{k+1}.$$
    
    Consequently, the size filtration injects into the primitive filtration:
\[\begin{tikzcd}
	{F^1\As_n(m)_{\Q}} & {F^2\As_n(m)_{\Q}} & \dots & {F^{n-1}\As_n(m)_{\Q}} & {F^n\As_n(m)_{\Q}} \\
	{P^1\As_n(m)_{\Q}} & {P^2\As_n(m)_{\Q}} & \dots & {P^{n-1}\As_n(m)_{\Q}} & {P^n\As_n(m)_{\Q}}
	\arrow[hook, from=2-1, to=2-2]
	\arrow[hook, from=2-2, to=2-3]
	\arrow[hook, from=1-1, to=1-2]
	\arrow[hook, from=1-2, to=1-3]
	\arrow[hook, from=1-1, to=2-1]
	\arrow[hook, from=1-2, to=2-2]
	\arrow[equal, from=1-5, to=2-5]
	\arrow[hook, from=1-3, to=1-4]
	\arrow[hook, from=1-4, to=1-5]
	\arrow[hook, from=2-3, to=2-4]
	\arrow[hook, from=2-4, to=2-5]
	\arrow[hook, from=1-4, to=2-4]
\end{tikzcd}.\]
We show that the vertical arrows are all equalities, inductively from right to left. Let $1\leq k< n$ and assume that $F^{k+1}=P^{k+1}$. We seek for a section to the inclusion $i^{k}_n$ as in the diagram below
\begin{equation}\label{eq:com_diag_proof_comparison_filtrations_size_primitive}\begin{tikzcd}
	{F^{k}\As_n(m)_{\Q}} & {F^{k+1}\As_n(m)_{\Q}} \\
	{P^{k}\As_n(m)_{\Q}} & {P^{k+1}\As_n(m)_{\Q}}
	\arrow[hook, from=2-1, to=2-2]
	\arrow["{i^{k}_n}"', hook, from=1-1, to=1-2]
	\arrow[hook, from=1-1, to=2-1]
	\arrow[equal, from=1-2, to=2-2]
	\arrow["{s^{k+1}_n}"',two heads, bend right=10, from=1-2, to=1-1]
\end{tikzcd}.\end{equation}
Consider the linear map
\begin{equation}\label{eq:section_primitive=size_filtration}
\begin{split}
    s^{k+1}_n\colon F^{k+1}\As_n(m)_{\Q}&\longrightarrow\As_n(m)_{\Q}\\
    F&\longmapsto F-\tfrac{1}{(k+1)!}\mu^{k}\circ\overline{\Delta}^{k}(F),
\end{split}
\end{equation}
where $\mu^{k}\colon\As(m)^{\otimes k+1}\rightarrow\As(m)$ is the $m$-fold multiplication. We have:
\begin{itemize}
    \item \textbf{The image of $s^{k+1}_n$ lies in $F^k\As_n(m)_{\Q}$.} Indeed, for a forest diagram $F$ of size $k+1$, say $F=\bigcup_{i=1}^{k+1}T_i$, we have\footnote{There is no natural ordering of the trees of $F$, hence a choice of identification $\pi_0(F)\cong\{1,\dots,s\}$ is required. However, the sum in (\ref{eq:description_s_section}) is over all permutations, hence this choice is irrelevant.}
    \begin{equation}\label{eq:description_s_section}
        s^{k+1}_n(F)=\frac{1}{(k+1)!}\sum_{\sigma\in\Sy_{k+1}}(F-(F)_{\sigma})
    \end{equation}
    where $(F)_{\sigma}\coloneqq T_{\sigma(1)}\dots T_{\sigma(s)}$ is obtained by stacking the trees of $F$ in the order prescribed by $\sigma$. Each difference $(F-(F)_{\sigma})$ can be obtained by a sequence of leg exchanges (a path in $\Fs(F)$, see Definition \ref{def:graph_of_forests}) from $F_{\sigma}$ to $F$, hence we obtain $F-F_{\sigma}$ as a sum of forests of size $k$ (each leg exchange yields a term where the two legs are attached together, by $\STU$). Thus $s^{k+1}_n(F)\in F^k\As_n(m)_{\Q}$.

    For example,
    \end{itemize}
    \begin{align*}
        s^{3}_3\paren{\vcenter{\hbox{\includegraphics[height=30pt]{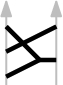}}}}=\frac{1}{2}\paren{\vcenter{\hbox{\includegraphics[height=30pt]{Ressources/section_primitive_filtration/section_primitive_filtration-term-F}}}-\vcenter{\hbox{\includegraphics[height=30pt]{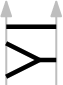}}}}+\frac{1}{2}\paren{\vcenter{\hbox{\includegraphics[height=30pt]{Ressources/section_primitive_filtration/section_primitive_filtration-term-F}}}-\vcenter{\hbox{\includegraphics[height=30pt]{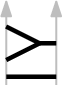}}}}
        =\frac{1}{2}\paren{\vcenter{\hbox{\includegraphics[height=30pt]{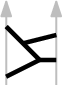}}}}+\frac{1}{2}\paren{-\vcenter{\hbox{\includegraphics[height=30pt]{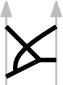}}}-\vcenter{\hbox{\includegraphics[height=30pt]{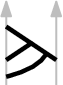}}}}.
    \end{align*}
    \begin{itemize}
    \item \textbf{The map $s^{k+1}_n$ is a section to the inclusion $i^k_n$.} Indeed, for $F\in F^k\As_n(m)_{\Q}$, we have $\overline{\Delta}^k(F)=0$ by Lemma \ref{lem:properties_of_reduced_comultiplication}(vi). Thus $s^{k+1}_n$ restricts to the identity on $F^k\As_n(m)_{\Q}$.
\end{itemize}
By Lemma \ref{lem:properties_of_reduced_comultiplication}(v), $s^{k+1}_n$ also restricts to the identity on $P^k\As_n(m)_{\Q}$. Thus, starting with $x\in P^k\As_n(m)_{\Q}$ and going around the diagram (\ref{eq:com_diag_proof_comparison_filtrations_size_primitive}) in the counterclockwise direction, we end up with $s(x)=x$, which shows that the vertical inclusion $F^k\monic P^k$ is also surjective. This concludes the proof that the two filtrations coincide.

It follows that
$$\Prim(\As(m)_{\Q})=P^1\As_{\geq 1}(m)_{\Q}=F^1\As_{\geq 1}(m)_{\Q},$$
i.e.\ the primitive part of $\As(m)_{\Q}$ is additively generated by forests of size $1$, that is, trees.

When $m=1$, the bialgebra $\As(m)$ is commutative. Thus, if $F$ is a forest of size $k+1$, then all $(F)_{\sigma}$, $\sigma\in\Sy_{k+1}$, are equal and averaging is not needed. In particular, the section $s^{k+1}_n$ defined in (\ref{eq:section_primitive=size_filtration}) is already defined over $\Z$. This yields an alternative proof of the fact that tree diagrams generate the primitive Lie algebra of $\As(1)$ over $\Z$. See \cite{lando2primitive} for a different argument.
\end{proof}

In order to better understand that filtration, and more particularly the primitive Lie algebra $\Prim(\As^{(\FI)}(m)_{\Q})$, a concrete description of the filtration steps, i.e.\ by generators and relations, is required. This is the topic of the next two sections.

\subsection{Permutographs and graphs of forests}\label{sec:permutographs_and_graphs_of_forests}
\subsubsection{Permutographs.}
A \emph{$n$-permutohedron} is a polytope defined as the convex hull of a set of the form
\begin{equation}\label{eq:permutohedron}
    \{(a_{\sigma(1)},\dots,a_{\sigma(n)})\mid\sigma\in\Sy_n\}\subset\R^n
\end{equation}
where $\textbf{a}=(a_1,\dots,a_n)\in\R$. When $\textbf{a}=(1,2,\dots,n)$, we call (\ref{eq:permutohedron}) the \emph{standard $n$-permutohedron}. They are rich combinatorial objects that appear naturally in many places \cite{ziegler2012lectures_on_polytopes,postnikov2009permutohedra,lambrechtsTurchinVolic2010permutohedron}. For example, their faces classify the labelled partitions of finite sets.

To study graphs of forests, we will actually focus on $2$-skeleta of permutohedra. For a tuple of non-negative integers $\textbf{n}\coloneqq (n_1,\dots,n_s)\in\N^{s}$, consider the set $V_{\textbf{n}}$ of all possible words made of $n_i$-many $i$, for each $i=1,\dots,s$. In particular, we write $\underline{w}_{\textbf{n}}\coloneqq \underbrace{1\dots 1}_{n_1}\dots\underbrace{s\dots s}_{n_s}$ and $\overline{w}_{\textbf{n}}\coloneqq \underbrace{s\dots s}_{n_s}\dots\underbrace{1\dots 1}_{n_1}$. The symmetric group $\Sy_n$, where $n\coloneqq\sum_i n_i$, acts on $V_{\textbf{n}}$ by permutation, denoted $\sigma\cdot w$ for $\sigma\in\Sy_n,w\in V_{\textbf{n}}$. The action is transitive but has many fixed points, unless all $n_i=1$.

\begin{definition}\label{def:permutohedron_graph}
    The \emph{\textbf{n}-permutohedron graph}, or \emph{\textbf{n}-permutograph}, is the undirected graph $P_{\textbf{n}}\coloneqq (V_{\textbf{n}},E_{\textbf{n}})$, whose set of vertices is $V_{\textbf{n}}$ and two distinct vertices $v,w\in V_{\textbf{n}}$ are connected by an undirected edge if there is an basic transposition $\tau_i=(i,i+1)\in\Sy_n$ such that $w=\tau\cdot v$ (hence also $v=\tau\cdot w$), i.e.\ $v=\dots jk\dots$ and $w=\dots kj\dots$ where $1\leq j\neq k\leq s$ are the $i$-th and $(i+1)$-st entries of $v,w$.    
\end{definition}
There is at most one edge joining any pair of vertices and at most $n-1$ edges adjacent to a given vertex, since there are only $n-1$ adjacent transpositions $\tau_1,\dots,\tau_{n-1}$. Note that permutographs are connected, since the $\Sy_{\textbf{n}}$-action on $V_{\textbf{n}}$ is transitive.

\begin{figure}[!ht]
     \centering
        \begin{subfigure}[b]{0.45\textwidth}
              \centering
        \includegraphics[scale=0.7]{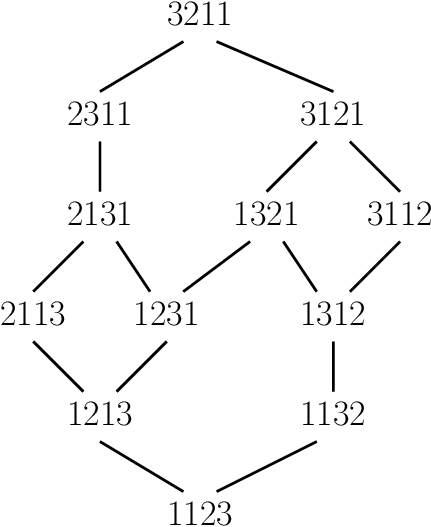}
        \caption{The permutograph $P_{(2,1,1)}$}
        \label{fig:permutograph_1123}
    \end{subfigure}
     \hfill
     \begin{subfigure}[b]{0.45\textwidth}
              \centering
        \includegraphics[scale=0.7]{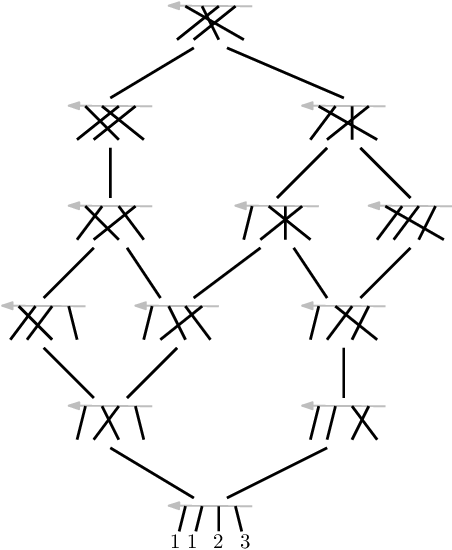}
        \caption{Graph of labelled legs on one strands}
        \label{fig:graph_labelled_legs_on_one_strand}
    \end{subfigure}
        \caption{One slice of a graph of labelled forests}
        \label{fig:permutograph_and_graph_of_labelled_forests}
\end{figure}

\begin{proposition}\label{prop:1-homology_of_P_n_generated_by_squares_and_hexagons}
    The first homology group of the permutograph $P_{\textbf{n}}$ is generated by length $4$ and $6$ cycles. In other words, any $1$-cycle in $P_{\textbf{n}}$ is the boundary of a union of squares and hexagons.
\end{proposition}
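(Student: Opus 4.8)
The plan is to reduce the statement to the combinatorics of $\Sy_n$ equipped with its Coxeter presentation. Since $P_{\textbf{n}}$ is a connected graph, $H_1(P_{\textbf{n}})$ is the free abelian group of $1$-cycles, and it is generated by the $1$-cycles $[\gamma]$ of closed walks $\gamma$ — for instance, the fundamental cycles of any spanning tree are closed walks and already form a $\Z$-basis. So it suffices to show that for every closed walk $\gamma$ in $P_{\textbf{n}}$ the $1$-cycle $[\gamma]$ is a $\Z$-linear combination of the $1$-cycles supported on the $4$-cycles (squares) and $6$-cycles (hexagons) of $P_{\textbf{n}}$.

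The structural observation that makes this work is that any closed walk
\[
\gamma\colon\ w_0\ \xrightarrow{\ \tau_{k_1}\ }\ w_1\ \longrightarrow\ \cdots\ \xrightarrow{\ \tau_{k_\ell}\ }\ w_\ell=w_0
\]
has $\tau_{k_\ell}\cdots\tau_{k_1}=\mathrm{id}$ in $\Sy_n$. Indeed, an edge of $P_{\textbf{n}}$ only ever swaps two adjacent entries of a word that carry \emph{distinct} values, so — regarding the $n$ entries as labelled tokens — two tokens bearing the same value never change their relative order along $\gamma$ (the only move that could exchange them is the transposition of their two positions, which is not an edge precisely when the values coincide). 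Since $\gamma$ is closed, the resulting permutation fixes $w_0$ and preserves the relative order of all equal-valued tokens, hence is the identity. (When every $n_i=1$ this is just freeness of the $\Sy_n$-action on $V_{\textbf{n}}$.)

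Now fix such a $\gamma$. Because $\tau_{k_\ell}\cdots\tau_{k_1}=\mathrm{id}$, the classical solution of the word problem in $\Sy_n$ provides a finite sequence of \emph{braid moves} $\tau_i\tau_{i+1}\tau_i\leftrightarrow\tau_{i+1}\tau_i\tau_{i+1}$, \emph{commutation moves} $\tau_i\tau_{i'}\leftrightarrow\tau_{i'}\tau_i$ for $\abs{i-i'}>1$, and \emph{cancellations} $\tau_i\tau_i\to\varnothing$ — and, crucially, no $\tau_i\tau_i$-insertions (the strong, length-non-increasing form of the reduction, which for $\Sy_n$ follows from the exchange condition) — transforming the word $\tau_{k_\ell}\cdots\tau_{k_1}$ into the empty word. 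I carry out these moves on the walk itself, maintaining the invariant that the current word always labels a closed walk in $P_{\textbf{n}}$ (true at the start by the previous paragraph). A cancellation removes a back-and-forth along a single edge, i.e.\ a spur, which leaves the $1$-cycle unchanged; a commutation move replaces one half of a $4$-cycle of $P_{\textbf{n}}$ by its complementary half, changing the $1$-cycle by $\pm$ the cycle of that square; a braid move does the same with a $6$-cycle, changing it by $\pm$ the cycle of that hexagon. When the word has become empty the walk is constant, so $[\gamma]$ is exhibited as a $\Z$-linear combination of cycles of squares and hexagons, as desired.

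The one substantial point to check — and the main obstacle — is that each move above is actually realizable inside $P_{\textbf{n}}$, i.e.\ that the square or hexagon it invokes genuinely occurs; this is exactly where the invariant is used. If three consecutive letters of a walk-labelling word are $\tau_i\tau_{i+1}\tau_i$, write $a,b,c$ for the entries of the current word in positions $i,i+1,i+2$: the successive activeness conditions force $a\ne b$, then $a\ne c$, then $b\ne c$, so $a,b,c$ are pairwise distinct. Then the six transpositions along \emph{both} halves of the associated hexagon are active, its six vertices (the six arrangements of $a,b,c$ in those three positions) are distinct, and the braid move is legitimate, changing the walk by the boundary of this genuine $6$-cycle. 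The argument for $\tau_i\tau_{i'}$ with $\abs{i-i'}>1$ is identical and easier: activeness forces both relevant pairs of entries to be distinct, so the $4$-cycle is present. A cancellation $\tau_i\tau_i$ appearing in a walk-word is automatically a spur, since if $v\to\tau_i v$ is an edge then $\tau_i$ is also active at $\tau_i v$. Since each move sends walks to walks, the invariant propagates and the reduction goes through to the end. (Bigons arising from the relations $\tau_i\tau_i=1$ carry the zero $1$-cycle, which is why only $4$- and $6$-cycles are needed.)
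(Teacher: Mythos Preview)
Your proof is correct and takes a genuinely different route from the paper's own argument. The paper proves the proposition by a direct Morse-theoretic induction: it introduces the height function $h(v)=\#\{\text{inversions in }v\}$, and for any cycle repeatedly lowers a local maximum by rerouting through a square (when the two descending edges commute) or a hexagon (when they braid). Your argument instead reduces the problem to Tits' solution of the word problem in the Coxeter group $\Sy_n$: a closed walk gives a word in the $\tau_i$ representing the identity, which can be brought to the empty word using only braid moves, commutation moves, and cancellations (no insertions), each of which changes the carried $1$-cycle by a hexagon, a square, or nothing.

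The paper is aware of your approach---it is exactly the content of Remark~\ref{rem:symmetric_group_relations_cycles}---but explicitly declines to make it rigorous, writing that ``since the action of $\Sy_n$ on the vertices of $P_{\textbf{n}}$ is not free, more details are needed.'' Your token-tracking argument (same-valued tokens never swap along the walk, hence the permutation fixes each value class in order) is precisely the missing detail that forces the closed-walk word to equal the identity in $\Sy_n$ despite the non-free action, and your check that braid and commutation replacements remain inside $P_{\textbf{n}}$ (because the relevant entries are forced to be pairwise distinct) closes the second gap. What your approach buys is a cleaner conceptual explanation and immediate portability to other Coxeter-type situations; what the paper's height-function proof buys is self-containment, avoiding any appeal to the (admittedly standard) length-non-increasing form of Tits' reduction.
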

\begin{proof}
    To prove this, we scan $P_{\textbf{n}}$ using the \emph{height function}
    \begin{equation}
            h\colon V_{\textbf{n}}\longrightarrow\N\\
            \colon v\longmapsto\sum_{i=1}^{n}\#\{j>i\mid v_j<v_i\}
    \end{equation}
    which assigns to a word $v$ the number $h(v)$ of pairs of letters $\dots v_i\dots v_j\dots$ where $v_j< v_i$ appearing in it, i.e.\ the number of inversions in $v$. For example, $h(\underline{w}_{\textbf{n}})=0$ is minimal and $h(\overline{w}_{\textbf{n}})$ is maximal in $P_{\textbf{n}}$. The function $h$ is a Morse function in the sense that, for any edge $\{v,w\}$, we have $h(w)=h(v)\pm 1$. Pictorially, when drawing $P_{\textbf{n}}$ with all vertices of height $i$ at height $i$, no edge is horizontal (see Figure \ref{fig:permutograph_1123}).

    Now we show that any $1$-cycle $C$ in $P_{\textbf{n}}$ is a sum of length $4$ and $6$ cycles, by double induction on $\max_{v\in C} h(v)$ and on the number of vertices reaching this maximum. Up to decomposing $C$ into smaller subcycles and removing paths going back and forth, we can assume that $C=\overrightarrow{v^0v^1}+\dots+\overrightarrow{v^{l-1}v^l}$ where $(v^0,\dots,v^{l}=v_0)$ is a sequence without repetitions (except ($v^l=v^0$). If $\max_{v\in C} h(v)=0$ then there is nothing to prove since $C=0$.
    
    Assume the statement to be true for all cycles of maximal height $<\max_{v\in C} h(v)$ and with less maxima than $C$. Pick any vertex $v^M\in C$ that reaches the maximum, then we must have $h(v^{M\pm 1})=h(v^M)-1$. By definition of the permutograph, $v^{M-1}=\tau_i\cdot v^M$ and $v^{M+1}=\tau_{j}\cdot v^M$, where $i\neq j$. We assume $i<j$ to simplify notation. There are two cases to consider:
    \begin{enumerate}
        \item If $\abs{j-i}>1$, then $\tau_i$ and $\tau_j$ have disjoint support and the vertex $\tilde{v}^M\coloneqq \tau_i\tau_j\cdot v^M=\tau_i\cdot v^{M+1}=\tau_j\cdot v^{M-1}$ forms a square together with $v^M,v^{M+1},v^{M-1}$ (we use the relation $\tau_i\tau_j=\tau_j\tau_i$ in $\Sy_n$). In particular, we have
        \begin{equation}\label{eq:extracting_square_from_cycle}
        \begin{split}
            C&=\overrightarrow{v^0v^1}+\dots+\overrightarrow{v^{M-1}v^M}+\overrightarrow{v^Mv^{M+1}}+\dots+\overrightarrow{v^{l-1}v^l}\\
            &=\underbrace{(\overrightarrow{v^0v^1}+\dots+\overrightarrow{v^{M-1}\tilde{v}^M}+\overrightarrow{\tilde{v}^Mv^{M+1}}+\dots+\overrightarrow{v^{l-1}v^l})}_{\eqqcolon C'}+c=C'+c
        \end{split}
        \end{equation}
        where $c\coloneqq \overrightarrow{v^{M-1}v^M}+\overrightarrow{v^Mv^{M+1}}-\overrightarrow{v^{M-1}\tilde{v}^M}-\overrightarrow{\tilde{v}^Mv^{M+1}}$ is a length $4$ cycle, and $C'$ has one less maximum. We are done by induction.
        \item If $\abs{j-i}=1$, say $j=i+1$, then the above does not work since the vertices $\tau_i \tau_{i+1}\cdot v^M$ and $\tau_{i+1}\tau_i\cdot v^M$ are distinct. In this case we need to dive lower in the permutograph and use instead the relation $\tau_i\tau_{i+1}\tau_i=\tau_{i+1}\tau_i\tau_{i+1}$ in $\Sy_n$. Note that $v_i^M>v_{i+1}^M>v_{i+2}^M$ since both $\tau_i$ and $\tau_{i+1}$ need to reduce the number of inversions. We find an hexagon in $P_{\textbf{n}}$ as the one at the bottom of Figure \ref{fig:permutograph_1123}, where $(v^M_i,v^M_{i+1},v^M_{i+2})=(3,2,1)$. In particular, $C=C'+c$ with $C'\coloneqq C-c$ where $c$ is the hexagon cycle. Again, $C'$ has less maxima and we are done by induction.
    \end{enumerate}
\end{proof}

\begin{remark}\label{rem:symmetric_group_relations_cycles}
    Recall that the symmetric group $\Sy_n$ has the following presentation
    \begin{equation}\label{eq:presentation_Sn}
            \Sy_n=\left\langle\left.\tau_1\dots\tau_{n-1}\right|\begin{matrix}
        \tau_i\tau_i=1 & \text{for each }i\\
        \tau_i\tau_j=\tau_j\tau_i & \text{if }\abs{i-j}>1\\
        \tau_{i}\tau_{i+1}\tau_{i}=\tau_{i+1}\tau_{i}\tau_{i+1} & \text{for each }i
    \end{matrix}\right\rangle
    \end{equation}
    where $\tau_i=(i,i+1)$, $i=1,\dots,n-1$, are the standard transpositions. The second and third relations are called \emph{far-commutativity} and \emph{braiding} relations.
    
    A cycle in the permutograph $P_{\textbf{n}}$ can be written as a presentation of the identity permutation $\id=\tau_{k_{l-1}}\dots\tau_{k_0}$ as a product of transpositions $\tau_i$. In the proof, the assumption that the cycle has no repetitions implies that $k_i\neq k_{i+1}$ for each $i$, which we can assume since each $\tau_i$ has order two and cancelling paths going back and forth is already taken into account in the first homology group.

    The rest of the proof follows from the realization that, up to removing $\tau_i^2$ when necessary, we can make the word $\tau_{k_{l-1}}\dots\tau_{k_0}$ trivial by exclusively using far-commutativity relations and braiding relations. The former are responsible for length $4$ cycles, while the latter are responsible for length $6$ cycles.

    Since the action of $\Sy_n$ on the vertices of $P_{\textbf{n}}$ is not free, more details are needed to turn this remark into a rigorous proof.
\end{remark}

\begin{definition}
    For two graphs $G_1,G_2$, their \emph{cartesian} or \emph{box product}, denoted $G_1\Box G_2$, is the $1$-skeleton of $G_1\times G_2$. The box product of graphs is associative.
\end{definition}

\begin{corollary}\label{cor:1-homology_of_box_P_n_generated_by_squares_and_hexagons}
    The first homology of a box product of permutographs $\Box_{j=1}^m P_{\textbf{n}^j}$ is generated by length $4$ and $6$ cycles.
\end{corollary}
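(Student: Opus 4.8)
The plan is to reduce the statement about a box product $\Box_{j=1}^m P_{\textbf{n}^j}$ to Proposition \ref{prop:1-homology_of_P_n_generated_by_squares_and_hexagons} by a standard induction on the number $m$ of factors, using the general principle that the $1$-homology of a box product $G_1 \Box G_2$ of connected graphs is generated by (i) the squares coming from the product structure (each a loop $e_1 \times e_2$ for edges $e_i$ of $G_i$), and (ii) the cycles of each factor $G_i$ (embedded in a fixed slice $\{v\} \times G_2$ or $G_1 \times \{v\}$). Concretely, I would first establish this auxiliary fact: for any two connected graphs $G_1, G_2$, the group $H_1(G_1 \Box G_2)$ is generated by the images of $H_1(G_1)$ and $H_1(G_2)$ together with the set of ``elementary squares'' $\{e_1 \Box e_2\}$.

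The auxiliary fact itself can be proved as follows. Fix basepoints and consider the projection $p_1 \colon G_1 \Box G_2 \to G_1$, which is a graph morphism admitting a section $s_1$ (inclusion of a slice $G_1 \times \{v_2\}$). Given a $1$-cycle $C$ in $G_1 \Box G_2$, the cycle $C - s_1(p_1(C))$ lies in the kernel of $p_{1,*}$, i.e.\ it projects to $0$ in $G_1$. Such a cycle is a $\Z$-combination of edges, and since it projects trivially, one can homotope it (within its homology class, modulo elementary squares, which precisely realize the commutation between a ``horizontal'' move in $G_1$ and a ``vertical'' move in $G_2$) into a union of cycles each contained in a single slice $\{v_1\} \times G_2$. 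Those slice cycles are images of cycles in $G_2$. This shows $H_1(G_1 \Box G_2)$ is generated by $\mathrm{im}(H_1(G_1)) + \mathrm{im}(H_1(G_2)) + \langle \text{elementary squares}\rangle$.

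Granting this, the corollary follows by induction on $m$. For $m = 1$ it is exactly Proposition \ref{prop:1-homology_of_P_n_generated_by_squares_and_hexagons}. For the inductive step, write $\Box_{j=1}^m P_{\textbf{n}^j} = \left(\Box_{j=1}^{m-1} P_{\textbf{n}^j}\right) \Box P_{\textbf{n}^m}$, which is legitimate since the box product is associative. By the auxiliary fact, $H_1$ of this product is generated by: the image of $H_1\!\left(\Box_{j=1}^{m-1} P_{\textbf{n}^j}\right)$, which by the inductive hypothesis is generated by length $4$ and $6$ cycles; the image of $H_1(P_{\textbf{n}^m})$, which by Proposition \ref{prop:1-homology_of_P_n_generated_by_squares_and_hexagons} is generated by length $4$ and $6$ cycles; and the elementary squares, which are length $4$ cycles. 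Hence $H_1\!\left(\Box_{j=1}^m P_{\textbf{n}^j}\right)$ is generated by length $4$ and $6$ cycles, completing the induction.

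The main obstacle is the proof of the auxiliary fact, and specifically the step of pushing a $p_1$-trivial cycle into a union of slices: one must carefully track that the ``corrections'' needed to move a vertical edge past a horizontal edge are exactly the elementary squares, and that no higher-length cycles are introduced in the process. This is the graph-theoretic analogue of the Künneth/Eilenberg--Zilber decomposition, and the bookkeeping — rather than any deep idea — is where care is required. Everything else is a formal consequence of associativity of $\Box$ and the single-factor result already proven.
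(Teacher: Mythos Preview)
Your proposal is correct but takes a genuinely different route from the paper. The paper's proof is a one-liner: the height-function argument from Proposition~\ref{prop:1-homology_of_P_n_generated_by_squares_and_hexagons} applies verbatim to the box product, using the total height $h(v^1,\dots,v^m)\coloneqq\sum_j h(v^j)$; at a local maximum the two descending edges are given by transpositions which either lie in different factors (always commute, yielding a square) or in the same factor (yielding a square or hexagon as before). Your approach instead proves a general K\"unneth-type lemma for $H_1$ of box products of connected graphs and then inducts on~$m$. This is more modular --- it would apply to box products of arbitrary graphs whose $H_1$ is generated by short cycles --- but it trades the short direct argument for the bookkeeping of the auxiliary lemma, which as you correctly flag is where the real work lies. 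The paper's route is more economical here precisely because the single-factor proof already has the right structure to absorb extra factors with essentially no change.
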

\begin{proof}
    The exact same proof as Proposition \ref{prop:1-homology_of_P_n_generated_by_squares_and_hexagons} works, where now the two transpositions in case 1 can happen in different factors. Consequently, there are two types of length $4$ cycles: product of edges from different factors and squares contained in one factor.
\end{proof}
\begin{remark}\label{rem:product_symmetric_group_relations_cycles}
    Following Remark \ref{rem:symmetric_group_relations_cycles}, the above Proposition follows from the fact that the product $\Sy_{n^1}\times\dots\times\Sy_{n^m}$ is generated by standard transpositions $\tau^{(j)}_1,\dots,\tau^{(j)}_{n^j}$, for $j=1,\dots,m$, with the same relations as (\ref{eq:presentation_Sn}) for each fixed $j$, with additional commutativity relations $\tau^{(j)}_{i}\tau^{(j')}_{i'}=\tau^{(j')}_{i'}\tau^{(j)}_{i}$ whenever $j\neq j'$. These additional relations are responsible for the new type of length $4$ cycles.
\end{remark}

\subsubsection{Graph of labelled forests.}\label{sec:Graph of labelled forests}
A \emph{labelled forest diagram} is simply a forest diagram in $\Ds^F(m)$ together with a labeling of its trees by $1,\dots,s$, where $s$ is the size of the forest, i.e.\ a choice of identification $\pi_0(F)\cong\{1,\dots,m\}$. We say that two labelled forests $F,F'$ are related by a \emph{slide move} if $F'$ is obtained from $F$ by permuting adjacent legs belonging to distinct trees:
\begin{equation}
    F=\vcenter{\hbox{\includegraphics[height=25pt]{Ressources/permutograph/STU2-satisfied-by-s/term-x.eps}}}\;\xleftrightarrow{\text{slide move}}F'=\vcenter{\hbox{\includegraphics[height=25pt]{Ressources/permutograph/STU2-satisfied-by-s/term-=.eps}}}\;,
\end{equation}
where it is understood, as usual, that $F$ and $F'$ are identical outside of the part shown.

\begin{definition}
    Given $s$-many tree diagrams $T_1,\dots,T_s\in\Ds^T(m)$ on $m$ strands ($s>0$), the \emph{graph of labelled forests on $T_1,\dots,T_s$} is the undirected graph whose set of vertices is the set
$$\widetilde{\Fs}(T_1,\dots,T_s)$$
of all labelled forests on $m$ strands, whose $i$-th tree is $T_i$, and where two labelled forests are connected by an edge if they are related by a slide move.
\end{definition}

A part of a graph of labelled forests, corresponding to what happens on one strand, is shown on Figure \ref{fig:graph_labelled_legs_on_one_strand}. On this figure, the two leftmost legs (e.g.\ in the bottom diagram) belong to $T_1$, while the next ones belong to $T_2$ and $T_3$, respectively\footnote{For simplicity, we have labelled only the forest at the bottom, but all forests on the figure should be labelled accordingly.}. The legs from $T_1$ are not allowed to slide across one another, but they can slide across the two other legs. Note that any two vertices in $\tilde{\Fs}(T_1,\dots,T_s)$ are connected by \emph{at most} one edge. There are no edges when $s=1$.

\begin{proposition}\label{prop:graph_of_labelled_forests=product_of_permutographs}
    Suppose that the tree $T_i$ has $n^j_{i}$ legs on the $j$-th strand, for $1\leq i\leq s$ and $1\leq j\leq m$. Then there is a graph isomorphism defined on vertices as\footnote{Since there is at most one edge connecting two vertices, this definition determines the morphism on edges as well.}
    \begin{equation}\label{eq:graph_of_labelled_forests=product_of_permutographs}
        W\colon\widetilde{\Fs}(T_1,\dots,T_s)\xrightarrow{\cong}\Box_{j=1}^m P_{\textbf{n}^{j}}
    \end{equation}
    where $\textbf{n}^j=(n^j_1,\dots,n^j_s)$, sending a labelled forest to $(w^j)_j$ where $w^j$ encodes the ordering of the legs on the $j$-strand, from top to bottom.
\end{proposition}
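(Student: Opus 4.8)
The plan is to exhibit the map $W$ explicitly, check that it is a well-defined bijection on vertices, and then verify that it carries slide moves to edges of the box product.

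For the vertex map: given a labelled forest $F$ whose $i$-th tree is $T_i$, fix a strand $j\in\{1,\dots,m\}$. The legs of $F$ on that strand are totally ordered from top to bottom, and each of them belongs to exactly one $T_i$; recording these tree-labels gives a word $w^j(F)$ in the alphabet $\{1,\dots,s\}$. Since $T_i$ contributes exactly $n^j_i$ legs on strand $j$, the letter $i$ occurs $n^j_i$ times in $w^j(F)$, so $w^j(F)\in V_{\textbf{n}^j}$. I set $W(F)\coloneqq(w^1(F),\dots,w^m(F))$, an element of $\prod_{j=1}^m V_{\textbf{n}^j}$, which is the vertex set of $\Box_{j=1}^m P_{\textbf{n}^j}$.

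To see that $W$ is a bijection on vertices, I would unwind the definition of a forest diagram from Section~\ref{sec:reminder_algebra_of_diagrams}: a uni-trivalent diagram on $\{1,\dots,m\}\times[0,1]$ is a uni-trivalent graph together with an isotopy class of embedding of its univalent vertices into the interiors of the strands, and such an isotopy class retains nothing more than the linear order of the points lying on each strand. For a labelled forest with prescribed components $T_1,\dots,T_s$ the underlying labelled graph, with its cyclic orientations, is fixed, and on each strand the internal order of the legs of each $T_i$ is fixed by $T_i$ itself; the only remaining freedom is how, strand by strand, the legs of the different $T_i$ are interleaved, and this is precisely the datum $(w^1,\dots,w^m)$. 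This gives injectivity. For surjectivity, any tuple $(w^1,\dots,w^m)$ with $w^j\in V_{\textbf{n}^j}$ prescribes, on each strand, an interleaving of legs compatible with the fixed multiplicities; filling in which leg of $T_i$ occupies which of its assigned positions in the unique order-preserving way produces a labelled forest $F$ with $W(F)=(w^1,\dots,w^m)$, and no obstruction arises.

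Finally, for the edges: a slide move, by definition, exchanges two legs that are adjacent on a single strand $j$ and belong to distinct trees. Such a move leaves $w^{j'}(F)$ unchanged for $j'\neq j$ and replaces $w^j(F)$ by $\tau_i\cdot w^j(F)$, where $\tau_i$ swaps the two positions involved; as the two letters are distinct this is genuinely a different word, so by Definition~\ref{def:permutohedron_graph} it is exactly an edge of the factor $P_{\textbf{n}^j}$, hence an edge of the box product. Conversely, every edge of $\Box_{j=1}^m P_{\textbf{n}^j}$ moves along an edge in a single factor, i.e.\ acts on one word $w^j$ by a basic transposition swapping two distinct letters, and this is realized by a slide move on strand $j$. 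Since both graphs have at most one edge between a given pair of vertices, it follows that $W$ is a graph isomorphism. The only point requiring \emph{genuine} care is the bijectivity on vertices --- concretely, the assertion that a labelled forest with prescribed components is both determined by, and freely reconstructible from, the per-strand interleaving words; everything else is bookkeeping with the permutograph presentation already set up.
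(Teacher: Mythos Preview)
Your proposal is correct and follows exactly the approach the paper takes; the paper's own proof is a single sentence (``This is immediate, since a labelled forest is entirely determined by its trees $T_1,\dots,T_s$ and the relative positions of their legs on the strands''), and you have simply unpacked that sentence carefully.
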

\begin{proof}
    This is immediate, since a labelled forest is entirely determined by its trees $T_1,\dots,T_s$ and the relative positions of their legs on the strands.
\end{proof}

\begin{corollary}\label{cor:1-homology_of_labelled_forests_graph_generated_by_squares_and_hexagons}
    The first homology of the graph of labelled forests $\widetilde{\Fs}(T_1,\dots,T_s)$ is generated by length $4$ and $6$ cycles. In other words, any $1$-cycle in it is the boundary of a union of squares and hexagons.
\end{corollary}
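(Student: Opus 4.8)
The plan is to deduce this corollary directly from the graph isomorphism just established in Proposition~\ref{prop:graph_of_labelled_forests=product_of_permutographs} together with the homological statement about box products of permutographs in Corollary~\ref{cor:1-homology_of_box_P_n_generated_by_squares_and_hexagons}. Indeed, a graph isomorphism induces an isomorphism on the chain complexes computing simplicial (graph) homology, hence an isomorphism on first homology that carries cycles to cycles and length~$\ell$ cycles to length~$\ell$ cycles. So the content of the proof is essentially: ``apply $W$ and invoke the previous corollary.''

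More precisely, first I would recall that the first homology $H_1$ of a connected graph, viewed as a $1$-dimensional CW/simplicial complex, is free abelian, and that ``generated by length $4$ and $6$ cycles'' means that the subgroup of $Z_1$ (here $Z_1 = H_1$ since there are no $2$-cells) generated by the classes of embedded $4$-cycles and $6$-cycles is all of $H_1$; equivalently every $1$-cycle is a $\Z$-linear combination of boundaries of squares and hexagons. Then I would note that the isomorphism $W\colon\widetilde{\Fs}(T_1,\dots,T_s)\xrightarrow{\cong}\Box_{j=1}^m P_{\mathbf{n}^j}$ of Proposition~\ref{prop:graph_of_labelled_forests=product_of_permutographs} is a bijection on vertices and on edges preserving incidence, hence induces an isomorphism of the simplicial chain complexes $C_\bullet(\widetilde{\Fs}(T_1,\dots,T_s))\cong C_\bullet(\Box_{j=1}^m P_{\mathbf{n}^j})$, in particular an isomorphism $H_1(\widetilde{\Fs}(T_1,\dots,T_s))\cong H_1(\Box_{j=1}^m P_{\mathbf{n}^j})$, and this isomorphism sends a closed edge-path of length $\ell$ to a closed edge-path of length $\ell$ (since $W$ preserves edge-length). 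Corollary~\ref{cor:1-homology_of_box_P_n_generated_by_squares_and_hexagons} says the right-hand group is generated by length $4$ and $6$ cycles, so transporting a generating set back along $W^{-1}$ gives that the left-hand group is generated by length $4$ and $6$ cycles as well.

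There is essentially no obstacle here: the only point requiring a sentence of care is that $W$ is genuinely an isomorphism of graphs (not merely a vertex bijection), which is exactly the content of Proposition~\ref{prop:graph_of_labelled_forests=product_of_permutographs} — both graphs are simple (at most one edge between two vertices), so a vertex bijection that matches up the slide-move/transposition edges automatically matches up all edges. Given that, the chain-level functoriality of simplicial homology under graph isomorphisms is formal. Thus the proof is a one-line application: \emph{By Proposition~\ref{prop:graph_of_labelled_forests=product_of_permutographs}, $\widetilde{\Fs}(T_1,\dots,T_s)$ is isomorphic as a graph to $\Box_{j=1}^m P_{\mathbf{n}^j}$, and a graph isomorphism preserves the length of cycles and induces an isomorphism on first homology; the claim now follows from Corollary~\ref{cor:1-homology_of_box_P_n_generated_by_squares_and_hexagons}.}
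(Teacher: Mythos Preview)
Your proposal is correct and follows exactly the same approach as the paper: the paper's proof is the single sentence ``This follows immediately from Proposition~\ref{prop:graph_of_labelled_forests=product_of_permutographs} and Corollary~\ref{cor:1-homology_of_box_P_n_generated_by_squares_and_hexagons}.'' Your additional remarks about why a graph isomorphism preserves cycle lengths and induces an isomorphism on $H_1$ are accurate elaborations of this one-line argument.
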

\begin{proof}
    This follows immediately from Proposition \ref{prop:graph_of_labelled_forests=product_of_permutographs} and Corollary \ref{cor:1-homology_of_box_P_n_generated_by_squares_and_hexagons}.
\end{proof}

\begin{definition}
To a directed edge $\overrightarrow{FF'}$ in $\widetilde{\Fs}(T_1,\dots,T_s)$, $s\geq 2$, one can assign the third term appearing in the $\STU$ relation, which we denote $\overrightarrow{FF'}\in\Ds^{s-1}(m)$:
\begin{equation}
    \text{If }F=\vcenter{\hbox{\includegraphics[height=20pt]{Ressources/permutograph/STU2-satisfied-by-s/term-x.eps}}}\;\text{ and }F'=\vcenter{\hbox{\includegraphics[height=20pt]{Ressources/permutograph/STU2-satisfied-by-s/term-=.eps}}}\;\text{, then }\overrightarrow{FF'}=\overrightarrow{\vcenter{\hbox{\includegraphics[height=20pt]{Ressources/permutograph/STU2-satisfied-by-s/term-x.eps}}}\vcenter{\hbox{\includegraphics[height=20pt]{Ressources/permutograph/STU2-satisfied-by-s/term-=.eps}}}}\coloneqq\vcenter{\hbox{\includegraphics[height=20pt]{Ressources/permutograph/STU2-satisfied-by-s/term-Y.eps}}}.
\end{equation}
In particular, $\overrightarrow{FF'}=F'-F$ in $\As(m)$.
\end{definition}

The fact that the legs involved in a slide move belong to distinct trees is important here, otherwise $\overrightarrow{FF'}$ would have a cycle and would not be a forest diagram.

There are many paths joining forests $F,F'\in\widetilde{\Fs}(T_1,\dots,T_s)$. Hence, $\overrightarrow{FF'}$ is not well-defined for non-adjacent forests. However, any two paths differ by a cycle in $\widetilde{\Fs}(T_1,\dots,T_s)$, which by Corollary \ref{cor:1-homology_of_labelled_forests_graph_generated_by_squares_and_hexagons} can be written as a sum of back-and-forth paths, length $4$ and $6$ cycles. Thus, modding out the target $\Z\Ds^{s-1}(m)$ by those cycles will give us a well-defined map $\overrightarrow{\cdot}$.

\begin{definition}\label{def:square_and_hexagon_relations}
    A \emph{$\square$ relation} in $\Z\Ds^{s-1}(m)$ is a relation of the form
    \begin{equation}\label{eq:square_relation}
        \overrightarrow{F_1F_2}+\overrightarrow{F_2F_3}+\overrightarrow{F_3F_4}+\overrightarrow{F_4F_1}
    \end{equation}
    where $(F_1,F_2,F_3,F_4)$ is a length $4$ cycle in $\widetilde{\Fs}(T_1,\dots,T_s)$ for some trees $T_i$. See Figure \ref{fig:square_relation}.

    Similarly, a $\hexagon$ \emph{relation} in $\Z\Ds^{s-1}(m)$ is a relation of the form
    \begin{equation}\label{eq:hexagon_relation}
        \overrightarrow{F_1F_2}+\overrightarrow{F_2F_3}+\overrightarrow{F_3F_4}+\overrightarrow{F_4F_5}+\overrightarrow{F_5F_6}+\overrightarrow{F_6F_1}
    \end{equation}
    where $(F_1,\dots,F_6)$ is a length $6$ cycle in $\widetilde{\Fs}(T_1,\dots,T_s)$ for some trees $T_i$. See Figure \ref{fig:hexagon_relation}.

    Let $\square R$, resp. $\hexagon R$, denote the submodule of $\Z\Ds^{F}(m)$ generated by all $\square$, resp. $\hexagon$, relations.
\end{definition}

\begin{remark}\label{rem:HR_is_image_of_IHX}
    A $\hexagon$ relation can be obtained in the following way: start with an $\IHX$ relation that involves a leg, and break apart that leg in the three terms. The resulting signed sum of six terms is an $\hexagon$ relation, and all $\hexagon$ relations can be obtained in this way.
\end{remark}

\begin{figure}[!ht]
     \centering
     \begin{subfigure}[b]{0.4\textwidth}
         \centering
         \includegraphics[scale=1.2]{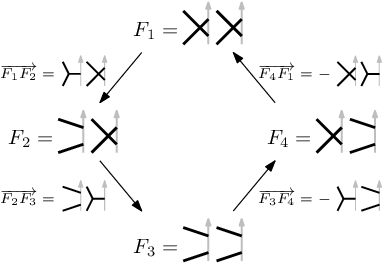}
         \caption{A $\square$ relation}
         \label{fig:square_relation}
     \end{subfigure}
     \hfill
     \begin{subfigure}[b]{0.4\textwidth}
         \centering
         \includegraphics[scale=1]{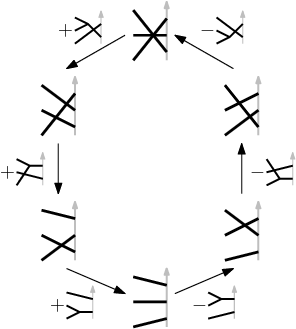}
         \caption{A $\hexagon$ relation}
         \label{fig:hexagon_relation}
     \end{subfigure}
        \caption{Relations obtained from cycles of length $4$ and $6$}
        \label{fig:square_and_hexagon_relations}
\end{figure}

By the above definition and Corollary \ref{cor:1-homology_of_labelled_forests_graph_generated_by_squares_and_hexagons}, the following definition is well-defined.

\begin{definition}\label{def:vector_from_F_to_F'_label}
    For two labelled forests $F,F'\in\widetilde{\Fs}(T_1,\dots,T_s)$, $s\geq 2$, define
    \begin{equation}\label{eq:vector_from_F_to_F'_label}
        \overrightarrow{FF'}\coloneqq\overrightarrow{FF_1}+\dots+\overrightarrow{F_{l-1}F'}\in\faktor{\Z\Ds^{s-1}(m)}{\langle\AS,\square R,\hexagon R\rangle}
    \end{equation}
    where $F\rightsquigarrow F_1\rightsquigarrow\dots\rightsquigarrow F'$ is a path from $F$ to $F'$ in $\widetilde{\Fs}(T_1,\dots,T_s)$.
\end{definition}

Any two choices of paths differ by a cycle, which can be decomposed into back-and-forth paths, squares and hexagons, by Corollary \ref{cor:1-homology_of_labelled_forests_graph_generated_by_squares_and_hexagons}. The relations $\AS$, $\square R$ and $\hexagon R$ respectively take care of these. This shows that the above definition is well-defined.

\begin{proposition}\label{prop:STU2_contains_square_relation}
    Square relations and $\STU^2$ relations coincide, i.e.\ $\square R=\STU^2$ as submodules of $\Z\Ds^{s}(m)$.
\end{proposition}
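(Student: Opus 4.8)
The plan is to match, generator by generator and sign by sign, the square relations of Definition~\ref{def:square_and_hexagon_relations} with the $\STU^2$ relations, so that the two submodules of $\Z\Ds^{s}(m)$ coincide. Let $(F_1,F_2,F_3,F_4)$ be a length-$4$ cycle in $\widetilde{\Fs}(T_1,\dots,T_{s+1})$, with associated square relation $r=\overrightarrow{F_1F_2}+\overrightarrow{F_2F_3}+\overrightarrow{F_3F_4}+\overrightarrow{F_4F_1}$. By Proposition~\ref{prop:graph_of_labelled_forests=product_of_permutographs} and the analysis in the proof of Corollary~\ref{cor:1-homology_of_box_P_n_generated_by_squares_and_hexagons}, this cycle is a ``double swap'': there are two slide moves $\alpha,\beta$ with disjoint support (on two different strands, or on the same strand at positions at distance $\geq 2$) such that $F_1\xrightarrow{\alpha}F_2\xrightarrow{\beta}F_3\xrightarrow{\alpha}F_4\xrightarrow{\beta}F_1$. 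The two $\alpha$-edges contribute $\overrightarrow{F_1F_2}+\overrightarrow{F_3F_4}$, which is the difference of the two forest diagrams obtained by merging the two $\alpha$-legs into a trivalent vertex while the $\beta$-legs occupy their two possible relative positions; the two occurrences carry opposite cyclic orientations at the new vertex because the $\alpha$-edge is traversed in opposite directions. Symmetrically for the $\beta$-edges. Reading the vertex merged from $\alpha$ (resp. from $\beta$) as the ``$Y$'' and the two positions of the $\beta$-legs (resp. $\alpha$-legs) as ``$=$''/``$\times$'', one recognises $r$ as (up to an overall sign, and applications of $\AS$) an instance $F^{=Y}-F^{\times Y}-F^{Y=}+F^{Y\times}$ of the $\STU^2$ relation, whose two ``shown parts'' are the supports of $\alpha$ and $\beta$. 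Hence every generator of $\square R$ lies in $\STU^2$.

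For the converse inclusion, start from an $\STU^2$ relation together with its two shown parts, pick one of its four terms, and ``un-merge'' its ``$Y$'' vertex: this turns both shown parts into pairs of legs lying on distinct trees and yields a size-$(s+1)$ labelled forest $F_1$ carrying two slide moves $\alpha,\beta$ with disjoint support (merging the $\alpha$-legs reforms the ``$Y$'', and likewise for $\beta$). The double-swap $4$-cycle through $F_1$ then has, by the computation just described, square relation equal to $\pm$ the given $\STU^2$ relation (modulo $\AS$), so $\STU^2\subseteq\square R$ and the two submodules agree. The degenerate case in which the two shown parts of the $\STU^2$ relation use the same pair of trees — so that no ``fully un-merged'' size-$(s+1)$ forest exists — causes no difficulty, since constructing $F_1$ only requires un-merging a single vertex, after which the other pair of legs still lies on two distinct trees.

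The step that needs genuine care is the orientation and sign bookkeeping: one must track the cyclic orientation that a \emph{directed} slide move induces at the vertex it creates and check that it matches the orientation of the corresponding ``$Y$'' in the $\STU^2$ relation. Because the two occurrences of a given merge inside a $4$-cycle arise from oppositely directed edges, they carry opposite vertex orientations, so the identification of a square relation with an $\STU^2$ relation holds literally only after invoking antisymmetry $\AS$ — which is in any case always imposed in the ambient modules where these relations are used (cf.\ Definition~\ref{def:vector_from_F_to_F'_label}); alternatively one fixes the conventions for $\overrightarrow{\cdot}$ and for the $\STU^2$ picture so that the two antisymmetrisations cancel on the nose. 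Modulo this bookkeeping, the argument is a direct unpacking of the definitions.
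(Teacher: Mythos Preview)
Your approach matches the paper's: both directions go through the template/double-swap correspondence, and the paper's proof is essentially the two-line version of what you wrote.

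Your caution about $\AS$ is well-placed --- indeed more careful than the paper itself. With the paper's convention that $\overrightarrow{FF'}\in\Ds^{s-1}(m)$ is a single forest diagram (so that $\overrightarrow{FF'}+\overrightarrow{F'F}$ is an $\AS$ relation, not zero), the two $\alpha$-edges of a square, traversed in opposite directions, contribute $Y$-vertices of \emph{opposite} cyclic orientation, whereas in a single $\STU^2$ relation the two terms carrying a $Y$ at a given site share the \emph{same} orientation (inherited from the template). A short linear-algebra check on the eight diagrams $(Y_i,P_j),(P_j,Y_i)$ shows that neither submodule contains the other in $\Z\Ds^{s}(m)$; what does hold is $\langle\AS,\square R\rangle=\langle\AS,\STU^2\rangle$, which is exactly what every subsequent use in the paper needs (cf.\ Definition~\ref{def:vector_from_F_to_F'_label} and the definition of $\Fs^{(\FI),s}(m)_{\Q}$). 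The paper's proof simply asserts that the relations ``coincide'' and does not address this point. Your proposed alternative --- choosing conventions so the two antisymmetrisations cancel on the nose --- would amount to letting $\overrightarrow{FF'}$ take values $\pm(\text{diagram})$ rather than in $\Ds^{s-1}(m)$; that works, but it is not the paper's convention.
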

\begin{proof}
    Recall \cite[Definition 3.1]{conant2008stu2} that $\STU^2$ relations among size $s$ forests are defined starting with a \emph{template}, i.e.\ a size $s-1$ forest diagram or a size $s$ Feynman diagram with a single cycle, then choosing two legs (which must be adjacent to the cycle in the latter case) and breaking one or the other apart using $\STU$. Breaking both apart yields four possible size $s+1$ forest diagrams, which form the corners of a square in a graph of labelled forests and the corresponding $\square$ relation coincides with the $\STU^2$ relation.

    Conversely, consider a square as on Figure \ref{fig:square_relation} and the associated $\square$ relation. Then the template $\vcenter{\hbox{\includegraphics[height=15pt]{Ressources/permutograph/STU2-satisfied-by-s/term-Y.eps}}}\vcenter{\hbox{\includegraphics[height=15pt]{Ressources/permutograph/STU2-satisfied-by-s/term-Y.eps}}}$ yields a $\STU^2$ relation which coincides with that $\square$ relation.
\end{proof}

\subsubsection{Graph of forests.}\label{sec:Graph of forests}
In the previous section we investigated the graph of labelled forests and the relations that are required to define differences between labelled forests that are joined by a path. However, forests diagrams in the bialgebra $\As(m)$ are not labelled. The \emph{true} graph of forests is a quotient of the graph of labelled forests, where the quotient map forgets the labelling.

\begin{definition}\label{def:graph_of_forests}
    The \emph{graph of forests} on trees $T_1,\dots,T_s$ is the graph $\Fs(T_1,\dots,T_s)$ whose vertices are all forest diagrams in $\Ds^s(m)$ consisting of those trees, and to each slide move relating two forests there is an associated edge joining the corresponding vertices.

    Note that $\Fs(T_1,\dots,T_s)$ does not depend on the ordering of the trees $T_i$. If $F=\bigcup_{i=1}^s T_i$ is a forest diagram consisting of trees $T_1,\dots,T_s$, we write $\Fs(F)\coloneqq\Fs(T_1,\dots,T_s)$.
\end{definition}

There is a natural map
\begin{equation}\label{eq:forget_labeling}
    \widetilde{\Fs}(T_1,\dots,T_s)\epic\Fs(T_1,\dots,T_s)
\end{equation}
that forgets the labels of forests. It is surjective but not injective if there are identical trees among $T_1,\dots,T_s$.

\begin{remark}
    In $\Fs(T_1,\dots,T_s)$, it can happen that two vertices are joined by multiple edges. See Figure \ref{fig:length_2_cycles} for an example: the top, resp. bottom, edge corresponds to exchanging the legs on the right, resp. left, strand.
    \begin{figure}[!ht]
        \centering
        \includegraphics[scale=1.4]{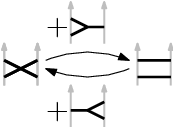}
        \caption{Two different leg moves joining two forests}
        \label{fig:length_2_cycles}
    \end{figure}

    Thus, the notation $\overrightarrow{FF'}$ is ambiguous. Given a directed edge $F\overset{e}{\rightsquigarrow}F'$ we write $\overrightarrow{e}$ to denote the third term of the corresponding $\STU$ relation, i.e.\ $\overrightarrow{e}=F'-F$ in $\As(m)$.
\end{remark}

\begin{lemma}\label{lem:path_lifting_in_graph_of_forests}
    For any labelled forest diagram $F$, the forgetful map $\widetilde{\Fs}(F)\epic\Fs(F)$ satisfies the following \emph{path-lifting property}: For any path $P=F_0\overset{e_0}{\rightsquigarrow}F_1\overset{e_1}{\rightsquigarrow}\dots\overset{e_{l-1}}{\rightsquigarrow}F_{l}$ in $\Fs(F)$ and any choice of labelled forest $\tilde{F_0}$ lifting $F_0$, there is a unique path $\tilde{P}=\tilde{F}_0\overset{\tilde{e}_0}{\rightsquigarrow}\tilde{F}_1\overset{\tilde{e}_1}{\rightsquigarrow}\dots\overset{\tilde{e}_{l-1}}{\rightsquigarrow}\tilde{F}_{l}$ in $\widetilde{\Fs}(F)$ that lifts $P$. In other words, the surjection $\tilde{\Fs}(F)\epic\Fs(F)$ is a finite covering.

    Moreover, $\overrightarrow{\tilde{e}_i}=\overrightarrow{e_i}$ for each $i=0,\dots,l-1$.
\end{lemma}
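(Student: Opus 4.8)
The plan is to show that the forgetful surjection $\widetilde{\Fs}(F)\epic\Fs(F)$ of \eqref{eq:forget_labeling} is a covering of graphs in the combinatorial sense — a graph morphism restricting to a bijection between the edges incident to each vertex upstairs and the edges incident to its image — with finite fibres, and then to read off the path-lifting property in the usual way. Finiteness is clear: the fibre over a forest $F_0\in\Ds^s(m)$ is the set of labellings of $F_0$ compatible with the identification of its components with $T_1,\dots,T_s$, a set with at most $s!$ elements. Surjectivity is the content of \eqref{eq:forget_labeling}.

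For the local-isomorphism property, fix $\tilde F_0\in\widetilde{\Fs}(F)$ lying over $F_0$. An edge of $\Fs(F)$ incident to $F_0$ is precisely a \emph{slide position}: a strand $j$ together with an index $i$ such that the legs in the $i$-th and $(i{+}1)$-st slots on the $j$-th strand belong to two \emph{distinct} components of $F_0$ (exchanging those two legs produces the adjacent forest). This is exactly the kind of datum encoded by Proposition~\ref{prop:graph_of_labelled_forests=product_of_permutographs}. Since a labelling of $F_0$ is a \emph{bijection} $\pi_0(F_0)\cong\{1,\dots,s\}$, the two components occurring in a slide position carry distinct labels in $\tilde F_0$, so the very same slide position is an edge of $\widetilde{\Fs}(F)$ incident to $\tilde F_0$; performing the exchange while keeping the labels on the trees fixed produces its other endpoint $\tilde F_1$, which is the unique lift of $F_1$ reached along this edge. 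Conversely, every edge incident to $\tilde F_0$ arises this way and maps to the corresponding edge at $F_0$. Hence the forgetful map induces a bijection between the star of $\tilde F_0$ and the star of $F_0$; in particular it is a finite covering. It is essential here to argue with edges rather than vertices, since two forests can be joined by several edges in $\Fs(F)$ (Figure~\ref{fig:length_2_cycles}).

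Path-lifting and its uniqueness now follow by induction on the length $l$ of $P$: given a lift $\tilde F_0\overset{\tilde e_0}{\rightsquigarrow}\dots\overset{\tilde e_{i-1}}{\rightsquigarrow}\tilde F_i$ of the initial segment, the covering property supplies a unique edge $\tilde e_i$ incident to $\tilde F_i$ lying over $e_i$, and $\tilde F_{i+1}$ is its other endpoint; every choice was forced, so the lift $\tilde P$ is unique. Finally, $\overrightarrow{e}\in\Ds^{s-1}(m)$ is obtained from the slide-move picture of $e$ by fusing the two exchanged legs into a single trivalent vertex, an operation that involves only the underlying unlabelled diagram; hence $\overrightarrow{\tilde e_i}=\overrightarrow{e_i}$ for every $i$.

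The only point requiring care is the edge-level bookkeeping in the covering argument: one must verify that \emph{every} slide move available at $F_0$ lifts — which uses that distinct components receive distinct labels — and that the lift is \emph{unique} — which uses that a slide position together with the labelling of the source determines the labelling of the target — all while keeping track of edges, not merely vertices, because of the possible multi-edges in $\Fs(F)$.
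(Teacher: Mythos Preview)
Your proof is correct and follows essentially the same approach as the paper, which simply says ``starting with $\tilde{F}_0$ and applying the sequence of slide moves prescribed by $P$ yields such a lift $\tilde{P}$, which is uniquely determined by this process.'' You have unpacked this one-liner into an explicit verification of the combinatorial covering property (star bijection) before invoking the standard inductive path-lifting argument, which is more thorough but not a different idea.
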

\begin{proof}
    Starting with $\tilde{F}_0$ and applying the sequence of slide moves prescribed by $P$ yields such a lift $\tilde{P}$, which is uniquely determined by this process.
\end{proof}

\begin{remark}
    The structure of the first homology $H_1(\Fs(T_1,\dots,T_s))$ over $\Z$ is not as simple as in the labelled case. The resulting new relations will be investigated in future work. The good news is that those problems disappear once we pass to rational coefficients.
\end{remark}

\begin{proposition}\label{prop:any_cycle_in_graph_forests_has_a_multiple_that_lifts}
    Any cycle in $\Fs(T_1,\dots,T_s)$ has a multiple that lifts along the covering 
    $$\widetilde{\Fs}(T_1,\dots,T_s)\epic\Fs(T_1,\dots,T_s).$$
    In particular, over $\Q$, for any cycle $C$, the element $\overrightarrow{C}$ belongs to the submodule $\langle\AS,\square R,\hexagon R\rangle\subset\Q\Ds^{s-1}(m)$.
\end{proposition}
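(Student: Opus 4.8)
The plan is to exploit the finite covering $p\colon\widetilde{\Fs}(T_1,\dots,T_s)\epic\Fs(T_1,\dots,T_s)$ supplied by Lemma~\ref{lem:path_lifting_in_graph_of_forests}, using that its fibres are finite. The case $s=1$ is vacuous (there are no edges), so one may assume $s\geq 2$.

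First I would fix a cycle $C=F_0\overset{e_0}{\rightsquigarrow}F_1\overset{e_1}{\rightsquigarrow}\cdots\overset{e_{l-1}}{\rightsquigarrow}F_l=F_0$ in $\Fs(T_1,\dots,T_s)$, based at $F_0$, and a lift $\tilde F_0\in p^{-1}(F_0)$. Lifting $C$ starting at $\tilde F_0$ (path-lifting property) produces a path in $\widetilde{\Fs}(T_1,\dots,T_s)$ ending at some $\tilde F_0'$ in the \emph{finite} fibre $p^{-1}(F_0)$; the assignment $\tilde F_0\mapsto\tilde F_0'$ is a permutation $\mu$ of $p^{-1}(F_0)$ (its inverse is lifting the reversed cycle $C^{-1}$). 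Let $N\geq 1$ be the order of $\mu$ (any common multiple of the orbit lengths would do). Lifting $C$ repeatedly and invoking uniqueness of lifts, the lift of the $N$-fold concatenation $C^{N}$ starting at $\tilde F_0$ is a path $\tilde C$ ending at $\mu^{N}(\tilde F_0)=\tilde F_0$, i.e.\ a genuine closed loop in $\widetilde{\Fs}(T_1,\dots,T_s)$ lifting $C^{N}$. This already yields the first assertion.

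For the rational statement I would evaluate $\overrightarrow{\cdot}$ along $\tilde C$. By the last line of Lemma~\ref{lem:path_lifting_in_graph_of_forests}, each edge $\tilde e$ traversed by $\tilde C$ satisfies $\overrightarrow{\tilde e}=\overrightarrow{e}$ for the corresponding edge $e$ of $C$; since $\tilde C$ runs through the edge-sequence of $C$ exactly $N$ times, summing gives $\sum_{\tilde e\in\tilde C}\overrightarrow{\tilde e}=N\cdot\overrightarrow{C}$ in $\Z\Ds^{s-1}(m)$, where $\overrightarrow{C}$ denotes $\sum_{i}\overrightarrow{e_i}$ as in the remark following Definition~\ref{def:graph_of_forests}. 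On the other hand $\tilde C$ is a closed loop in the \emph{labelled} graph, so Definition~\ref{def:vector_from_F_to_F'_label} (applied with the constant path as the competing path) — equivalently, Corollary~\ref{cor:1-homology_of_labelled_forests_graph_generated_by_squares_and_hexagons}, which writes the $1$-cycle $\tilde C$ as a $\Z$-combination of squares and hexagons, each contributing a $\square$, resp.\ $\hexagon$, relation — shows $\sum_{\tilde e\in\tilde C}\overrightarrow{\tilde e}\in\langle\AS,\square R,\hexagon R\rangle\subset\Z\Ds^{s-1}(m)$. Combining the two, $N\cdot\overrightarrow{C}\in\langle\AS,\square R,\hexagon R\rangle$, and dividing by $N$ — legitimate over $\Q$ — gives $\overrightarrow{C}\in\langle\AS,\square R,\hexagon R\rangle\subset\Q\Ds^{s-1}(m)$.

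The one genuinely delicate point is the lifting step: it rests entirely on the finiteness of the fibres of $p$, which is exactly why some \emph{multiple} of $C$ rather than $C$ itself is produced, and why the argument breaks over $\Z$ (one cannot divide by $N$) — consistent with the earlier remark that $H_1(\Fs(T_1,\dots,T_s);\Z)$ carries extra relations invisible after rationalization.
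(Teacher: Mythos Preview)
Your proof is correct and follows precisely the strategy outlined in the paper's introduction: lift the cycle, iterate until the monodromy permutation closes up, and then divide by $N$ over $\Q$. The paper's actual proof packages the same idea more tersely via the transfer homomorphism $\tau\colon H_1(\Fs)\to H_1(\widetilde{\Fs})$ for the finite covering, using that $p_*\circ\tau$ is multiplication by the degree; your explicit monodromy argument is exactly what underlies that transfer. One minor remark: you only need $N$ to be the length of the $\mu$-orbit of your chosen basepoint $\tilde F_0$, not the full order of $\mu$, but either works.
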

\begin{proof}
    Let $G$ be the group of deck transformations of the finite covering $p\colon\tilde{\Fs}(T_1,\dots,T_s)\epic\Fs(T_1,\dots,T_s)$, and $\abs{G}$ its size. Consider the transfer homomorphism $\tau\colon H_1(\Fs(T_1,\dots,T_s))\rightarrow H_1(\tilde{\Fs}(T_1,\dots,T_s))$. Then $p_*\circ\tau$ equals multiplication by $\abs{G}$. In particular, the cokernel of $p_*$ is $\abs{G}$-torsion and $p_*$ becomes surjective when passing to coefficients $\Q$. This concludes.
    
    The second statement then follows from the first and Corollary \ref{cor:1-homology_of_labelled_forests_graph_generated_by_squares_and_hexagons}.
\end{proof}

With the above result in hands, we can adapt Definition \ref{def:vector_from_F_to_F'_label} to the graph of forests, but with rational coefficients.

\begin{definition}\label{def:vector_from_F_to_F'}
    For two forests $F,F'\in\Fs(T_1,\dots,T_s)$, define
    \begin{equation}\label{eq:vector_from_F_to_F'}
        \overrightarrow{FF'}\coloneqq\overrightarrow{P}\in\faktor{\Q\Ds^{s-1}(m)}{\langle\AS,\square R,\hexagon R\rangle}
    \end{equation}
    where $P$ is a path from $F$ to $F'$ in $\Fs(T_1,\dots,T_s)$.
\end{definition}

\subsection{Presentation of the primitive filtration of \texorpdfstring{$\As(m)_{\Q}$}{A(m)Q}}\label{sec:presentation_of_primitive_filtration}
\subsubsection{Splitting trees.}
\begin{definition}\label{}
    For $1\leq s\leq n-1$, the \emph{$\Q$-module of size $s$ forest diagrams} $\Fs^s(m)_{\Q}$ is defined by
\begin{equation}
    \Fs^{(\FI),s}(m)_{\Q}\coloneqq\faktor{\Q\Ds^s(m)}{\langle(\1T),\AS,\IHX,\STU^2,\hexagon R\rangle}.
\end{equation}
It is graded by the degree of diagrams, with degree $n$ graded part denoted by $\Fs^{(\FI),s}_n(m)_{\Q}$.
\end{definition}

\begin{remark}\label{rem:simpler_relations_in_many_cases}
    When $s=1$, the submodule $\hexagon R$ is actually trivial, since an $\hexagon$ relation requires at least two trees. Thus, $\Fs^1(m)_{\Q}=\Ls(m)_{\Q}$ as $\Q$-modules.

    In degree $n$ and with $s=n$, $\STU^2$ becomes $\4T$ while $\hexagon R$ is trivial, hence we have $\Fs^n_n(m)_{\Q}\cong\As_n(m)_{\Q}$.

    If $s<n-1$, the $\hexagon$ relations are actually implied by $\STU^2$ and $\IHX$. Indeed, by Remark \ref{rem:HR_is_image_of_IHX} any $\hexagon$ relation can be obtained by breaking apart a leg in a $\IHX$ relation. Since $s<n-1$, the terms of the $\hexagon$ relation must contain a node not involved in the relation. Apply $\STU^2$ to break that additional node and rebuild a sum of two $\IHX$ relations. Thus $\hexagon R\subset\langle\STU^2,\IHX\rangle$ as submodules of $\Q\Ds_n^s(m)$ when $s<n-1$.
\end{remark}

For each $1\leq s<n$, there is a linear map
\begin{equation}\label{eq:breaking_apart_a_vertex}
    \tilde{\iota}^s_n\colon\Q\Ds_n^s(m)\rightarrow\Fs_n^{s+1}(m)_{\Q}
\end{equation}
defined as follows: given a size $s$ forest diagram $F\in\Ds^s_n(m)$, pick a trivalent vertex (which must exist since $s<n$) adjacent to a strand and break it apart according to $\STU$, i.e.
\begin{align}\label{eq:definition_of_map_iota}
    \tilde{\iota}^s_n\paren{\vcenter{\hbox{\includegraphics[height=25pt]{Ressources/permutograph/STU2-satisfied-by-s/term-Y.eps}}}}\coloneqq \vcenter{\hbox{\includegraphics[height=25pt]{Ressources/permutograph/STU2-satisfied-by-s/term-=.eps}}}-\vcenter{\hbox{\includegraphics[height=25pt]{Ressources/permutograph/STU2-satisfied-by-s/term-x.eps}}}.
\end{align}
By $\STU^2$, this does not depend on the choice of trivalent vertex to break apart.

\begin{proposition}\label{prop:map_iota_descends}
    The map $\tilde{\iota}^s_n\coloneqq\Q\Ds_n^s(m)\rightarrow\Fs_n^{(\FI),s+1}(m)_{\Q}$ factors through the linear map $\iota^s_n\colon\Fs^{(\FI),s}_n(m)_{\Q}\rightarrow\Fs^{(\FI),n+1}_n(m)_{\Q}$.
\end{proposition}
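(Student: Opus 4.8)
The claim is exactly that the linear map $\tilde{\iota}^s_n\colon\Q\Ds^s_n(m)\to\Fs^{s+1}_n(m)_{\Q}$ of \eqref{eq:breaking_apart_a_vertex} vanishes on the submodule $\langle(\1T),\AS,\IHX,\STU^2,\hexagon R\rangle$, so that it descends to $\Fs^{(\FI),s}_n(m)_{\Q}$. I check this generator by generator; the argument is the same with or without $\1T$. The single structural input used throughout is that, as already observed after \eqref{eq:definition_of_map_iota}, the element $\tilde{\iota}^s_n(F)$ does not depend on which trivalent vertex adjacent to a strand is broken (this is where $\STU^2$ enters the target), so for each relation I may break the most convenient vertex. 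Note also that every forest diagram in sight has $n-s\geq 1$ trivalent vertices, and a tree having a trivalent vertex always has one adjacent to a strand (follow a leg up to its vertex); I call such a vertex a \emph{leg-vertex}.

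For the $\1T$ relation (only relevant in the $\FI$ case): if $F$ contains an isolated chord $c$, then $F\setminus c$ has size $s-1$ and degree $n-1$, hence $n-s\geq 1$ nodes, hence a leg-vertex. Breaking such a leg-vertex leaves $c$ untouched and still isolated, so both terms of $\tilde{\iota}^s_n(F)$ contain an isolated chord and $\tilde{\iota}^s_n(F)=0$ in $\Fs^{s+1}_n(m)_{\Q}$. (When $s=1$ there is nothing to check: a tree containing an isolated chord is itself a chord, forcing $n=1$, contradicting $s<n$.) For the $\AS$ relation, a generator is $D+\bar{D}$, where $\bar{D}$ is $D$ with the cyclic orientation at a node $v_0$ reversed. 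Pick any leg-vertex $w$ of $D$ (one exists since $v_0$ is a node of $D$) and break it; writing $\tilde{\iota}^s_n(D)=D_{=}-D_{\times}$, one has $\tilde{\iota}^s_n(\bar{D})=\bar{D}_{=}-\bar{D}_{\times}$, where $\bar{D}_{=}$ equals $D_{=}$ with the orientation at $v_0$ reversed if $w\neq v_0$, while $\bar{D}_{=}=D_{\times}$ and $\bar{D}_{\times}=D_{=}$ if $w=v_0$ (reversing the cyclic order before breaking interchanges the two $\STU$ terms). In either case $\tilde{\iota}^s_n(D+\bar{D})=(D_{=}+\bar{D}_{=})-(D_{\times}+\bar{D}_{\times})$ is a sum of $\AS$ relations (or $0$), hence vanishes in $\Fs^{s+1}_n(m)_{\Q}$.

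The relations $\STU^2$ and $\hexagon R$ are handled together. By Proposition \ref{prop:STU2_contains_square_relation} and Definition \ref{def:square_and_hexagon_relations}, the submodule $\langle\STU^2,\hexagon R\rangle\subset\Q\Ds^s_n(m)$ is generated by the elements $\overrightarrow{C}=\overrightarrow{F_1F_2}+\dots+\overrightarrow{F_{l-1}F_l}+\overrightarrow{F_lF_1}$, where $(F_1,\dots,F_l)$ is a cycle of length $l\in\{4,6\}$ (so $F_l$ and $F_1$ are adjacent) in some graph of labelled forests $\widetilde{\Fs}(T_1,\dots,T_{s+1})$. Each summand $\overrightarrow{F_iF_{i+1}}\in\Ds^s_n(m)$ carries a distinguished node, the leg-vertex produced by the slide move relating $F_i$ and $F_{i+1}$. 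Breaking precisely that vertex returns the two forests $F_{i+1}$ and $F_i$ with a sign, by the very definition of $\overrightarrow{F_iF_{i+1}}$ as the third term of the $\STU$ relation $F_{i+1}-F_i$; hence $\tilde{\iota}^s_n(\overrightarrow{F_iF_{i+1}})=[F_{i+1}]-[F_i]$ in $\Fs^{s+1}_n(m)_{\Q}$. Summing around the cycle, $\tilde{\iota}^s_n(\overrightarrow{C})=([F_2]-[F_1])+([F_3]-[F_2])+\dots+([F_1]-[F_l])$ telescopes to $0$.

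It remains to treat the $\IHX$ relation, which is the delicate case. Since the submodule $\langle\IHX\rangle$ does not depend on the chosen generating form, take a generator $\rho=I-H+X$ in the standard three-term form, with the three diagrams agreeing outside a disk $\Delta$ containing two nodes $a,b$ and four external edges. If one of those four external edges is a leg, break the corresponding leg-vertex in all three terms; by Remark \ref{rem:HR_is_image_of_IHX}, $\tilde{\iota}^s_n(\rho)$ is then an $\hexagon$ relation, hence $0$ in $\Fs^{s+1}_n(m)_{\Q}$. If instead none of the four external edges is a leg, then neither $a$ nor $b$ is a leg-vertex, so any leg-vertex of $I$ (which exists, as $I$ contains the nodes $a,b$) lies outside $\Delta$, and since $I,H,X$ agree outside $\Delta$ it is a common leg-vertex $w$; breaking $w$ leaves $\Delta$ intact in all three terms, so $\tilde{\iota}^s_n(\rho)=(I_{=}-H_{=}+X_{=})-(I_{\times}-H_{\times}+X_{\times})$ is a difference of two $\IHX$ relations, again $0$. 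This $\IHX$ step is where the real work lies: unlike $\AS$, $\STU^2$ and $\hexagon R$, an $\IHX$ relation need not involve a leg, and when it does not one must move the broken vertex away from the $\IHX$ disk, which is what forces the case distinction and makes the freedom of choice of broken vertex essential. A secondary point requiring care is verifying that breaking the slide-move vertex of $\overrightarrow{F_iF_{i+1}}$ reproduces $F_{i+1}-F_i$ with the correct sign, which is immediate from the definition of the $\overrightarrow{(\cdot)}$ notation but should be spelled out.
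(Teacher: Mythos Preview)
Your proof is correct and follows the same overall strategy as the paper's: verify relation by relation that $\tilde{\iota}^s_n$ vanishes, exploiting the freedom in the choice of leg-vertex to break. The paper organises the verification instead by the number of available nodes (cases $s<n-2$, $s=n-2$, $s=n-1$), but the content is the same for $\1T$, $\AS$ and $\IHX$.

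Where your argument differs is in the treatment of $\STU^2$ and $\hexagon R$: rather than the paper's ``pick a node outside the varying part, else compute directly'' dichotomy, you use Proposition~\ref{prop:STU2_contains_square_relation} to rewrite both as $\overrightarrow{C}$ for a short cycle $C$, break in each summand $\overrightarrow{F_iF_{i+1}}$ the very node created by the slide, and observe that the result telescopes. This is cleaner and more uniform; in particular it handles the case $s=n-1$ for $\hexagon R$ (each term then has a single node, so one cannot break ``elsewhere'') without a separate direct computation, a case the paper's proof does not spell out. The trade-off is that your argument for $\STU^2$ and $\hexagon R$ leans on the cycle formalism of Section~\ref{sec:permutographs_and_graphs_of_forests}, whereas the paper's proof here is more self-contained.
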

\begin{proof}
    This proof is very similar to the proof of Claim 3.4 in \cite{conant2008stu2}.

    We need to show that $\tilde{\iota}^s_n$ vanishes on $\AS,\IHX,\STU^2,\hexagon R$. If $s<n-2$, then forest diagrams in $\Ds_n^s(m)$ have at least three nodes, while the relations $\AS,\IHX,\STU^2,\hexagon R$ involve at most $2$ nodes, hence it suffices to pick another node to break apart, hence the image under $\tilde{\iota}^s_n$ is a difference of sums that belong to the respective submodules.

    For $s<n-1$, the above argument also applies to $\STU^2,\hexagon R,\AS$. For $s=n-2$ and $\IHX$, observe that an $\IHX$ relation is sent to an $\hexagon$ relation.

    For $s=n-1$, there are no $\IHX$ relations, while a direct calculation shows that $\STU^2$ and $\AS$ relations are sent to zero.

    This concludes the proof that $\tilde{\iota}^s_n$ factors through $\Fs^s_n(m)_{\Q}$. The $\FI$ case holds as well: if $D$ has an isolated chord, then so do the two terms in $\tilde{\iota}^s_n(D)$.
\end{proof}

\begin{remark}
    In \cite[Claim 3.4]{conant2008stu2}, the proof restricts to the case $s\neq n-1$ because the $\hexagon$ relation is not taken into account there. This relation appears naturally here by considering graphs of forests and solves that issue.
\end{remark}

The linear maps $\iota^s_n$, $s=1,\dots,n-1$ provide a factorization of the map $\iota_n\colon\Fs^1_n(m)_{\Q}\rightarrow\As_n(m)_{\Q}$, where nodes are broken apart one after the other, instead of all at once
\[\begin{tikzcd}
	{\Fs^1_n(m)_{\Q}} & {\Fs^2_n(m)_{\Q}} & \dots & {\Fs^n_n(m)_{\Q}=\As_n(m)_{\Q}}
	\arrow["{\iota^1_n}", from=1-1, to=1-2]
	\arrow["{\iota^{2}_n}", from=1-2, to=1-3]
	\arrow["{\iota^{n-1}_s}", from=1-3, to=1-4]
	\arrow["\iota_n"',bend right=15, from=1-1, to=1-4]
\end{tikzcd}.\]

\subsubsection{Barycenters of forests.}\label{sec:Barycenters of forests}
Let $T_1,\dots,T_s\in\Ds^T(m)$ be tree diagrams. The set of vertices of the associated graph of forests $\Fs(T_1,\dots,T_s)$ shares some properties with affine spaces. Recall that an \emph{affine space} can be defined as a principal homogeneous space $A$ for the action of the additive group of a vector space $\overrightarrow{A}$, its \emph{direction}. In particular, there is a well-defined map
\begin{equation}
\overrightarrow{\cdot}\colon A\times A\longrightarrow \overrightarrow{A}\colon
(a,b)\longmapsto \overrightarrow{ab}.
\end{equation}
Although $\Fs^{s-1}(m)_{\Q}$ does not act on $\Fs(T_1,\dots,T_s)$, Definition \ref{def:vector_from_F_to_F'} defines an analogous map
\begin{equation}\label{eq:direction_map}
\overrightarrow{\cdot}\colon \Fs(T_1,\dots,T_s)\times \Fs(T_1,\dots,T_s)\longrightarrow\Fs^{(\FI),s-1}(m)_{\Q}\colon (F,F')\longmapsto \overrightarrow{FF'}.
\end{equation}
Consequently, $\Fs(T_1,\dots,T_s)$ inherits some properties of affine spaces. In particular, we can still talk about barycenters of vertices of $\Fs(T_1,\dots,T_s)$.

\begin{definition}
The \emph{barycenter} of forests $F_1,\dots,F_k\in\Fs(T_1,\dots,T_s)$ with coefficients $\lambda_1,\dots,\lambda_k\in\Q$ summing to $\sum_i\lambda_i=1$ is the formal sum $\sum_i\lambda_i F_i$.

Denote by $\Bary(\Fs(T_1,\dots,T_s))$ the set of all barycenters of forests in $\Fs(T_1,\dots,T_s)$. There is a natural inclusion $\Fs(T_1,\dots,T_s)\monic\Bary(\Fs(T_1,\dots,T_s))$ sending a forest $F$ to the formal sum $1\cdot F$.
\end{definition}

\begin{lemma}[Chasles relation]\label{lem:Chasles}
    If $F_0,F_1,F_2\in\Fs(T_1,\dots,T_s)$, then $\overrightarrow{F_0F_2}=\overrightarrow{F_0F_1}+\overrightarrow{F_1F_2}$.
\end{lemma}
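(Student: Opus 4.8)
\emph{Proof plan.} The statement is essentially a formal consequence of the way $\overrightarrow{FF'}$ was defined in Definition \ref{def:vector_from_F_to_F'}: it is the class $\overrightarrow{P}$ of \emph{any} path $P$ from $F$ to $F'$ in $\Fs(T_1,\dots,T_s)$, and by Proposition \ref{prop:any_cycle_in_graph_forests_has_a_multiple_that_lifts} this class does not depend on the choice of $P$ once we work over $\Q$ and mod out by $\langle\AS,\square R,\hexagon R\rangle$. So the plan is simply to exhibit a single path from $F_0$ to $F_2$ that visibly splits through $F_1$, and to use additivity of the edge-sum under concatenation.

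Concretely, the steps I would carry out are as follows. First, choose a path $P$ in $\Fs(T_1,\dots,T_s)$ from $F_0$ to $F_1$ and a path $Q$ from $F_1$ to $F_2$ (such paths exist because $\Fs(T_1,\dots,T_s)$ is connected: any two orderings of the legs differ by a sequence of adjacent transpositions, i.e.\ by a sequence of slide moves). Second, observe that the concatenation $P\ast Q$, obtained by traversing the edges of $P$ and then those of $Q$, is a path from $F_0$ to $F_2$. Third, unwind the definition $\overrightarrow{P\ast Q}=\sum_{e\in P\ast Q}\overrightarrow{e}=\sum_{e\in P}\overrightarrow{e}+\sum_{e\in Q}\overrightarrow{e}=\overrightarrow{P}+\overrightarrow{Q}$ in $\Fs^{(\FI),s-1}(m)_{\Q}$; this is immediate since the edge-sum is literally additive over the list of traversed edges. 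Finally, invoke path-independence (Proposition \ref{prop:any_cycle_in_graph_forests_has_a_multiple_that_lifts}): $\overrightarrow{F_0F_2}=\overrightarrow{P\ast Q}$, $\overrightarrow{F_0F_1}=\overrightarrow{P}$, and $\overrightarrow{F_1F_2}=\overrightarrow{Q}$, whence $\overrightarrow{F_0F_2}=\overrightarrow{F_0F_1}+\overrightarrow{F_1F_2}$.

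There is essentially no hard step here; the content has already been absorbed into Definition \ref{def:vector_from_F_to_F'} and the well-definedness result that precedes it. The only points requiring a word of care are the existence of the paths (connectedness of $\Fs(T_1,\dots,T_s)$, which follows from transitivity of the $\Sy_{n^1}\times\dots\times\Sy_{n^m}$-action as recorded in Section \ref{sec:permutographs_and_graphs_of_forests}) and the harmless fact that two vertices of $\Fs(T_1,\dots,T_s)$ may be joined by several edges, so the computation should be phrased in terms of (directed) edges $e$ and their associated diagrams $\overrightarrow{e}$ rather than in terms of ordered pairs of forests. With those conventions fixed, the chain of equalities above is the whole proof.

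\begin{proof}
    By Proposition \ref{prop:any_cycle_in_graph_forests_has_a_multiple_that_lifts}, the element $\overrightarrow{FF'}\in\Fs^{(\FI),s-1}(m)_{\Q}$ of Definition \ref{def:vector_from_F_to_F'} is independent of the chosen path from $F$ to $F'$. Pick a path $P$ from $F_0$ to $F_1$ and a path $Q$ from $F_1$ to $F_2$ in $\Fs(T_1,\dots,T_s)$ (these exist since the graph of forests is connected, the $\Sy_{n^1}\times\dots\times\Sy_{n^m}$-action on leg orderings being transitive). The concatenation $P\ast Q$ is a path from $F_0$ to $F_2$, and summing the contributions of its directed edges gives
    \begin{equation*}
        \overrightarrow{P\ast Q}=\sum_{e\in P\ast Q}\overrightarrow{e}=\sum_{e\in P}\overrightarrow{e}+\sum_{e\in Q}\overrightarrow{e}=\overrightarrow{P}+\overrightarrow{Q}
    \end{equation*}
    in $\Fs^{(\FI),s-1}(m)_{\Q}$. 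By path-independence, $\overrightarrow{F_0F_2}=\overrightarrow{P\ast Q}$, $\overrightarrow{F_0F_1}=\overrightarrow{P}$ and $\overrightarrow{F_1F_2}=\overrightarrow{Q}$, so $\overrightarrow{F_0F_2}=\overrightarrow{F_0F_1}+\overrightarrow{F_1F_2}$.
\end{proof}
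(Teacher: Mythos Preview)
Your proof is correct and follows essentially the same approach as the paper: choose a path from $F_0$ to $F_2$ that passes through $F_1$, and use path-independence (built into Definition~\ref{def:vector_from_F_to_F'}) together with additivity of the edge-sum under concatenation. The paper's version is simply a one-sentence compression of your argument.
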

\begin{proof}
    By Definition \ref{def:vector_from_F_to_F'}, we can choose any path from $F_0$ to $F_2$ in order to obtain $\overrightarrow{F_0F_2}$. In particular, choosing a path that passes through $F_1$, we get $\overrightarrow{F_0F_2}=\overrightarrow{F_0F_1}+\overrightarrow{F_1F_2}$.
\end{proof}

\begin{lemma}\label{lem:choice_origin_barycenter_irrelevant}
    Let $F_0,F'_0,F_1,\dots,F_k\in\Fs(T_1,\dots,T_s)$ and $\lambda_1,\dots,\lambda_k\in\Q$. If $\sum_i\lambda_i=0$, then $\sum_{i}\lambda_i \overrightarrow{F_0F_i}=\sum_{i}\lambda_i\overrightarrow{F'_0F_i}$ in $\Fs^{(\FI),s-1}(m)_{\Q}$.
\end{lemma}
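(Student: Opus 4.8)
The plan is to reduce everything to the Chasles relation (Lemma~\ref{lem:Chasles}), mimicking the proof of the analogous fact for genuine affine spaces. First I would insert $F_0'$ as an intermediate point in each term: by Lemma~\ref{lem:Chasles}, for every $i$ we have $\overrightarrow{F_0F_i}=\overrightarrow{F_0F_0'}+\overrightarrow{F_0'F_i}$ in $\Fs^{(\FI),s-1}(m)_{\Q}$. This is legitimate precisely because $\overrightarrow{(\cdot)}$ is path-independent (Definition~\ref{def:vector_from_F_to_F'}, justified by Proposition~\ref{prop:any_cycle_in_graph_forests_has_a_multiple_that_lifts}): concatenating any path $F_0\rightsquigarrow F_0'$ with any path $F_0'\rightsquigarrow F_i$ is a valid path computing $\overrightarrow{F_0F_i}$.

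Next I would take the $\lambda_i$-weighted sum of these equalities. Linearity in $\Fs^{(\FI),s-1}(m)_{\Q}$ gives
\[
\sum_i\lambda_i\overrightarrow{F_0F_i}=\Bigl(\sum_i\lambda_i\Bigr)\overrightarrow{F_0F_0'}+\sum_i\lambda_i\overrightarrow{F_0'F_i},
\]
and the hypothesis $\sum_i\lambda_i=0$ annihilates the first summand, leaving $\sum_i\lambda_i\overrightarrow{F_0F_i}=\sum_i\lambda_i\overrightarrow{F_0'F_i}$, which is exactly the claim.

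I do not anticipate a real obstacle here: the statement is a purely formal consequence of Chasles plus the vanishing of the coefficient sum. The only points requiring care are that all identities are taking place in the quotient $\Fs^{(\FI),s-1}(m)_{\Q}=\Q\Ds^{s-1}(m)/\langle\AS,\STU^2,\hexagon R\rangle$ (using $\square R=\STU^2$ from Proposition~\ref{prop:STU2_contains_square_relation}) and over $\Q$, where well-definedness of $\overrightarrow{FF'}$ has already been established; but since Lemma~\ref{lem:Chasles} is stated at exactly that level of generality, invoking it already subsumes all of this, and the remaining argument is one line of linear algebra.
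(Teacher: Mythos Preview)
Your proof is correct and is essentially identical to the paper's: both apply the Chasles relation to insert the other basepoint as an intermediate vertex, sum with weights $\lambda_i$, and use $\sum_i\lambda_i=0$ to kill the extra term. The only cosmetic difference is that the paper inserts $F_0$ into $\overrightarrow{F'_0F_i}$ while you insert $F'_0$ into $\overrightarrow{F_0F_i}$, which amounts to reading the same identity from the other side.
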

\begin{proof}
    By the Chasles relation (Lemma \ref{lem:Chasles}), we immediately get
    \begin{align*}
        {\sum}_{i}\lambda_i\overrightarrow{F'_0F_i}=\underbrace{\paren{{\sum}_{i}\lambda_i}}_{=0}\overrightarrow{F'_0F_0}+{\sum}_{i}\lambda_i\overrightarrow{F_0F_i}.
    \end{align*}
\end{proof}

\begin{definition}\label{def:vector_barycenter}
    Given two barycenters $B=\sum_i\lambda_iF_i$, $B'=\sum_j\lambda'_iF'_i$ in $\Bary(\Fs(F))$, we define the \emph{vector} from $B$ to $B'$ as
    \begin{equation}\label{eq:definition_vector_barycenter}
        \overrightarrow{BB'}\coloneqq\sum_j\lambda'_j \overrightarrow{F_0F'_j}-\sum_i\lambda_i\overrightarrow{F_0F_i}
    \end{equation}
    where $F_0$ is any forest in $\Fs(F)$.
\end{definition}

\begin{proposition}\label{prop:direction_extends_to_barycenters}
    Definition \ref{def:vector_barycenter} is well-defined. More precisely, the map
    \begin{equation}\label{eq:direction_map_barycenters}
    \overrightarrow{\cdot}\colon\Bary(\Fs(T_1,\dots,T_s))\times\Bary(\Fs(T_1,\dots,T_s))\longrightarrow\Fs^{(\FI),s-1}(m)_{\Q}
    \end{equation}
    extends the map (\ref{eq:direction_map}) and the Chasles relation holds for barycenters of forests.
\end{proposition}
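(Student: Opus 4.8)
The plan is to deduce all three assertions of the proposition — independence of the auxiliary base forest $F_0$ in (\ref{eq:definition_vector_barycenter}), the fact that the resulting map extends (\ref{eq:direction_map}), and the Chasles relation for barycenters — from the two preceding lemmas, Lemma \ref{lem:Chasles} and Lemma \ref{lem:choice_origin_barycenter_irrelevant}, using the affine-space-like formalism already set up for $\Fs(T_1,\dots,T_s)$.

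First I would settle well-definedness. Write $B=\sum_i\lambda_iF_i$ and $B'=\sum_j\lambda'_jF'_j$ with all $F_i,F'_j\in\Fs(T_1,\dots,T_s)$ and $\sum_i\lambda_i=\sum_j\lambda'_j=1$. Since the right-hand side of (\ref{eq:definition_vector_barycenter}) is $\Q$-linear in the coefficient tuples $(\lambda_i)$ and $(\lambda'_j)$, it is automatically insensitive to the only ambiguities in presenting a barycenter as a formal sum (merging repeated forests, adding or deleting terms with coefficient $0$), so the real content of well-definedness is independence of the base forest. To check that, regard the forests $F_1,\dots,F_k,F'_1,\dots,F'_{k'}$ as one family carrying coefficients $-\lambda_1,\dots,-\lambda_k,\lambda'_1,\dots,\lambda'_{k'}$; the total coefficient is $-1+1=0$, so Lemma \ref{lem:choice_origin_barycenter_irrelevant} gives
\[
\sum_j\lambda'_j\overrightarrow{F_0F'_j}-\sum_i\lambda_i\overrightarrow{F_0F_i}=\sum_j\lambda'_j\overrightarrow{F'_0F'_j}-\sum_i\lambda_i\overrightarrow{F'_0F_i}
\]
for any two base forests $F_0,F'_0$, which is the required independence.

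For the remaining two points I would use a common base forest throughout. If $B=1\cdot F$ and $B'=1\cdot F'$, taking $F_0=F$ makes (\ref{eq:definition_vector_barycenter}) read $\overrightarrow{BB'}=\overrightarrow{FF'}-\overrightarrow{FF}=\overrightarrow{FF'}$, using $\overrightarrow{FF}=0$ (evaluate Definition \ref{def:vector_from_F_to_F'} on the constant path); this is compatibility with (\ref{eq:direction_map}). For Chasles, given $B,B',B''\in\Bary(\Fs(T_1,\dots,T_s))$, compute $\overrightarrow{BB'}$, $\overrightarrow{B'B''}$ and $\overrightarrow{BB''}$ all with the same base forest $F_0$; the contributions coming from $B'$ appear with opposite signs in the first two and cancel, leaving exactly $\overrightarrow{BB''}$. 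There is no genuine obstacle in this proof — it is a formal consequence of the structure already established — and the only step worth a sentence of care is the bookkeeping in the application of Lemma \ref{lem:choice_origin_barycenter_irrelevant}, namely merging the two families of forests and verifying that their combined coefficients sum to $0$, which is exactly where the normalisations $\sum_i\lambda_i=\sum_j\lambda'_j=1$ are used.
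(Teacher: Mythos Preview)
Your proposal is correct and follows essentially the same route as the paper: well-definedness via Lemma~\ref{lem:choice_origin_barycenter_irrelevant} applied to the merged family with total coefficient $0$, extension by choosing $F_0=F$, and Chasles by computing with a common base forest so that the $B'$-contributions cancel. If anything you are slightly more explicit than the paper (which compresses the Chasles step into the identity $\lambda''_i-\lambda_i=(\lambda''_i-\lambda'_i)+(\lambda'_i-\lambda_i)$ and does not spell out the merging of families for Lemma~\ref{lem:choice_origin_barycenter_irrelevant}).
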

\begin{proof}
    By Lemma \ref{lem:choice_origin_barycenter_irrelevant}, the value of $\overrightarrow{BB'}$ does not depend on the choice of origin $F_0$.

    For barycenters of the form $1\cdot F$ and $1\cdot F'$, we have $\overrightarrow{(1\cdot F)(1\cdot F')}=\overrightarrow{FF'}$, e.g.\ by choosing $F_0=F$. Thus the map $\overrightarrow{\cdot}$ for barycenters indeed extends that for forests.
    
    For three barycenters $B=\sum_i\lambda_iF_i$, $B'=\sum_i\lambda'_iF'_i$ and $B''=\sum_i\lambda''_iF''_i$, the Chasles relation $\overrightarrow{BB''}=\overrightarrow{BB'}+\overrightarrow{B'B''}$ follows by writing $\lambda''_i-\lambda_i=\lambda_i''-\lambda_i'+\lambda_i'-\lambda_i$.
\end{proof}

It follows from Proposition \ref{prop:direction_extends_to_barycenters} that any choice of barycenter $B_0\in\Bary(\Fs(T_1,\dots,T_s))$ defines a map
\begin{equation}        \Fs(T_1,\dots,T_s)\longrightarrow\Fs^{(\FI),s-1}(m)_{\Q}\colon F\longmapsto \overrightarrow{B_0F}
\end{equation}
which sends a forest of size $s$ to a linear combination of forests of size $s-1$. Maps like the above will be crucial in defining sections to the maps $\iota^s_n$. Doing so requires picking a barycenter in each graph of forest, but random choices will not work in general. The barycenters must be consistently chosen.

\begin{definition}\label{def:average_barycenter}
    For $F$ a forest of size $s$ consisting of trees $T_1,\dots,T_s$, define the \emph{average barycenter}, or \emph{average basepoint}, associated to $F$ as
    \begin{equation}\label{eq:average_barycenter}
        (F)_{\avg}\coloneqq\frac{1}{s!}\sum_{\sigma\in\Sy_s}T_{\sigma(1)}\dots T_{\sigma(s)},
    \end{equation}
    i.e.\ the average of all possible ways of stacking the trees of $F$ one above the other.
\end{definition}

Note that $(F)_{\avg}=(F')_{\avg}$ whenever $F$ and $F'$ consist of the same trees, i.e.\ they belong to a common graph of forests.

\subsubsection{Merging trees.}\label{sec:merging trees}
Consider the linear map
\begin{equation}\label{eq:pi_tilde}
\tilde{\pi}^s_n\colon\Q\Ds^s_n(m)\longrightarrow\Fs_n^{(\FI),s-1}(m)_{\Q}\colon F\longmapsto \overrightarrow{(F)_{\avg}F}
\end{equation}
that assigns to a forest $F$ the vector from the average $(F)_{\avg}$ to $F$. It can be thought of as a sort of distance function from the average barycenters.

\begin{lemma}[Diagrammatic $\STU$]\label{lem:diagrammatic_STU}
    For any adjacent forests $F^==\vcenter{\hbox{\includegraphics[height=20pt]{Ressources/permutograph/STU2-satisfied-by-s/term-=.eps}}}$ and $F^{\times}=\vcenter{\hbox{\includegraphics[height=20pt]{Ressources/permutograph/STU2-satisfied-by-s/term-x.eps}}}$ in a graph of forests, we have
    \begin{align*}
        \tilde{\pi}^{s}_{n}\paren{\vcenter{\hbox{\includegraphics[height=25pt]{Ressources/permutograph/STU2-satisfied-by-s/term-=.eps}}}-\vcenter{\hbox{\includegraphics[height=25pt]{Ressources/permutograph/STU2-satisfied-by-s/term-x.eps}}}}=\vcenter{\hbox{\includegraphics[height=25pt]{Ressources/permutograph/STU2-satisfied-by-s/term-Y.eps}}}
    \end{align*}
    where $\vcenter{\hbox{\includegraphics[height=20pt]{Ressources/permutograph/STU2-satisfied-by-s/term-Y.eps}}}-\vcenter{\hbox{\includegraphics[height=20pt]{Ressources/permutograph/STU2-satisfied-by-s/term-=.eps}}}+\vcenter{\hbox{\includegraphics[height=20pt]{Ressources/permutograph/STU2-satisfied-by-s/term-x.eps}}}$ is an $\STU$ relation.
\end{lemma}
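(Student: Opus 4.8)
The plan is to unwind the definitions of $\tilde{\pi}^s_n$ and $\overrightarrow{\cdot}$ and invoke the barycentric Chasles relation; all the genuinely load-bearing work has already been done in setting up the affine-like structure on graphs of forests. First I would observe that, since $F^{=}$ and $F^{\times}$ are related by a single slide move, they consist of the very same collection of trees $T_1,\dots,T_s$, hence lie in a common graph of forests $\Fs(T_1,\dots,T_s)$, and in particular they share the same average barycenter: $(F^{=})_{\avg}=(F^{\times})_{\avg}=:B_0$, because $(F)_{\avg}$ depends only on the multiset of trees composing $F$ and not on how those trees are shuffled (Definition \ref{def:average_barycenter}). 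Using linearity of $\tilde{\pi}^s_n$ together with its defining formula (\ref{eq:pi_tilde}), the left-hand side becomes
\[
    \tilde{\pi}^s_n\!\paren{F^{=}-F^{\times}}=\overrightarrow{(F^{=})_{\avg}F^{=}}-\overrightarrow{(F^{\times})_{\avg}F^{\times}}=\overrightarrow{B_0F^{=}}-\overrightarrow{B_0F^{\times}}.
\]

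Next I would apply the Chasles relation for barycenters of forests (Proposition \ref{prop:direction_extends_to_barycenters}; it also gives antisymmetry $\overrightarrow{B_0F^{\times}}=-\overrightarrow{F^{\times}B_0}$ by taking $F^{\times}$ as both endpoints), which yields $\overrightarrow{B_0F^{=}}-\overrightarrow{B_0F^{\times}}=\overrightarrow{F^{\times}B_0}+\overrightarrow{B_0F^{=}}=\overrightarrow{F^{\times}F^{=}}$. Since $F^{=}$ and $F^{\times}$ are adjacent, Definition \ref{def:vector_from_F_to_F'} permits computing $\overrightarrow{F^{\times}F^{=}}$ along the length-one path given by the single edge $e\colon F^{\times}\rightsquigarrow F^{=}$, so $\overrightarrow{F^{\times}F^{=}}=\overrightarrow{e}$, and the latter is by definition precisely the third ($Y$) term displayed in the statement. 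This establishes the displayed identity.

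Finally, the \emph{moreover} clause is immediate from the construction of $\overrightarrow{e}$: by definition $\overrightarrow{e}=F^{=}-F^{\times}$ in $\As(m)$, i.e. the signed sum $\overrightarrow{e}-F^{=}+F^{\times}$ is exactly an $\STU$ relation. I do not expect any real obstacle here; the only point deserving a moment's care is the equality $(F^{=})_{\avg}=(F^{\times})_{\avg}$, which is what licenses subtracting the two vectors with a common origin and is what makes the telescoping collapse to a single edge.
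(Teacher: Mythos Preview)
Your proof is correct and follows essentially the same route as the paper: observe that $F^{=}$ and $F^{\times}$ share the same average barycenter, expand $\tilde{\pi}^s_n$ via its definition, and collapse the difference of vectors using the Chasles relation to the single-edge vector $\overrightarrow{F^{\times}F^{=}}$. You are slightly more explicit than the paper in invoking the barycentric version of Chasles (Proposition~\ref{prop:direction_extends_to_barycenters}) rather than the plain forest version, which is appropriate since $(F)_{\avg}$ is a barycenter, not a single forest.
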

\begin{proof}
    Since $F^==\vcenter{\hbox{\includegraphics[height=20pt]{Ressources/permutograph/STU2-satisfied-by-s/term-=.eps}}}$ and $F^{\times}=\vcenter{\hbox{\includegraphics[height=20pt]{Ressources/permutograph/STU2-satisfied-by-s/term-x.eps}}}$ belong to the same graph of forests, we have $(F^{=})_{\avg}=(F^{\times})_{\avg}$ and
    \begin{align*}
    \tilde{\pi}^{s}_n(F^{=}-F^{\times})=\tilde{\pi}^{s}_n(F^{=})-\tilde{\pi}^{s}_n(F^{\times})=\overrightarrow{(F^{=})_{\avg}F^{=}}-\overrightarrow{(F^{\times})_{\avg}F^{\times}}=\overrightarrow{F^{\times}F^{=}}=F^Y
    \end{align*}
    by the Chasles relation.
\end{proof}

\begin{proposition}\label{prop:map_pi_descends}
    The map $\tilde{\pi}^s_n\colon\Q\Ds_n^s(m)\rightarrow\Fs_n^{(\FI),s-1}(m)_{\Q}$ factors through $\pi^s_n\colon\Fs^{(\FI),s}_n(m)_{\Q}\rightarrow\Fs^{(\FI),s-1}_n(m)_{\Q}$.
\end{proposition}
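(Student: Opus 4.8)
The plan is to check that $\tilde\pi^s_n$ kills the submodule $\langle(\1T),\AS,\IHX,\STU^2,\hexagon R\rangle\subset\Q\Ds^s_n(m)$, so that it descends to a map $\pi^s_n$ on the quotient $\Fs^{(\FI),s}_n(m)_{\Q}$; this is parallel to the proof of Proposition~\ref{prop:map_iota_descends} (and to \cite[Claim~3.4]{conant2008stu2}), except that the ``affine'' formalism of Section~\ref{sec:Barycenters of forests} together with Lemma~\ref{lem:diagrammatic_STU} does the bookkeeping, and no case distinction on $s$ is needed. Recall from Definition~\ref{def:vector_barycenter} and the Chasles relation that $\tilde\pi^s_n(F)=\overrightarrow{(F)_{\avg}F}=\tfrac{1}{s!}\sum_{\sigma\in\Sy_s}\overrightarrow{(F)_{\sigma}F}$, computed along any choice of paths in the graph of forests $\Fs(F)$, and that $\AS$, $\IHX$, $\STU^2$ already hold in the target $\Fs^{(\FI),s-1}_n(m)_{\Q}$. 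I will treat each family of generating relations among size-$s$ forests in turn.

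For $\STU^2$, write the relation as $F^{=Y}-F^{\times Y}-F^{Y=}+F^{Y\times}$ with all four terms in $\Ds^s_n(m)$; the pairs $(F^{=Y},F^{\times Y})$ and $(F^{Y=},F^{Y\times})$ are each adjacent in a graph of forests, so Lemma~\ref{lem:diagrammatic_STU} gives $\tilde\pi^s_n(F^{=Y}-F^{\times Y})=F^{YY}=\tilde\pi^s_n(F^{Y=}-F^{Y\times})$, where $F^{YY}\in\Ds^{s-1}_n(m)$ is obtained by merging both pairs of legs; hence the relation maps to $0$. For $\hexagon R$, Remark~\ref{rem:HR_is_image_of_IHX} presents a $\hexagon$ relation among size-$s$ forests as the image, under breaking a leg in each of the three terms, of an $\IHX$ relation $Y_I-Y_H+Y_X$ among size-$(s-1)$ forests; since breaking a leg is, by $\STU$, a replacement $Y_\bullet\rightsquigarrow (Y_\bullet)^{=}-(Y_\bullet)^{\times}$ by adjacent size-$s$ forests with $\STU$-third-term $Y_\bullet$, Lemma~\ref{lem:diagrammatic_STU} yields $\tilde\pi^s_n(\hexagon\text{-rel})=Y_I-Y_H+Y_X=0$ in $\Fs^{(\FI),s-1}_n(m)_{\Q}$, because the latter quotient kills $\IHX$.

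For $\AS$ and $\IHX$ I would use that such a relation is supported on a subgraph carrying a local move — flipping a trivalent vertex $v$ for $\AS$, and the $I/H/X$ recombination of one internal edge for $\IHX$ — which does not change the set of leaves of the diagram, hence induces a canonical isomorphism of the relevant graphs of forests intertwining $D\leftrightarrow\bar D$ (resp.\ $X_I\leftrightarrow X_H\leftrightarrow X_X$), the stacked forests $(D)_{\sigma}\leftrightarrow(\bar D)_{\sigma}$, and slide-move edges. Along corresponding oriented edges $e$, the forest diagrams $\overrightarrow{e}$ differ precisely by that local move, since the new trivalent vertex created by a slide move is disjoint from the subgraph supporting the relation; thus $\overrightarrow{e}$ transforms to $-\overrightarrow{e}$ under the $\AS$ flip and the three $\IHX$ versions satisfy $\overrightarrow{e_I}-\overrightarrow{e_H}+\overrightarrow{e_X}=0$, both in $\Fs^{(\FI),s-1}_n(m)_{\Q}$. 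Summing along corresponding paths out of the (common) average basepoint and averaging over $\Sy_s$ then gives $\tilde\pi^s_n(\bar D)=-\tilde\pi^s_n(D)$ and $\tilde\pi^s_n(X_I-X_H+X_X)=0$. Finally, the $\1T$ case (when the $\FI$ relation is present): an isolated chord of $D$ is a whole tree $T_i$ of $D$, so every stacked forest $(D)_{\sigma}$ still has it isolated, and one computes each $\overrightarrow{(D)_{\sigma}F}$ along a path that slides the two leaves of $T_i$ only as an adjacent block; the slide moves not involving $T_i$ contribute third-terms still containing the isolated chord, hence lying in $\langle\1T\rangle$, while each pair of moves carrying another tree's leg across the block contributes two third-terms differing by a node flip, which cancel by $\AS$.

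I expect the $\IHX$ case — hence also the intertwined $\hexagon R$ case — to be the main obstacle: one has to set up the canonical isomorphisms of graphs of forests and check, slide move by slide move, that merging two legs (possibly with one of them a half-edge of the $\IHX$ configuration) leaves the internal $\IHX$ edge, and therefore the $\IHX$ relation among the third-terms, intact; the small $\AS$-cancellation in the $\1T$ case is elementary but still needs an explicit check of the two planar diagrams. Once these local verifications are in place, $\tilde\pi^s_n(\langle(\1T),\AS,\IHX,\STU^2,\hexagon R\rangle)=0$, and the induced map $\pi^s_n\colon\Fs^{(\FI),s}_n(m)_{\Q}\to\Fs^{(\FI),s-1}_n(m)_{\Q}$ is well-defined; the $\FI$ refinement is automatic, since breaking and sliding legs interact with the presence of an isolated chord exactly as in Proposition~\ref{prop:map_iota_descends}.
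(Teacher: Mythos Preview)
Your treatment of $\AS$, $\IHX$, $\hexagon R$ and $\1T$ is essentially the paper's argument. The genuine gap is in $\STU^2$. You claim the pairs $(F^{=Y},F^{\times Y})$ and $(F^{Y=},F^{Y\times})$ are adjacent in a graph of forests, but this is only true when the $\STU^2$ template $F^{YY}$ is a size-$(s-1)$ forest. By Proposition~\ref{prop:STU2_contains_square_relation} (and Conant's definition) the template may instead be a size-$s$ Feynman diagram with a single cycle passing through both distinguished nodes. In that ``single tree'' case the two legs at site~$\alpha$ in $F^{=Y}$ are the two ends of the broken cycle and hence lie on the \emph{same} tree, so swapping them is not a slide move and Lemma~\ref{lem:diagrammatic_STU} does not apply; the would-be merged diagram $F^{YY}$ has a cycle and is not even an element of $\Fs^{(\FI),s-1}_n(m)_{\Q}$.

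The paper isolates precisely this case and handles it by the parallel-paths method you use for $\AS$ and $\IHX$, now with four paths. The subtlety is that when a foreign leg slides across site~$\alpha$, that site carries $=$ or $\times$ (two legs) in two of the four terms but $Y$ (one leg) in the other two, so the slide contributes two edges to two of the paths and one edge to the others; the resulting six size-$(s-1)$ third-terms are shown to cancel using \emph{both} $\STU^2$ and $\hexagon R$ in the target. So, contrary to your expectation, the main obstacle is not $\IHX$ but this second $\STU^2$ case, and it is exactly here that the $\hexagon$-relation in the target is genuinely needed.
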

\begin{proof}
    We need to show that $\tilde{\pi}^s_n$ vanishes on $(\1T),\AS,\IHX,\STU^2$ and $\hexagon R$. The argument is similar to \cite[Claim 3.6]{conant2008stu2}.

    Start with an $\AS$ relation $F+F'$ in $\Ds^s_n(m)$. We can write $(F^{(')})_{\avg}=\sum_{\sigma}\lambda_{\sigma}F^{(')}_{\sigma}$ where $F_{\sigma}+F'_{\sigma}$ are $\AS$ relations (the notation $(')$ indicates that we can read it with or without~$'$). Pick a path $F_{\sigma}\overset{e_0}{\rightsquigarrow}\dots\overset{e_{l-1}}{\rightsquigarrow}F$ of leg slides in $\Fs(F)$. Then we can follow the parallel path $F'_{\sigma}\overset{e'_0}{\rightsquigarrow}\dots\overset{e'_{l-1}}{\rightsquigarrow}F'$ in $\Fs(F')$, obtained by performing the exact same leg slides. Notice that for each $j$, the origin and extremities of $e_j$ and $e'_j$ only differ by the cyclic ordering of a node, in the same trees as for $F$ and $F'$. Consequently, $\overrightarrow{e_j}+\overrightarrow{e'_j}$ is an $\AS$ relation for each $j$ and $\overrightarrow{F_{\sigma}F}+\overrightarrow{F'_{\sigma}F'}$ is a sum of those. In particular,
    \begin{align*}
        \overrightarrow{(F)_{\avg}F}+\overrightarrow{(F')_{\avg}F'}={\sum}_{\sigma}\lambda_{\sigma}(\overrightarrow{F_{\sigma}F}+\overrightarrow{F'_{\sigma}F'})=0.
    \end{align*}

    For an $\IHX$ relation $F^I+F^H+F^X$, the argument is the same. As above, picking parallel paths from $F^{\bullet}_{\sigma}$ to $F^{\bullet}$, for $\bullet\in\{I,H,X\}$, we obtain
    $$\overrightarrow{(F^I)_{\avg}F^I}+\overrightarrow{(F^H)_{\avg}F^H}+\overrightarrow{(F^X)_{\avg}F^X}=0$$
    since it is a sum of $\IHX$ relations.

    Unlike $\AS$ and $\IHX$ which happen far from the strands, $\STU^2$ happens close to the strands and we need to distinguish two cases. If the $\STU^2$ relation $F^{Y=}-F^{Y\times}-F^{=Y}+F^{\times Y}$ involves two or three trees, then the legs forming $=$ or $\times$ must belong to distinct trees\footnote{Otherwise, joining them in the term with $Y$ would form a cycle.} Then, two applications of the diagrammatic $\STU$ (Lemma \ref{lem:diagrammatic_STU}) yield
    \begin{align*}
        \tilde{\pi}^{s}_{n}(F^{Y=}-F^{Y\times})=F^{YY}=\tilde{\pi}^{s}_{n}(F^{=Y}-F^{\times Y}),
    \end{align*}
    hence the $\STU^2$ relation is satisfied by $\tilde{\pi}^s_n$ in this case. Let us turn to the second case, where the $\STU^2$ relation involves a single tree, i.e.\ all the legs involved belong to the same tree. We argue as for the $\AS$ and $\IHX$ relations, by picking four parallel paths of slide moves from $F^{\bullet}_{\sigma}$ to $F^{\bullet}$, for $\bullet\in\{Y\!=,Y\times,=\!Y,\times Y\}$. More precisely, call the places involved in the $\STU^2$ relation $\alpha$ and $\beta$
    \begin{align*}
        \underbrace{\vcenter{\hbox{\includegraphics[height=25pt]{Ressources/permutograph/STU2-satisfied-by-s/term-=.eps}}}}_{\alpha}\underbrace{\vcenter{\hbox{\includegraphics[height=25pt]{Ressources/permutograph/STU2-satisfied-by-s/term-Y.eps}}}}_{\beta}-\vcenter{\hbox{\includegraphics[height=25pt]{Ressources/permutograph/STU2-satisfied-by-s/term-x.eps}}}\;\vcenter{\hbox{\includegraphics[height=25pt]{Ressources/permutograph/STU2-satisfied-by-s/term-Y.eps}}}-\vcenter{\hbox{\includegraphics[height=25pt]{Ressources/permutograph/STU2-satisfied-by-s/term-Y.eps}}}\;\vcenter{\hbox{\includegraphics[height=25pt]{Ressources/permutograph/STU2-satisfied-by-s/term-=.eps}}}+\vcenter{\hbox{\includegraphics[height=25pt]{Ressources/permutograph/STU2-satisfied-by-s/term-Y.eps}}}\;\vcenter{\hbox{\includegraphics[height=25pt]{Ressources/permutograph/STU2-satisfied-by-s/term-x.eps}}}.
    \end{align*}
    A leg sliding across site $\alpha$ induces one or two edges in the four parallel paths, depending on whether site $\beta$ contains a ``$Y$'', ``$=$'' or ``$\times$'' configuration. The corresponding terms in $\sum_{\bullet}\overrightarrow{F^{\bullet}_{\sigma}F^{\bullet}}$ add up to zero by $\STU^2$ and $\hexagon R$:

    \begin{align*}
        \vcenter{\hbox{\includegraphics[height=25pt]{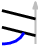}}}\vcenter{\hbox{\includegraphics[height=25pt]{Ressources/permutograph/STU2-satisfied-by-s/term-Y.eps}}}+\vcenter{\hbox{\includegraphics[height=25pt]{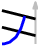}}}\vcenter{\hbox{\includegraphics[height=25pt]{Ressources/permutograph/STU2-satisfied-by-s/term-Y.eps}}}-\vcenter{\hbox{\includegraphics[height=25pt]{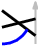}}}\vcenter{\hbox{\includegraphics[height=25pt]{Ressources/permutograph/STU2-satisfied-by-s/term-Y.eps}}}-\vcenter{\hbox{\includegraphics[height=25pt]{Ressources/permutograph/STU2-satisfied-by-s/term-xbottom.eps}}}\vcenter{\hbox{\includegraphics[height=25pt]{Ressources/permutograph/STU2-satisfied-by-s/term-Y.eps}}}\underbrace{-\vcenter{\hbox{\includegraphics[height=25pt]{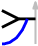}}}\vcenter{\hbox{\includegraphics[height=25pt]{Ressources/permutograph/STU2-satisfied-by-s/term-=.eps}}}+\vcenter{\hbox{\includegraphics[height=25pt]{Ressources/permutograph/STU2-satisfied-by-s/term-Y+.eps}}}\vcenter{\hbox{\includegraphics[height=25pt]{Ressources/permutograph/STU2-satisfied-by-s/term-x.eps}}}}_{\overset{\STU^2}{=}-\vcenter{\hbox{\includegraphics[height=25pt]{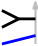}}}\vcenter{\hbox{\includegraphics[height=25pt]{Ressources/permutograph/STU2-satisfied-by-s/term-Y.eps}}}+\vcenter{\hbox{\includegraphics[height=25pt]{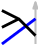}}}\vcenter{\hbox{\includegraphics[height=25pt]{Ressources/permutograph/STU2-satisfied-by-s/term-Y.eps}}}}\overset{\hexagon R}{=}0.
    \end{align*}

    When a leg slides across site $\beta$, the argument is the same. Thus $\tilde{\pi}^s_n$ satisfies $\STU^2$.

    It remains to show that $\hexagon$-relations are satisfied. This immediately follows from three applications of the diagrammatic $\STU$ Lemma (Lemma \ref{lem:diagrammatic_STU}) and the $\IHX$ relation.

    This concludes the proof that $\tilde{\pi}^s_n$ factors through $\Fs^s_n(m)_{\Q}$.

    In the $\FI$ case, we need to check that $\tilde{\pi}^s_n$ vanishes on $\1T$. Let $F\in\Ds^s_n(m)$ be a forest diagram with an isolated chord. For $\sigma\in\Sy_{s}$, the element $\overrightarrow{(F)_{\sigma}F}$ is a sum of diagrams in which this isolated chord remains, together with diagrams coming from sliding that isolated chord across another leg. Those terms of the latter type vanish in pairs by $\AS$. This also follows from the commutativity property, shown in the proof of \cite[Lemma 3.1]{bar1995vassiliev}.
\end{proof}

\begin{proposition}\label{prop:pi_section_of_iota}
For each $1\leq s<n$, the map $\pi^{s+1}_n$ is a section of $\iota^s_n$, i.e.\ $\pi^{s+1}_n\circ\iota^s_n=\id_{\Fs_n^{(\FI),s}(m)_{\Q}}$. In particular, $\iota^s_n$ is injective.
\end{proposition}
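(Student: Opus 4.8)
The plan is to verify the identity $\pi^{s+1}_n\circ\iota^s_n=\id$ directly on generators, i.e.\ on classes $[F]$ of size-$s$ forest diagrams, since $\Fs^{(\FI),s}_n(m)_{\Q}$ is generated by such classes and both maps are linear. First I would unwind the definition of $\iota^s_n$: choosing a trivalent vertex $v$ of $F$ adjacent to a strand, we have $\iota^s_n([F])=[F^==-F^\times]$ where $F^=$ and $F^\times$ are the two size-$(s+1)$ forest diagrams obtained by resolving $v$ via $\STU$ (this is legitimate for any choice of $v$ by Proposition \ref{prop:map_iota_descends}). The combinatorial point I would then make explicit is that resolving a strand-adjacent trivalent vertex replaces the tree of $F$ containing $v$ by its two halves, each attached to a new leg on the strand, and leaves every other tree of $F$ untouched. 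Hence $F^=$ and $F^\times$ are built from the same collection of trees, so they are vertices of one common graph of forests $\Fs(F^=)=\Fs(F^\times)$, and in fact they are adjacent there, joined precisely by the slide move exchanging the two new legs. In particular $(F^=)_{\avg}=(F^\times)_{\avg}$.

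Next I would compute $\pi^{s+1}_n(\iota^s_n([F]))$ using that $\pi^{s+1}_n$ is induced by $\tilde{\pi}^{s+1}_n$ (Proposition \ref{prop:map_pi_descends}), with $\tilde{\pi}^{s+1}_n(G)=\overrightarrow{(G)_{\avg}G}$. By linearity and the equality of average barycenters above, $\pi^{s+1}_n(\iota^s_n([F]))=\overrightarrow{(F^=)_{\avg}F^=}-\overrightarrow{(F^=)_{\avg}F^\times}$, which by the Chasles relation for barycenters (Proposition \ref{prop:direction_extends_to_barycenters}) equals $\overrightarrow{F^\times F^=}$. But $\overrightarrow{F^\times F^=}$ is exactly the diagram $F^Y$ associated with the edge joining $F^\times$ to $F^=$, namely $[F]$ itself; this is precisely the content of the Diagrammatic $\STU$ Lemma (Lemma \ref{lem:diagrammatic_STU}) applied to the adjacent size-$(s+1)$ pair $F^=,F^\times$, so I would simply invoke it. This gives $\pi^{s+1}_n\circ\iota^s_n=\id_{\Fs^{(\FI),s}_n(m)_{\Q}}$, and injectivity of $\iota^s_n$ follows immediately from the existence of a left inverse.

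I do not anticipate a genuine obstacle, since the real work has been front-loaded into Lemmas \ref{lem:Chasles} and \ref{lem:diagrammatic_STU} and Propositions \ref{prop:map_iota_descends}--\ref{prop:map_pi_descends}. The one step deserving care, which I would spell out rather than gesture at, is the claim that $F^=$ and $F^\times$ lie in a common graph of forests: one must check that resolving a strand-adjacent trivalent vertex disconnects exactly one tree into exactly two and alters the forest in no other way, so that the two resulting size-$(s+1)$ forests differ only by the order of two legs on a single strand. The sign bookkeeping (ensuring $\overrightarrow{F^\times F^=}=+F^Y$, with orientations, rather than $-F^Y$) is the other delicate point, but it is already handled inside Lemma \ref{lem:diagrammatic_STU}, so citing that lemma suffices.
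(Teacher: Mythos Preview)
Your proof is correct and follows essentially the same approach as the paper: apply the definition of $\iota^s_n$ to a forest diagram $F$ to obtain $F^=-F^\times$, then invoke the Diagrammatic $\STU$ Lemma (Lemma \ref{lem:diagrammatic_STU}) to conclude that $\pi^{s+1}_n(F^=-F^\times)=F$. You spell out in more detail the internals of that lemma (common graph of forests, equal average barycenters, Chasles relation), which is slightly redundant since you also cite it, but the argument is sound and matches the paper's one-line computation.
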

\begin{proof}
    Given a size $s$ and degree $n$ forest diagram $F=\vcenter{\hbox{\includegraphics[height=20pt]{Ressources/permutograph/STU2-satisfied-by-s/term-Y.eps}}}$ (represented by one of its nodes adjacent to a strand), we have $\iota^{s}_n\paren{\vcenter{\hbox{\includegraphics[height=20pt]{Ressources/permutograph/STU2-satisfied-by-s/term-Y.eps}}}}=\vcenter{\hbox{\includegraphics[height=20pt]{Ressources/permutograph/STU2-satisfied-by-s/term-=.eps}}}-\vcenter{\hbox{\includegraphics[height=20pt]{Ressources/permutograph/STU2-satisfied-by-s/term-x.eps}}}$ and then
    \begin{align*}
        \pi^{s+1}_n\circ\iota^{s}_n(F)=\pi^{s+1}_n\circ\iota^{s}_n\paren{\vcenter{\hbox{\includegraphics[height=25pt]{Ressources/permutograph/STU2-satisfied-by-s/term-Y.eps}}}}=\pi^{s+1}_n\paren{\vcenter{\hbox{\includegraphics[height=25pt]{Ressources/permutograph/STU2-satisfied-by-s/term-=.eps}}}-\vcenter{\hbox{\includegraphics[height=25pt]{Ressources/permutograph/STU2-satisfied-by-s/term-x.eps}}}}=\vcenter{\hbox{\includegraphics[height=25pt]{Ressources/permutograph/STU2-satisfied-by-s/term-Y.eps}}}=F
    \end{align*}
    by the diagrammatic $\STU$ Lemma (Lemma \ref{lem:diagrammatic_STU}).
\end{proof}

\subsubsection{From \texorpdfstring{$\Fs^k(m)_{\Q}$}{Fk(m)Q} to \texorpdfstring{$F^k\As(m)_{\Q}$}{FkA(m)Q}.}
Combining the results of Sections \ref{sec:Primitive filtration of forest diagrams} and \ref{sec:merging trees}, we obtain the following theorem.

\begin{theorem}\label{thm:presentation_primitive_filtration}
    For each $1\leq k\leq n$, there is a natural linear isomorphism
    \begin{equation}\label{eq:presentation_primitive_filtration}
        \Fs_n^{(\FI),k}(m)_{\Q}\overset{\cong}{\longrightarrow}F^k\As^{(\FI)}_n(m)_{\Q}.
    \end{equation}
    More precisely, we have the following isomorphism of filtrations (with $\FI$ as well):
    \begin{equation}\label{diag:comparison_filtrations_concrete}
\begin{tikzcd}
	{\Fs^1_n(m)_{\Q}} & {\Fs^2_n(m)_{\Q}} & \dots & {\Fs^{n-1}_n(m)_{\Q}} & {\Fs^n_n(m)_{\Q}=\As_n(m)_{\Q}} \\
	{F^1\As_n(m)_{\Q}} & {F^2\As_n(m)_{\Q}} & \dots & {F^{n-1}\As_n(m)_{\Q}} & {F^n\As_n(m)_{\Q}=\As_n(m)_{\Q}}
	\arrow["{i_n^1}"', hook, from=2-1, to=2-2]
	\arrow["{i^2_n}"', hook, from=2-2, to=2-3]
	\arrow["{\iota_n^1}", hook, from=1-1, to=1-2]
	\arrow["{\iota_n^2}", hook, from=1-2, to=1-3]
	\arrow["\cong"{description}, from=1-1, to=2-1]
	\arrow["\cong"{description}, from=1-2, to=2-2]
	\arrow[equal, from=1-5, to=2-5]
	\arrow["{\iota_n^{n-2}}", hook, from=1-3, to=1-4]
	\arrow["{\iota_n^{n-2}}", hook, from=1-4, to=1-5]
	\arrow["{i_n^{n-2}}"', hook, from=2-3, to=2-4]
	\arrow["{i_n^{n-1}}"', hook, from=2-4, to=2-5]
	\arrow["\cong"{description}, from=1-4, to=2-4]
\end{tikzcd},
    \end{equation}
    under which the sections $s^k_n$ and $\pi^k_n$ coincide.
\end{theorem}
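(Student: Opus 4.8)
The plan is to realise the vertical maps of (\ref{diag:comparison_filtrations_concrete}) as the tautological maps ``a diagram maps to itself'', to verify the two required compatibilities by direct computation, and then to upgrade them to isomorphisms by a downward induction on $k$ that plays the sections $\pi^{k}_n$ of Proposition \ref{prop:pi_section_of_iota} against the sections $s^{k}_n$ built in the proof of Theorem \ref{thm:comparison_primitive_and_size_filtration}. Concretely, for each $1\le k\le n$ I would define $\phi^k_n\colon\Fs_n^{(\FI),k}(m)_{\Q}\to F^k\As^{(\FI)}_n(m)_{\Q}$ as the linear map induced by sending a size $k$ forest diagram to its own class in $\As_n(m)_{\Q}$, which lies in $F^k\As_n(m)_{\Q}$ by definition of the size filtration. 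To see this descends to the quotient defining $\Fs_n^{(\FI),k}(m)_{\Q}$, one checks that $\1T,\AS,\IHX$ already hold in $\As^{(\FI)}_n(m)_{\Q}$, while every generator of $\STU^2$ is a difference of two $\STU$ relations and every generator of $\hexagon R$ has the form $\overrightarrow{C}=\sum_i\overrightarrow{e_i}$ for a closed cycle $C$ in a graph of forests; since $\overrightarrow{e}=F'-F$ in $\As(m)$ for an edge $e\colon F\rightsquigarrow F'$, both kinds of generators vanish identically in $\As_n(m)_{\Q}$, and in the $\FI$ case $\1T$ relations vanish as well. The map $\phi^k_n$ is then natural in the sense of the theorem, being ``diagram to itself''.

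Next I would check the two compatibilities. For the horizontal squares, let $F$ be a size $k$ forest with $k<n$: by (\ref{eq:definition_of_map_iota}) the map $\iota^k_n$ breaks a node of $F$ adjacent to a strand to produce $F^{=}-F^{\times}$, and $\STU$ gives $F^{=}-F^{\times}=F$ in $\As_n(m)_{\Q}$, whence $\phi^{k+1}_n\circ\iota^k_n=i^k_n\circ\phi^k_n$. For the section squares, I would evaluate on a size $k+1$ forest $F$ with trees $T_1,\dots,T_{k+1}$: by definition $\pi^{k+1}_n(F)=\tfrac{1}{(k+1)!}\sum_{\sigma\in\Sy_{k+1}}\overrightarrow{(F)_{\sigma}F}$, and for any path $P$ realising $\overrightarrow{(F)_{\sigma}F}$ in $\Fs(F)$ the element $\phi^k_n(\overrightarrow{P})=\sum_i\overrightarrow{e_i}$ telescopes to $F-(F)_{\sigma}$ in $\As_n(m)_{\Q}$, again using $\overrightarrow{e}=F'-F$ in $\As(m)$. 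Comparing with the description (\ref{eq:description_s_section}) of $s^{k+1}_n(F)$ yields $\phi^k_n\circ\pi^{k+1}_n=s^{k+1}_n\circ\phi^{k+1}_n$. Both identities then extend to all of $\Fs_n^{(\FI),k}(m)_{\Q}$ by linearity; note that they are computations independent of whether $\phi^k_n$ is yet known to be an isomorphism.

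I would then show each $\phi^k_n$ is an isomorphism by downward induction on $k$. The base case $k=n$ is Remark \ref{rem:simpler_relations_in_many_cases}: on chord diagrams $\AS,\IHX,\hexagon R$ are vacuous and $\STU^2$ becomes $\4T$, so $\phi^n_n$ is exactly the identification $\Fs_n^{(\FI),n}(m)_{\Q}\cong\As^{(\FI)}_n(m)_{\Q}=F^n\As^{(\FI)}_n(m)_{\Q}$. For the inductive step, assume $\phi^{k+1}_n$ is an isomorphism. Then $i^k_n\circ\phi^k_n=\phi^{k+1}_n\circ\iota^k_n$ is injective (because $\iota^k_n$ is injective by Proposition \ref{prop:pi_section_of_iota}), so $\phi^k_n$ is injective. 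For surjectivity, given $y\in F^k\As_n(m)_{\Q}$ set $x\coloneqq(\phi^{k+1}_n)^{-1}(i^k_n(y))$; then
\[
\phi^k_n(\pi^{k+1}_n(x))=s^{k+1}_n(\phi^{k+1}_n(x))=s^{k+1}_n(i^k_n(y))=y,
\]
where the last equality is the fact, recorded in the proof of Theorem \ref{thm:comparison_primitive_and_size_filtration}, that $s^{k+1}_n$ restricts to the identity on $F^k\As_n(m)_{\Q}$. Assembling the $\phi^k_n$ for $k=1,\dots,n$ produces the isomorphism of filtrations (\ref{diag:comparison_filtrations_concrete}), and the two compatibilities established above are exactly the assertions that it intertwines the inclusions $\iota^k_n,i^k_n$ and the sections $\pi^k_n,s^k_n$. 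The $\FI$ case runs identically.

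The step I expect to be the main obstacle is the section square $\phi^k_n\circ\pi^{k+1}_n=s^{k+1}_n\circ\phi^{k+1}_n$: its two sides come from genuinely different formalisms --- $\pi^{k+1}_n$ is defined through paths in graphs of forests and the direction map $\overrightarrow{\cdot}$ of Definition \ref{def:vector_from_F_to_F'}, whereas $s^{k+1}_n$ is defined through the iterated reduced comultiplication $\overline{\Delta}^{k}$ and multiplication --- so one must carefully unwind both descriptions to recognise that, after applying $\STU$, the two resulting telescoping sums of smaller forests literally agree. The well-definedness of $\phi^k_n$ on the $\STU^2$ and $\hexagon R$ generators is the other point that deserves care, although it reduces to the same telescoping observation.
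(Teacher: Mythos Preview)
Your proof is correct and follows essentially the same approach as the paper: define the vertical maps $\phi^k_n$ as ``diagram to itself'', check that the defining relations of $\Fs_n^{(\FI),k}(m)_{\Q}$ vanish in $\As_n^{(\FI)}(m)_{\Q}$, verify commutativity with the $\iota^k_n$ and $i^k_n$, and deduce injectivity of $\phi^k_n$ from injectivity of the $\iota$'s (Proposition \ref{prop:pi_section_of_iota}).

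The one point of divergence is surjectivity. The paper obtains it in one line: $F^k\As_n(m)_{\Q}$ is by definition generated by forests of size $\le k$, hence (using $\STU$ to split smaller forests) by forests of size exactly $k$, so the natural map from $\Q\Ds^k_n(m)$ is already onto and $\phi^k_n$ is surjective by construction. You instead run a downward induction that uses the section compatibility $\phi^k_n\circ\pi^{k+1}_n=s^{k+1}_n\circ\phi^{k+1}_n$ to produce preimages. This works, but it is more elaborate than necessary, and it elevates the section compatibility to a load-bearing role it does not have in the paper (where that compatibility is recorded at the end as an immediate comparison of the formulas (\ref{eq:description_s_section}) and (\ref{eq:pi_tilde})). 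In particular, the step you flag as the ``main obstacle'' is in fact the routine telescoping you already carried out; the paper treats it in a single sentence.
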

\begin{proof}
    We omit $\FI$. By definition, $F^k\As_n(m)_{\Q}$ is generated by size $k$ forests in $\As_n(m)_{\Q}$, i.e.\ there is a linear surjection $\Q\Ds^k(m)\epic F^k\As_n(m)_{\Q}$. The relations $\AS,\IHX,\STU^2,\hexagon R$ in $\Q\Ds^k(m)$ are all consequences of the $\AS,\IHX,\STU$ relations that hold in $\As_n(m)_{\Q}$, hence this surjection descends to a surjection
    $$\Fs_n^k(m)_{\Q}\epic F^k\As_n(m)_{\Q}.$$
    Notice that those surjections, for $1\leq k\leq n$, combine into the vertical maps in (\ref{diag:comparison_filtrations_concrete}) and commute with the maps $i^k_n$ and $\iota^k_n$. By Proposition \ref{prop:pi_section_of_iota}, the maps $\iota$ are injective, hence so are the vertical maps.

    To see that the sections $s^k_n$ and $\pi^k_{n}$ coincide, compare (\ref{eq:description_s_section}) with (\ref{eq:pi_tilde}) and Definition \ref{def:average_barycenter} of $(\cdot)_{\avg}$.
\end{proof}

\subsubsection{The Lie algebra of trees.}\label{sec:The Lie algebra of trees}
From the presentation of the primitive filtration of $\As_n(m)_{\Q}$ in Theorem \ref{thm:presentation_primitive_filtration}, we directly obtain a concrete presentation of the primitive Lie algebra $\Prim(\As(m)_{\Q})$.
\begin{theorem}\label{thm:presentation_primitive_Lie_algebra}
The primitive Lie algebra $\Prim(\As^{(\FI)}(m)_{\Q})$ is naturally isomorphic to the \emph{Lie algebra of trees}
\begin{equation}
	\Ls^{(\FI)}(m)_{\Q}\coloneqq\Fs^{(\FI),1}(m)_{\Q}=\faktor{\Q\Ds^T(m)}{\langle(\1T),\AS,\IHX,\STU^2\rangle}
\end{equation}
endowed with the bracket
\begin{equation}\label{eq:bracket_Lie_algebra_of_trees}
\begin{split}
    [,]\colon\Ls^{(\FI)}(m)_{\Q}\times \Ls^{(\FI)}(m)_{\Q}&\longrightarrow\Ls^{(\FI)}(m)_{\Q}\\
    (T,T')&\longmapsto [T,T']\coloneqq \overrightarrow{(T'\cdot T)(T\cdot T')}
\end{split}
\end{equation}
for any $T,T'\in\Ds^T(m)$ and extended linearly.
\end{theorem}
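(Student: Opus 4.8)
The plan is to observe that the underlying \emph{linear} isomorphism has already been produced: taking $k=1$ in Theorem~\ref{thm:presentation_primitive_filtration} and combining it with Theorem~\ref{thm:comparison_primitive_and_size_filtration} gives a natural linear isomorphism
\begin{equation*}
    \iota\colon\Ls^{(\FI)}(m)_{\Q}=\Fs^{(\FI),1}(m)_{\Q}\xrightarrow{\ \cong\ }F^1\As^{(\FI)}_{\geq 1}(m)_{\Q}=\Prim(\As^{(\FI)}(m)_{\Q})
\end{equation*}
sending each tree diagram to itself. So all that is left is to upgrade $\iota$ to an isomorphism of Lie algebras, where $\Prim(\As^{(\FI)}(m)_{\Q})$ carries the commutator bracket $[x,y]=xy-yx$ that it inherits as a Lie subalgebra of $\As^{(\FI)}(m)_{\Q}$ (the commutator of two primitive elements is again primitive), and the source carries the bracket \eqref{eq:bracket_Lie_algebra_of_trees}.

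Next I would check that \eqref{eq:bracket_Lie_algebra_of_trees} is exactly the commutator bracket transported through $\iota$. For tree diagrams $T,T'\in\Ds^T(m)$, the vertical stackings $T\cdot T'$ and $T'\cdot T$ are size $2$ forest diagrams built from the same two trees $T,T'$, hence are both vertices of the graph of forests $\Fs(T,T')$, so $\overrightarrow{(T'\cdot T)(T\cdot T')}\in\Fs^{(\FI),1}(m)_{\Q}=\Ls^{(\FI)}(m)_{\Q}$ is defined by Definition~\ref{def:vector_from_F_to_F'}. Since along any path in $\Fs(T,T')$ each directed edge $e$ satisfies $\overrightarrow{e}=(\text{head of }e)-(\text{tail of }e)$ in $\As^{(\FI)}(m)_{\Q}$ (by construction of $\overrightarrow{e}$ as the third term of an $\STU$ relation), applying $\iota$ and summing telescopically gives
\begin{equation*}
    \iota\paren{\overrightarrow{(T'\cdot T)(T\cdot T')}}=T\cdot T'-T'\cdot T\qquad\text{in }\As^{(\FI)}(m)_{\Q}.
\end{equation*}
Tree diagrams are connected, hence primitive, so $T\cdot T'-T'\cdot T=[\iota(T),\iota(T')]$, and this lies in $\Prim(\As^{(\FI)}(m)_{\Q})$. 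Thus $\iota$ intertwines the two brackets on pairs of generators.

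Finally I would conclude by transport of structure: since the commutator of $\As^{(\FI)}(m)_{\Q}$ is bilinear and preserves the subspace $\Prim(\As^{(\FI)}(m)_{\Q})$, and $\iota$ is a linear isomorphism onto that subspace, the formula $(x,y)\mapsto\iota^{-1}\!\paren{[\iota(x),\iota(y)]}$ is a well-defined bilinear Lie bracket on $\Ls^{(\FI)}(m)_{\Q}$; by the previous paragraph it agrees on tree diagrams with $(T,T')\mapsto\overrightarrow{(T'\cdot T)(T\cdot T')}$. This simultaneously shows that \eqref{eq:bracket_Lie_algebra_of_trees} is well-defined (it descends through $\langle(\1T),\AS,\IHX,\STU^2\rangle$ and is bilinear), that it is antisymmetric and satisfies the Jacobi identity, and that $\iota$ is an isomorphism of graded Lie algebras; naturality and compatibility with the grading are inherited from $\iota$.

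The only genuine subtlety, hence what I regard as the main obstacle, is the well-definedness of \eqref{eq:bracket_Lie_algebra_of_trees} — that $\overrightarrow{(T'\cdot T)(T\cdot T')}$ depends only on the classes of $T$ and $T'$ modulo the relations and is bilinear. The plan above avoids checking this head-on by reading it off from the identification with the commutator in $\As^{(\FI)}(m)_{\Q}$, where it is automatic. As a sanity check, when $T=T'$ the forest $T\cdot T$ is a single vertex of $\Fs(T,T)$ and $\overrightarrow{(T\cdot T)(T\cdot T)}$ is the class of a cycle, which vanishes over $\Q$ by Proposition~\ref{prop:any_cycle_in_graph_forests_has_a_multiple_that_lifts}, consistently with $[T,T]=0$.
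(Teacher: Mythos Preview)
Your proposal is correct and follows essentially the same approach as the paper: obtain the linear isomorphism $\iota$ from Theorems~\ref{thm:presentation_primitive_filtration} and~\ref{thm:comparison_primitive_and_size_filtration}, then observe that $\iota\bigl(\overrightarrow{(T'\cdot T)(T\cdot T')}\bigr)=T\cdot T'-T'\cdot T$ by the telescoping $\STU$ interpretation of $\overrightarrow{(\cdot)}$, so the brackets match. Your explicit transport-of-structure argument for well-definedness of~\eqref{eq:bracket_Lie_algebra_of_trees} is a welcome expansion of what the paper leaves implicit.
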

\begin{proof}
Under the isomorphism of filtrations from Theorem \ref{thm:presentation_primitive_filtration} and by Theorem \ref{thm:comparison_primitive_and_size_filtration}, we have an isomorphism of $\Q$-modules (we omit $(\FI)$) $$\Ls(m)_{\Q}\cong F^1\As_{\geq 1}(m)_{\Q}\cong\Prim(\As(m)_{\Q})$$
and $\overrightarrow{(T'\cdot T)(T\cdot T')}$ in $\Ls(m)_{\Q}$ corresponds to $T\cdot T'-T'\cdot T$ in $Prim(\As(m)_{\Q})$ by definition of $\overrightarrow{(\cdot)}$, hence $\Ls(m)_{\Q}\cong\Prim(\As(m)_{\Q})$ as Lie algebras as well. This concludes the proof.
\end{proof}

\begin{remark}\label{rem:Lie_algebra_of_trees_is_graded}
    The Lie algebra of trees $\Ls^{(\FI)}(m)_{\Q}$ is graded by the degree of tree diagrams, with degree $n$ graded part $\Ls^{(\FI)}_n(m)_{\Q}=\Fs^{(\FI),1}_n(m)_{\Q}$. By construction, the Lie bracket is compatible with the grading:
    $$[\Ls^{(\FI)}_n(m)_{\Q},\Ls^{(\FI)}_{n'}(m)_{\Q}]\subset\Ls^{(\FI)}_{n+n'}(m)_{\Q}.$$
\end{remark}

\section{The rational Goussarov--Habiro Lie algebra of string links}
\subsection{Clasper calculus and realization of diagrams}\label{chap:claspers}
In the previous section, we identified the rational primitive Lie algebra of tree diagrams $\Ls^{\FI}(m)_{\Q}$. It is given by concrete relations on tree diagrams, hence we can check that the \emph{clasper surgery realization} $R\colon\Q\Ds(m)\rightarrow\Ls L(m)_{\Q}$ satisfies those relations, so that it factors through $\Prim(\As^{\FI}(m)_{\Q})\rightarrow\Ls L(m)_{\Q}$. In other words, we check that the diagrammatic relations $\1T,\AS,\IHX,\STU^2$ are satisfied geometrically in the Goussarov--Habiro Lie algebra. Most of the argument is adapted from known clasper manipulations from Habiro \cite{habiro2000claspers}, Goussarov \cite{goussarov1998interdependent}, Ohtsuki \cite{ohtsuki2002quantum}, Conant--Teichner \cite{conantteichner2004classicalknots,conantteichner2004gropefeynmandiagrams} and Kosanovic \cite{kosanovic2020thesis}. In particular, we provide a detailed proof of the $\IHX$ relation, different from the ones in the aforementioned references.

The definitions of claspers and clasper surgeries are not recalled in details. We refer the reader to the original papers of Habiro \cite{habiro2000claspers} and Goussarov \cite{goussarov1998interdependent,goussarov2000finite}.

\subsubsection{Claspers.}
A \emph{simple tree clasper} $C$ on a string link $\gamma$ is a ribbon uni-trivalent tree diagram embedded in the complement of $\gamma$, with \emph{leaves} attached to the strands. See Figure \ref{fig:C3_move}, on the left, where $\gamma$ is the trival string link on three strands. We use Habiro's drawing convention \cite[Figure 7]{habiro2000claspers}: nodes (tri-valent vertices) are dots, leaves (uni)valent vertices) are disks clasping around a strand and ribbon edges have the blackboard framing with half-twists indicated by small barred disks.

\begin{figure}[!ht]
    \centering
    \includegraphics[width=0.45\textwidth]{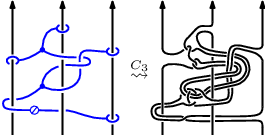}
    \caption{A degree $3$ simple tree clasper and the resulting clasper surgery.}
    \label{fig:C3_move}
\end{figure}

A tree clasper $C$ has a degree $\deg(C)=\#\text{leaves}-1$, which is equal to the degree of its underlying tree diagram. For example, the tree claspers in Figure \ref{fig:C1_C2_moves} have degree $1$ and $2$, respectively, while the one in Figure \ref{fig:C3_move} has degree $3$.

In \cite{habiro2000claspers}, Habiro defines more general claspers than just simple tree claspers. Those naturally appear when manipulating tree claspers. A clasper $C$ determines a link $L_C$. Dehn surgery along $L_C$ is called \emph{clasper surgery along $C$}. A clasper surgery does not modify the ambient manifold\footnote{In general, this is the case when the clasper is \emph{tame}, which is always the case in our context. See \cite{habiro2000claspers} for more details.} (here, the cylinder), but does modify the string link. The result of clasper surgery along a clasper $C$ on $\gamma$ is denoted $\gamma^C$. When $\gamma$ is the trivial string link $\gamma_0=1$, we write $\sigma(C)\coloneqq \gamma_0=1^C$. See Figures \ref{fig:C1_C2_moves} and \ref{fig:C3_move} for examples of clasper surgeries.

\emph{Habiro's clasper calculus} consists in a collection of \emph{moves}, i.e.\ modifications on claspers that do not alter the resulting surgery $\gamma^C$, those moves are pictured in \cite[Proposition 2.7]{habiro2000claspers}, from move 1 to move 12. We will refer to Habiro's moves by the same numbers and sometimes write, e.g.\ $H7$ for move 7.

Observe that clasper surgeries are entirely supported in a regular neighborhood of the clasper, hence they define \emph{local} operations on the set of string links and naturally lead to the study of equivalence classes of string links under those moves. Different filtrations or degrees on the set of claspers define various equivalence relations.

\subsubsection{The Goussarov--Habiro Lie algebra of string links.}
In this paper, we stick to the usual degree for trees, inducing local moves known as \emph{$C_n$-moves}, for $n\geq 1$. Let $\gamma$ be a string link. A \emph{$C_n$-move} on $\gamma$ is a surgery along a degree $n$ tree clasper on $\gamma$. Two string links $\gamma,\gamma'$ are said to be \emph{$C_n$-equivalent}, denoted $\gamma\overset{C_n}{\sim}\gamma'$, if one can be obtained from the other by applying a sequence of $C_n$-moves. The $C_n$-equivalence relation is symmetric by \cite[Proposition 3.23]{habiro2000claspers}. This defines the Goussarov--Habiro filtration of the monoid of string links $L(m)$
\begin{equation}\label{eq:GH_filtration}
    L(m)=L_1(m)\supset L_2(m)\supset L_3(m)\supset\dots
\end{equation}
where $L_n(m)\coloneqq\{\gamma\in L(m)\mid\gamma\overset{C_n}{\sim}1\}$ is the submonoid of $C_n$-trivial string links, i.e.\ string links that are $C_n$-equivalent to the trivial string link $1$. The quotient monoids $L_k(m)/C_{n+1}$ are finitely generated nilpotent groups \cite[Theorem 5.4]{habiro2000claspers}. They are abelian groups when $k=n$, in which case they are denoted
\begin{equation}\label{eq:graded_pieces_of_GH_Lie_algebra}
    \Ls_n L(m)\coloneqq \overline{L_n(m)}\coloneqq L_n(m)/C_{n+1},
\end{equation}
and more generally, for any $1\leq k,k'\leq n$, one has
\begin{equation}\label{eq:N_series_property}
    \left[L_k(m)/{C_{n+1}},L_{k'}(m)/{C_{n+1}}\right]\subset L_{k+k'}(m)/{C_{n+1}}.
\end{equation}
Thus the abelian groups (\ref{eq:graded_pieces_of_GH_Lie_algebra}) combine into the \emph{Goussarov--Habiro Lie algebra of string links on $m$ strands}
\begin{equation}\label{eq:GH_Lie_algebra}
    \Ls L(m)\coloneqq\bigoplus_{n\geq 1}\Ls_n L(m)\coloneqq\bigoplus_{n\geq 1}\overline{L_n(m)},
\end{equation}
whose Lie bracket is induced from (\ref{eq:N_series_property}). Alternatively, the truncation $\Ls_{\leq n}L(m)$ is the graded Lie algebra associated to the $N$-series $\{L_k(m)/C_{n+1
}\}_{1\leq k\leq n}$ of the group $L(m)/{C_{n+1}}$. See \cite[Theorem 2.1]{lazard1954groupes} and \cite{massuyeau2006finitetype}. By construction The Goussarov--Habiro Lie algebra is graded, and the Lie bracket satisfies the grading by (\ref{eq:N_series_property}).

\begin{remark}
    When $m=1$, the Goussarov--Habiro Lie algebra $\Ls L(1)$ is commutative, i.e.\ its bracket is trivial. This follows from the fact that $L(1)$ is a commutative monoid or, equivalently, that connected sum of knots is commutative.
\end{remark}

\subsubsection{Common moves and zip construction.}\label{sec:common_moves}
Combining Habiro's moves, one can obtain useful moves that we describe here: edge- and strand-crossing, sliding and twisting moves, and the zip construction.
\begin{figure}[!ht]
     \centering
     \begin{subfigure}[b]{0.3\textwidth}
         \centering
         \includegraphics[scale=1.4]{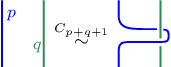}
         \caption{Crossing edges}
         \label{fig:crossing_edge}
     \end{subfigure}
     \hfill\quad
     \begin{subfigure}[b]{0.3\textwidth}
         \centering
         \includegraphics[scale=1.4]{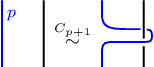}
         \caption{Crossing a strand}
         \label{fig:crossing_strand}
     \end{subfigure}
     \hfill
     \begin{subfigure}[b]{0.3\textwidth}
         \centering
         \includegraphics[scale=1.4]{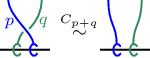}
         \caption{Sliding leaves}
         \label{fig:sliding_leaves}
     \end{subfigure}
    \begin{subfigure}[b]{0.4\textwidth}
         \centering
         \includegraphics[scale=1.45]{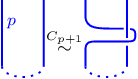}
         \caption{Self-crossing}
         \label{fig:crossing_self}
     \end{subfigure}
     \begin{subfigure}[b]{0.4\textwidth}
         \centering
         \includegraphics[scale=1.8]{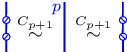}
         \caption{Twisting edges}
         \label{fig:twisting}
     \end{subfigure}
        \caption{Crossing edges and sliding leaves}
        \label{fig:sliding_and_crossing}
    \end{figure}
First, given two trees of respective degrees $p$ and $q$ in a clasper forests, one can perform a crossing change between edges of those two trees, up to a $C_{p+q+1}$-move, see Figure \ref{fig:crossing_edge}. Similarly, given a tree of degree $p$, one can perform a crossing change between one of its edges and a tangle strand, up to a $C_{p+1}$-move, see Figure \ref{fig:crossing_strand}. Lastly, one can slide two leaves, that belong to distinct trees of respective degrees $p$ and $q$, across one another, up to a $C_{p+q}$-move, see \ref{fig:sliding_leaves}. See \cite[\S 4]{habiro2000claspers} for proofs. From the moves depicted on Figure \ref{fig:sliding_and_crossing}(a,b,c) one can deduce two additional useful moves, depicted on Figure \ref{fig:sliding_and_crossing}(d,e), where the two blue lines in (d) belong to the same tree clasper.

The zip construction allows to get rid of boxes that may appear after some moves. One can imagine the box as a \emph{zipper} that will double everything when unzipped. See \cite[\S 3.3]{habiro2000claspers} for the construction. In Habiro's version of the zip construction, many new \emph{annoying} boxes appear during the process. If the starting tree has degree $n$, one can get rid of those annoying boxes modulo $C_{n+1}$-moves.

There is also a \emph{box-free} version of the zip construction\footnote{See also my master's thesis, available on my website, for more details and examples of the box-free zip construction.}, free of annoying boxes, detailed by Goussarov \cite{gusarov2001variations} and by Conant--Teichner \cite[\S 4.2]{conantteichner2004classicalknots}, based on the move described in \cite[Lemma 32]{conantteichner2004classicalknots}. Even though the box-free zip construction has no annoying boxes, it produces intricate linking of the resulting tree claspers. One can still get rid of those (modulo $C_{n+1}$) by applying the moves depicted on Figure \ref{fig:sliding_and_crossing}.

\subsubsection{Realizing tree diagrams in the Goussarov--Habiro Lie algebra}
We are now able to describe the realization map from tree diagrams to string links. Similar realization maps were considered by Ohtsuki in \cite{ohtsuki2002quantum}, Conant--Teichner \cite{conantteichner2004classicalknots,conantteichner2004gropefeynmandiagrams} and Kosanovic \cite{kosanovic2020thesis},.

\begin{theorem}\label{prop:realizing_trees}
    For each $n\geq 1$, there is a surjective $\Q$-linear morphism
    \begin{equation}\label{eq:realization_map_lie_algebras_over_Z}
        R_n\colon\Ls^{\FI}_n(m)=\faktor{\Q\Ds_n^T(m)}{\langle\1T,\AS,\IHX,\STU^2\rangle}\longrightarrow\Ls_nL(m)_{\Q}
    \end{equation}
    that sends a tree diagram $T$ to the class of the string link obtained by performing a clasper surgery along a clasper $C$ whose underlying diagram\footnote{What is meant by \emph{underlying diagram} is explicited in the proof.} is $T$. Those morphisms combine into a surjective morphism of Lie algebras over $\Q$
    \begin{equation}
        R\colon\Ls^{\FI}(m)_{\Q}\epic\Ls L(m)_{\Q}
    \end{equation}
    from the primitive Lie algebra of trees (Theorem \ref{thm:presentation_primitive_Lie_algebra}) onto the rational Goussarov--Habiro Lie algebra.
\end{theorem}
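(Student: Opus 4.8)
The plan is to construct the map $R_n$ on the level of the free module $\Q\Ds^T_n(m)$ first, then check that it descends through the relations $\1T,\AS,\IHX,\STU^2$, and finally verify that the resulting maps assemble into a Lie algebra morphism. To define $R_n$ on a tree diagram $T$, I would fix a \emph{tree clasper} $C_T$ in the complement of the trivial string link $1$ whose underlying ribbon tree has the combinatorial type of $T$ (same cyclic orientations at nodes, same cyclic order of leaves along each strand), and set $R_n(T)\coloneqq[\sigma(C_T)]\in\Ls_nL(m)_\Q$. The first thing to establish is that this is well-defined independently of the chosen embedding $C_T$: any two such claspers differ by edge-crossings, strand-crossings, leaf-slides and twists as in Figure \ref{fig:sliding_and_crossing}, each of which changes $\sigma(C_T)$ only by a $C_{n+1}$-move (or by $\AS$-type sign changes on $T$), hence leaves the class in $\Ls_nL(m)_\Q$ unchanged. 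Surjectivity of $R_n$ is essentially Goussarov--Habiro: by definition $\Ls_nL(m)=L_n(m)/C_{n+1}$ is generated by classes of string links obtained from $1$ by a single degree-$n$ tree clasper surgery (every $C_n$-trivial string link is, modulo $C_{n+1}$, a product of such), and each such generator is in the image.

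Next I would check that $R_n$ kills the four relations. For $\AS$: reversing the cyclic orientation at a node of $T$ corresponds to a half-twist insertion on the adjacent edges of $C_T$, which by clasper calculus (Habiro's moves, cf.\ Figure \ref{fig:twisting}) introduces a global sign, so $R_n(T^{\AS})=-R_n(T)$. The $\1T$ relation: a tree with an isolated leg--leg pair on one strand bounds, after the zip/leaf manipulations, a trivial clasper, so surgery gives back $1$; equivalently this is the standard fact that unframed string links do not see isolated chords. The $\IHX$ relation is the substantive geometric input: here I would reproduce (or cite and adapt) the proof announced in the text — a detailed argument using the box-free zip construction of Goussarov and Conant--Teichner, sliding leaves and applying the moves of Figure \ref{fig:sliding_and_crossing} to show that the three claspers $C_{T^I},C_{T^H},C_{T^X}$, with the appropriate signs, compose to a $C_{n+1}$-trivial string link. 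Finally $\STU^2$: since $\STU^2$ among size-$1$ forests is a difference of two $\STU$-type breakings of a single tree at two different nodes, and since each individual $\STU$ move on a clasper is realized (modulo $C_{n+1}$) by the leaf-slide move of Figure \ref{fig:sliding_leaves} which produces a degree-$n$ tree with the two trees fused, independence of the chosen node follows from the commutativity of the relevant leaf-slides up to higher degree — this is the clasper analogue of Conant's argument, and I expect it to be the second most delicate point after $\IHX$.

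Once $R_n$ is defined and descends, I would assemble the $R_n$ into $R\colon\Ls^{\FI}(m)_\Q\to\Ls L(m)_\Q$ and check it is a Lie algebra morphism. The bracket on $\Ls^{\FI}(m)_\Q$ is $[T,T']=\overrightarrow{(T'T)(TT')}$ by Theorem \ref{thm:presentation_primitive_Lie_algebra}, i.e.\ the difference in $\Fs^{\FI,1}(m)_\Q$ between the two stacked forests $T'\cdot T$ and $T\cdot T'$, expressed as a sum of trees obtained by sliding the legs of $T$ past those of $T'$ and fusing them via $\STU$. On the topological side, the bracket on $\Ls L(m)$ is the commutator of group classes, and the $N$-series property (\ref{eq:N_series_property}) together with the clasper description of commutators (placing $C_T$ and $C_{T'}$ with linked leaves realizes, modulo $C_{n+n'+1}$, exactly the forest-fusions produced by the slide moves) shows that $R(\,[T,T']\,)=[R(T),R(T')]$. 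Grading compatibility is immediate since a degree-$n$ tree gives a $C_n$-move. The main obstacle, as flagged above, is the $\IHX$ step: giving a clean, self-contained clasper-calculus proof that the signed sum of $C_{T^I},C_{T^H},C_{T^X}$ is $C_{n+1}$-trivial — I would organize it around the Kirchhoff/IHX presentation of Figure \ref{fig:IHX_relation_Kirchhoff} and the box-free zip construction, reducing everything to a bounded number of applications of the crossing and sliding moves, and being careful that all the "annoying" linkings introduced along the way are themselves removable modulo $C_{n+1}$.
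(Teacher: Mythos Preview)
Your overall plan mirrors the paper's proof: define $R_n$ by clasper realization, check well-definedness and surjectivity, verify the four relations, then bracket compatibility. Two points need correction.

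First, your construction of $C_T$ omits a detail the paper makes explicit: an additional positive half-twist must be inserted near a single chosen leaf. This is not cosmetic --- it is what makes the signs in $\STU$-type manipulations come out correctly, and it resurfaces in the bracket verification. There the paper finds that the diagrammatic commutator $\overrightarrow{(T'T)(TT')}$ and the clasper commutator $[\sigma(C_T),\sigma(C_{T'})]$ differ by a global sign, absorbed precisely by the two extra half-twists carried by $C_T$ and $C_{T'}$. Without this bookkeeping your bracket check will be off by a sign.

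Second, and more seriously, your $\STU^2$ argument rests on a misconception. You describe the relation as arising from ``two $\STU$-type breakings of a single tree at two different nodes'', but breaking a tree at a node adjacent to a leg produces a size-$2$ forest of strictly smaller trees, which lives in the wrong degree and cannot be compared inside $\Ls_nL(m)$. For size-$1$ diagrams the $\STU^2$ templates are not trees at all: they are Feynman diagrams with exactly one cycle (cf.\ Proposition \ref{prop:STU2_contains_square_relation}), and the four trees in the relation arise by cutting that cycle at either of two chosen legs. The paper accordingly realizes the template as a clasper $C$ with one loop, shows (equation (\ref{eq:Ohtsuki_move})) that surgery along $C$ is $C_{n+1}$-equivalent to surgery along the tree clasper obtained by cutting the loop at one node, and then applies a tree-level $\STU$ (equation (\ref{eq:tree_STU})) to split that tree as $T_{=}-T_{\times}$. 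Performing this at two different nodes of the loop yields both sides of $\STU^2$, and they agree because each equals $[\sigma(C)]$. The mediating loop-clasper is the idea your sketch is missing; the leaf-slide argument you propose does not close the gap, since the slide move of Figure \ref{fig:sliding_leaves} only controls the fusion of two \emph{distinct} claspers modulo $C_{p+q}$, which is one degree too coarse here.
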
    
\begin{proof}
    \textbf{Definition of the realization map:} First, we define the realization map $$\tilde{R}\colon\Ds_n^T(m)\rightarrow\overline{L_n}(m)$$
    as follows: start with a tree diagram $T\in\Ds_n^T(m)$ as on Figure \ref{fig:R-diagram}. Since $T$ is a tree, its underlying graph is planar and we can pick a planar projection where the cyclic orderings of the nodes are counterclockise. Place that planar projection of the tree to the left of the $m$ vertical strands, with the univalent vertices lying on a vertical line parallel to the strands, as on Figure \ref{fig:R-planar-clasper}. Attach the leaves of the tree to their respective positions on the strand, as given by the diagram $T$, with an additional \emph{positive} half-twist added to a single chosen leaf\footnote{The addition of that additional half-twist is related to the footnote on page 68 in \cite{habiro2000claspers}. Although it is not crucial in the case of realizing trees, it will be when realizing forests later on, in order for the $\STU$ relation to hold with the usual signs.} (see Figure \ref{fig:R-CT}). Outside of that additional half-twist, the attachment must be made without introducing half-twists, always going from left to right, and the disk-leaves must clasp to the tangle strands as on Figure \ref{fig:R-CT}. This yields a simple tree clasper $C_T$ on $m$ strands.
    \begin{figure}[!ht]
     \centering
        \begin{subfigure}[b]{0.155\textwidth}
              \centering
        \includegraphics[width=0.6\textwidth]{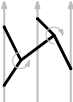}
        \caption{Diagram $T$}\label{fig:R-diagram}
    \end{subfigure}
     \hfill
     \begin{subfigure}[b]{0.23\textwidth}
              \centering
        \includegraphics[width=0.95\textwidth]{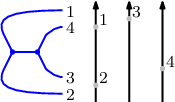}
        \caption{Planar tree}\label{fig:R-planar-clasper}
    \end{subfigure}
    \hfill
    \begin{subfigure}[b]{0.23\textwidth}
              \centering
        \includegraphics[width=0.95\textwidth]{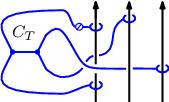}
        \caption{Clasper $C_T$}\label{fig:R-CT}
    \end{subfigure}
     \hfill
     \begin{subfigure}[b]{0.28\textwidth}
              \centering
        \includegraphics[width=0.95\textwidth]{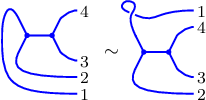}
        \caption{Changing the order}\label{fig:R-changing-order}
    \end{subfigure}
        \caption{Realizing a tree diagram}
        \label{fig:realizing_a_tree}
\end{figure}
    
    The only choices that are made in the above process are:
    \begin{enumerate}
        \item The choice of ordering of the univalent vertices on the vertical line in Figure \ref{fig:R-CT}. Perturbing this order cyclically introduces full twists, which do not matter modulo $C_{n+1}$-moves by (6) in the proof of \cite[Theorem 4.3]{habiro2000claspers}. See Figure \ref{fig:R-changing-order}.
        \item The choice of embedding of the edges joining the planar graph to the disk-leaves. Any two such embeddings are related by a sequence of crossing changes of pairs of edges or of edges with tangle strands, which do not matter modulo $C_{n+1}$-moves by Section \ref{sec:common_moves}.
        \item The choice of leaf to which we added the additional half-twist. A change of chosen leaf can be obtained by introducing two new half-twists: one to cancel out with the former one, and one as the new one. Since adding a half-twist corresponds to inversing modulo $C_{n+1}$ and inversing twice does not change anything, this choice does not matter modulo $C_{n+1}$.
    \end{enumerate}
    For a tree clasper $C_T$ in such a position, we say that its \emph{underlying diagram} is $D(C_T)\coloneqq T$.

    Define $\tilde{R}(T)\coloneqq \gamma_0^{C_T}=\sigma(C_T)$. Since $C_T$ is a degree $n$ tree clasper, the $C_{n+1}$-equivalence class of $\tilde{R}(T)$ is indeed an element of $\overline{L_n}(m)$.

    Since $\overline{L_n}(m)$ is an abelian group, the monoid morphism $\tilde{R}$ induces a morphism of abelian groups
    \begin{equation}\label{eq:group_morphism_free_abelian_group}
        \tilde{R}\colon\Z\Ds_n^T(m)\rightarrow\overline{L_n}(m).
    \end{equation}

    \textbf{The realization map $\tilde{R}$ is surjective:}
    By \cite[Lemma 5.5]{habiro2000claspers}, the abelian group $\overline{L_n}(m)$ is generated by string links of the form $\sigma(T)$ where $T$ is a simple tree clasper of degree $n$ on $\gamma_0$. Let $T$ be a simple tree clasper of degree $n$. Pull the nodes to the left of the strands, as on Figure \ref{fig:realizing_a_tree}(c). This may require strand-crossing and self-crossing moves, which do not change the class $[T]\in C_n(m)$. Flip the nodes so that their cyclic ordering is counterclockwise, and remove half-twists by pairs (again, this does not change the $C_{n+1}$-equivalence class of $T$), leaving either one or zero half-twists. The resulting representative of $[T]_{C_{n+1}}$ is then in the image of $\tilde{R}$. More precisely, it is given by $\tilde{R}(D(T))$ where $D(T)$ is its underlying tree diagram. This shows that $\tilde{R}$ is surjective.

    \textbf{The map $\tilde{R}$ satisfies $\1T$:} First, the $\1T$ relation in $\Z\Ds^T(m)$ is trivial except in degree $1$. In degree $1$, the $\1T$ relation is shown on Figure \ref{fig:1T_relation_for_claspers}. Note that choosing a negative half-twist instead of a positive half-twist results in a \emph{trefoil lonk knot}, which is $C_2$-equivalent to the trivial long knot.
    \begin{figure}[!ht]
        \centering
        \includegraphics[scale=1.6]{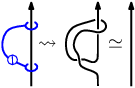}
        \caption{The $\1T$ relation for claspers}
        \label{fig:1T_relation_for_claspers}
    \end{figure}

    \textbf{The map $\tilde{R}$ satisfies $\AS$:} Flipping a node introduces three half-twists, hence we are done since each half-twist corresponds to inversing (see Theorem 4.7 in \cite{habiro2000claspers}):
    \begin{equation}
        \vcenter{\hbox{\includegraphics[scale=1.5]{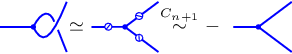}}}.
    \end{equation}
    
    \textbf{The map $\tilde{R}$ satisfies $\IHX$:} Proving that the $\IHX$ relation is satisfied requires a bit more care. A proof is given in \cite[Theorem 6.6]{gusarov2001variations} in the different but equivalent language of \emph{variations}. A sketch of proof is given in \cite[Lemma E.11]{ohtsuki2002quantum} and a proof in four dimensions is given in \cite{Rob2007Jacobi}. We give here a detailed argument.

    We will need the $C_{n+1}$-equivalence
    \begin{equation}\label{eq:Ohtsuki_IHX}
        \vcenter{\hbox{\includegraphics[scale=1.7]{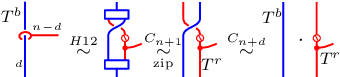}}}
    \end{equation}
    which starts with a blue tree clasper of degree $d\geq 1$ and a red clasper, which is a tree outside of the leaf depicted, of degree $n-d$. We apply Habiro's move 12, then the zip construction modulo $C_{n+1}$-moves to obtain two tree claspers $T^b$ and $T^r$, where $T^b$ (blue) is identical to the original blue tree, and $T^r$ (red) equals the union of those, with a node instead of the leaf and an additional half-twist. Note that $\deg(T^r)=n-d+d=n$. Then we can vertically separate $T^b$ and $T^r$ modulo $C_{n+d}$-moves, using sliding and edge-crossing moves (see Section \ref{sec:common_moves}). Since $n+d\geq n+1$, we obtain the desired $C_{n+1}$-equivalence. Note that $T^b$ and $T^r$ commute.

With this move in hands, we can prove that the $\IHX$ relation holds in $\overline{L_n}(m)$:
\begin{equation}\label{eq:IHX_proof}
        \vcenter{\hbox{\includegraphics[scale=1.65]{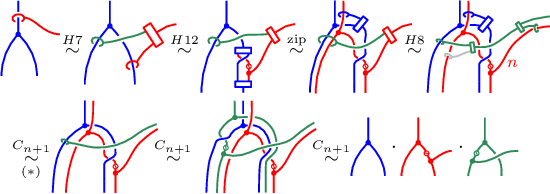}}}.
    \end{equation}
The first line of moves follows (\ref{eq:Ohtsuki_IHX}). The $C_{n+1}$-equivalence marked with $(\ast)$ is the subtler step. Before $(\ast)$, all the annoying boxes that have appeared during the zip construction are clasped to the red clasper, which is a tree clasper of degree $n$. Following the box-free zip construction (Section \ref{sec:common_moves}), we can get rid of those annoying boxes at the price of linking pieces of the red clasper in a complicated way around the blue and green claspers. Then apply the move from Figure (\ref{eq:Ohtsuki_IHX}) again to the green and blue claspers. Now that it is done, we can bring back the complicated pieces of the red clasper to their original positions, modulo $C_{n+1}$-equivalence. Finally, after vertically separating the trees, we obtain a \emph{product} of the blue, red and green trees. Again, this product is commutative since they have respective degrees $d,n$ and $n$.

At the beginning of (\ref{eq:IHX_proof}), we could have directly applied (\ref{eq:Ohtsuki_IHX}) instead of sliding the leaf down. Comparing the two $C_{n+1}$-equivalences, and cancelling the blue tree (by multiplying both sides by an inverse of that clasper modulo $C_{n+1}$), we obtain the $C_{n+1}$-equivalence
\begin{equation}\label{eq:IHX_conclusion}
        \vcenter{\hbox{\includegraphics[scale=1.7]{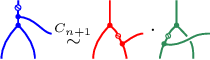}}}.
    \end{equation}
which yields the $\IHX$ relation modulo $C_{n+1}$, after removing the half-twists, i.e.\ taking the inverses of each side.

\textbf{The map $\tilde{R}$ satisfies $\STU^2$:} A proof in the case of knots is given in \cite[Sec.\ 10.3]{kosanovic2020thesis}. We detail here a very similar argument.

The first $C_{n+1}$-equivalence that we need is
\begin{equation}\label{eq:Ohtsuki_move}
    \vcenter{\hbox{\includegraphics[scale=1.5]{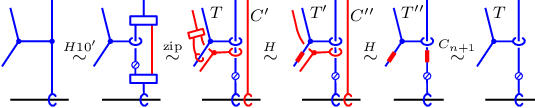}}}
\end{equation}
starting with a simple clasper $C$ with $n$ nodes and exactly one cycle. The cycle contains the depicted nodes. First, apply move 10' (which is move 10 in a slightly different form) and the zip construction to get a tree clasper $T$ and a red clasper $C'$. Remove the boxes, at the cost of replacing $T$ by $T'$, which has pieces that link in a neighborhood of the red clasper (those pieces are indicated with a red bar, there are situated where the annoying boxes used to clasp). Next, performing the surgery along the red clasper yields a new tree clasper $T''$, which gets an additional small red bar. Up to $C_{n+1}$-equivalence, we can bring back all those pieces of $T''$ to their original positions, to get $T$ again.

The $C_{n+1}$-equivalence depicted below is a form of $\STU$ relation for tree claspers \cite[Lemma E.11]{ohtsuki2002quantum}
\begin{equation}\label{eq:tree_STU}
    \vcenter{\hbox{\includegraphics[scale=1.5]{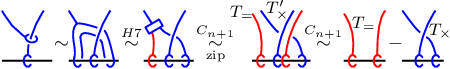}}}
\end{equation}
where the left-hand-side depicts a non-simple tree clasper of degree $n$, with one leaf clasping around one of its legs. First, slide down the leaf to replace it with two opposing leaves by move 7. Then the zip construction modulo $C_{n+1}$ yields two simple tree claspers $T_=$ and $T'_{\times}$. We can separate them, and remove the half-twist of $T_{\times}'$ up to a minus sign (we use the additive notation since we are working in the abelian group $\overline{L_n}(m)$).

Combining those $C_{n+1}$-equivalences, we can finally prove the $\STU^2$ relation for string links
\begin{equation}\label{eq:STU2_trees}
    \vcenter{\hbox{\includegraphics[scale=1.4]{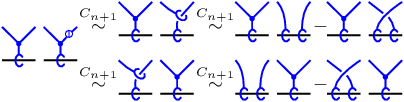}}}
\end{equation}
where the left-hand-side is a clasper (with $n$ nodes) with exactly one cycle which contains the two nodes depicted. Add a half-twist anywhere (this is just to avoid removing half-twists at the end). We can apply (\ref{eq:tree_STU}) to either one of the depicted nodes, hence replacing the node by a leaf together with a half-twist, which cancels out with the original half-twist modulo $C_{n+1}$. Then applying (\ref{eq:STU2_trees}) to the two different outputs yields the desired $\STU^2$ relation.

This concludes the proof that $\tilde{R}$ factors through $R_n$ as in the statement of this theorem. Those combine into a surjective $\Q$-linear morphism $R\colon\Ls^{\FI}(m)_{\Q}\rightarrow\Ls L(m)_{\Q}$.

\textbf{The map $R$ is compatible with the Lie brackets:} Pick two tree diagrams $T,T'$ of respective degrees $k,l$, let $n\coloneqq k+l$. Denote by the same letters tree claspers realizing them.
    \begin{figure}[!ht]
    \centering
    \includegraphics[width=0.55\textwidth]{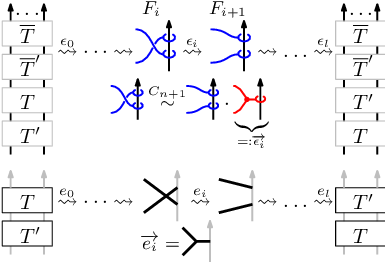}
    \caption{Commutator of string links (top) versus commutator of tree diagrams (bottom)}
    \label{fig:realization_map_preserves_bracket}
\end{figure}

    The commutator\footnote{We omit $\cdot$ when writing products: we write $TT'$ for $T\cdot T'$.} $[R(T),R(T')]$ is equal to the $C_{n+1}$-equivalence class of $\sigma(\overline{T}\overline{T'}TT')$ where $\overline{T}$ represents the inverse of $T$, idem for $T'$, the product is concatenation of tree claspers and $\sigma$ denotes surgery modulo $C_{n+1}$-equivalence. The strategy is to go from the clasper $\overline{T}\overline{T'}TT'$ to the clasper $\overline{T}\overline{T'}T'T$ by a sequence $\epsilon_0,\dots,\epsilon_{r-1}$ of slide moves (Section \ref{sec:common_moves}), see Figure \ref{fig:realization_map_preserves_bracket}(top). On one hand, $\overline{T}\overline{T'}T'T\overset{C_{n+1}}{\sim}1$. On the other hand, each slide move creates a red tree $\overrightarrow{\epsilon_i}$. We can separate those new red tree claspers since they have degree $n$, hence they commute with all the rest. All in all, we obtain
    \begin{equation}\label{eq:commutator_of_tree_claspers}
        [\sigma(T),\sigma(T')]=\sigma(\overline{T}\overline{T'}TT')={\sum}_{i=1}^{r-1}\sigma(\overrightarrow{\epsilon_i}).
    \end{equation}

    Let us compare (\ref{eq:commutator_of_tree_claspers}) with the commutator of the original tree diagrams $T,T'$. We can follow the exact same path of slide moves on the diagrammatic level to get
    \begin{equation}\label{eq:commutator_of_tree_claspers_diagrams}
        [T',T]=\overrightarrow{(TT')(T'T)}={\sum}_{i=1}^{r-1}\overrightarrow{e_i}.
    \end{equation}
    by Definition \ref{def:vector_from_F_to_F'_label}.

    At this point, one may think that there is a mistake. Indeed, in (\ref{eq:commutator_of_tree_claspers}) and (\ref{eq:commutator_of_tree_claspers_diagrams}) the roles of $T$ and $T'$ are reversed. The answer is given by the mysterious sign in the definition of the realization map. More precisely, the tree claspers realizing $T,T'$ both have an additional half-twist introduced somewhere. Thus, the red tree clasper $\overrightarrow{\epsilon_i}$ has two half-twists, or zero, modulo $C_{n+1}$. This means that $R(\overrightarrow{e_i})=-\sigma(\overrightarrow{\epsilon_i})$ and therefore
    \begin{equation}\label{eq:realization_map_preserves_bracket}
        R([T,T'])=-R([T',T])=-{\sum}_iR(\overrightarrow{e_i})={\sum}_i\sigma(\overrightarrow{\epsilon_i})=[R(T),R(T')],
    \end{equation}
    which concludes the proof that $R$ is a surjective morphism of Lie algebras over $\Q$.
\end{proof}

\begin{remark}
    The above proof already produces a surjective morphism of Lie algebras over $\Z$. We plan on investigating the story over $\Z$ in future work.
\end{remark}

\subsection{The Kontsevich integral and the tree preservation theorem}\label{chap:Kontsevich integral}
The \emph{Kontsevich integral} is the last ingredient of our identification of the rational Goussarov--Habiro Lie algebra. Discovered by Kontsevich \cite{kontsevich1993vassiliev} as a universal Vassiliev invariant over $\Q$, it is a morphism
$$Z\colon L(m)\longrightarrow\widehat{\As^{\FI}}(m)_{\Q},$$
which to a string link $\gamma\in L(m)$ associates an element $Z(\gamma)$ in the graded completion $\widehat{\As^{\FI}}(m)_{\Q}$ of $\As^{\FI}(m)_{\Q}$. The Kontsevich integral of a string link $Z(\gamma)$ can be thought of as a \emph{series expansion} of $\gamma$, such that degree $\leq n$ Vassiliev invariants only depend on the degree $\leq n$ part $Z_{\leq n}(\gamma)$. However, it is an open question whether $Z$ is injective on $L(m)$, i.e.\ whether the Kontsevich integral classifies string links.

Since the Kontsevich integral turns string links into diagrams, it is a good candidate for inducing an inverse to the realization map $R$.

\subsubsection{The combinatorial Kontsevich integral.}\label{sec:combinatorial_Kontsevich_integral}
This section gathers results from Bar-Natan \cite{bar1995vassiliev,bar1996tangles} and Kontsevich \cite{kontsevich1993vassiliev}, Le and Murakami \cite{le1996universal} as well as the book by Chmutov, Duzhin and Mostovoy \cite{chmutov2012introduction}. See also the nice expository papers \cite{bar1997fundamental,chmutov2005kontsevich}.

This section gathers the main properties of the combinatorial version of the Kontsevich integral \cite{bar1996tangles}, to which we stick from now on. The combinatorial Kontsevich integral is understood as a functor whose domain is the category $\PTangles$ of \emph{parenthesized tangles} up to isotopy (denoted $\textbf{\text{PT}}$ in \cite{bar1996tangles}), i.e.\ isotopy classes of unframed tangles with a choice of parenthesizing of their endpoints, also referred to as \emph{q-tangles} \cite{le1996universal}. Its target is the category $\widehat{\Diag}_{\Q}$ of uni-trivalent diagrams \cite[Def.\ 3.1]{bar1996tangles} on tangle skeletons, whose objects are simply $\updownarrow$-words and morphisms are formal series in $\widehat{\As^{\FI}}(S)_{\Q}$ for the adequate tangle skeleton $S$. Composition in $\PTangles$ is simply vertical concatenation. Composition in $\widehat{\Diag}_{\Q}$ is given by concatenation of the tangle skeleta and concatenation of the diagrams (extended linearly).

\begin{theorem}[\cite{bar1996tangles,chmutov2005kontsevich,drinfeld1991quasi}]\label{thm:properties_combinatorial_Kontsevich_integral}
    The Kontsevich integral of unframed parenthesized tangles
    \begin{equation}
        Z\colon\PTangles\longrightarrow\widehat{\Diag}_{\Q}
    \end{equation}
    has the following properties:
    \begin{enumerate}[label={\upshape(\roman*)}]
    \item\label{it:Z_is_a_functor} It is a \emph{functor}: If $\gamma=\id_{w}$ for some parenthesized $\updownarrow$-word $w$, i.e.\ it consists of parallel vertical strands, then $Z(\gamma)=1\in\widehat{\As^{\FI}}(m)$ is the empty diagram. If $\gamma_1,\gamma_2$ are two concatenable tangles (i.e.\ composable morphisms in $\PTangles$), then $Z(\gamma_1\circ \gamma_2)=Z(\gamma_1)\circ Z(\gamma_2)$.
    \item\label{it:Z_is_monoidal} It is a \emph{monoidal} functor, where the monoidal structure on both sides is given by juxtaposition, i.e.\ horizontal concatenation\footnote{In $\PTangles$, the parenthesizing of the tensor product is given by: $u \otimes v\coloneqq ((u)(v))$.}: $Z(\gamma_1\otimes \gamma_2)=Z(\gamma_1)\otimes Z(\gamma_2)$.
    \item\label{it:Z_on_braiding_and_associators} It takes the following values on the standard \emph{braiding}, \emph{cup} and \emph{cap} parenthesized tangles:
    \begin{align}
        \PTangles(\uparrow\uparrow,\uparrow\uparrow)\ni\vcenter{\hbox{\includegraphics{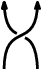}}}\quad&\overset{Z}{\longmapsto}\quad\vcenter{\hbox{\includegraphics{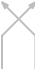}}}+\frac{1}{2}\vcenter{\hbox{\includegraphics{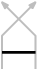}}}+\frac{1}{8}\vcenter{\hbox{\includegraphics{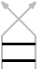}}}+\dots\nonumber\\&\quad\quad\quad\quad\quad=\vcenter{\hbox{\includegraphics{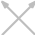}}}\circ\exp\paren{\frac{1}{2}\vcenter{\hbox{\includegraphics{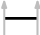}}}},\\
        \PTangles(\uparrow\uparrow,\uparrow\uparrow)\ni\vcenter{\hbox{\includegraphics{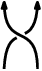}}}\quad&\overset{Z}{\longmapsto}\quad\vcenter{\hbox{\includegraphics{Ressources/Kontsevich_integral/basic_tangles/diagram_cross_empty.eps}}}\circ\exp\paren{\frac{-1}{2}\vcenter{\hbox{\includegraphics{Ressources/Kontsevich_integral/basic_tangles/diagram_1_chord.eps}}}},\\
        \PTangles(\uparrow\downarrow,\emptyset)\ni\vcenter{\hbox{\includegraphics{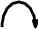}}}\quad&\overset{Z}{\longmapsto}\quad \vcenter{\hbox{\includegraphics{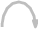}}},\\
        \PTangles(\emptyset,\downarrow\uparrow)\ni\vcenter{\hbox{\includegraphics{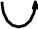}}}\quad&\overset{Z}{\longmapsto}\quad \vcenter{\hbox{\includegraphics{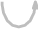}}}.
    \end{align}
    \item\label{it:Z_and_reversing} It is compatible with the operation of \emph{reversing a strand}. If $\textbf{R}_C\gamma$ is obtained from $\gamma$ by reversing the orientation of the component $C$, then
    $$Z(\textbf{R}_C\gamma)=(\textbf{R}_C)_*Z(\gamma)$$
    where $(R_C)_*\colon\widehat{\As^{\FI}}(S)_{\Q}\rightarrow\widehat{\As^{\FI}}(\textbf{R}_CS)_{\Q}$ is defined as follows. For a diagram $D$ on the tangle skeleton $S=S(\gamma)$, $$(\textbf{R}_C)_*(D)=(-1)^{\#\{\text{endpoints on }C\}}D'\in\widehat{\As^{\FI}}(\textbf{R}_CS)_{\Q}$$
    where $D'$ is the same diagram as $D$ on the tangle skeleton $\textbf{R}_CS=S(\textbf{R}_C\gamma)$.
    \item\label{it:Z_and_doubling} It is compatible with the operation of \emph{doubling a strand}. If $\textbf{D}_C\gamma$ is obtained from $\gamma$ by doubling\footnote{If $\uparrow$ is an endpoint of $C$ one of the end $\updownarrow$-words, then it is replaced by $(\uparrow\uparrow)$ in $\textbf{D}_C\gamma$, and similarly for $\downarrow$.} the strand $C$, then
    $$Z(\textbf{D}_C\gamma)=(\textbf{D}_C)_*Z(\gamma)$$
    where $(D_C)_*\colon\widehat{\As^{\FI}}(S)_{\Q}\rightarrow\widehat{\As^{\FI}}(\textbf{D}_CS)_{\Q}$ is defined as follows. For a diagram $D$ on the tangle skeleton $S=S(\gamma)$,
    $$(\textbf{D}_C)_*(D)=\sum_{D'\text{ lifts }D}D'\in\widehat{\As^{\FI}}(\textbf{R}_CS)$$
    is the sum of all the $2^{\#\{\text{endpoints on }C\}}$ ways of lifting $D$ to a diagram on $\textbf{D}_C\gamma$. For example:
    \begin{align*}
        (\textbf{D}_{3})_*\paren{\vcenter{\hbox{\includegraphics[height=30pt]{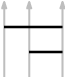}}}}=\vcenter{\hbox{\includegraphics[height=30pt]{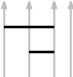}}}+\vcenter{\hbox{\includegraphics[height=30pt]{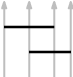}}}+\vcenter{\hbox{\includegraphics[height=30pt]{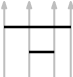}}}+\vcenter{\hbox{\includegraphics[height=30pt]{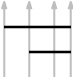}}},
    \end{align*}
    where $\textbf{D}_3$ is the operation of doubling the third strand.
    \end{enumerate}
\end{theorem}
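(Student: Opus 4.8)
The plan is not to reprove this from scratch but to recall the combinatorial construction of $Z$ due to Bar-Natan \cite{bar1996tangles} (see also \cite{chmutov2012introduction}), which rests on Drinfeld's associators \cite{drinfeld1991quasi} and on Le--Murakami \cite{le1996universal}, and to indicate why each listed property is built into that construction. The starting point is that $\PTangles$ admits a finite generators-and-relations presentation as a monoidal category: every parenthesized tangle decomposes, under vertical composition $\circ$ and horizontal juxtaposition $\otimes$, into \emph{elementary} pieces, namely the positive and negative braidings of two oriented strands, the cups $\PTangles(\emptyset,\downarrow\uparrow)$ and caps $\PTangles(\uparrow\downarrow,\emptyset)$ in their various orientations, and the \emph{reassociation} isomorphisms which only re-bracket the boundary $\updownarrow$-words. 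It therefore suffices to prescribe $Z$ on these pieces and to check compatibility with the relations among them.

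We would set $Z$ on the braidings, cups and caps to be the series displayed in \ref{it:Z_on_braiding_and_associators}, and on a reassociation isomorphism we would set $Z$ to be an appropriate orientation-cabling of the Drinfeld associator $\Phi\in\widehat{\As^{\FI}}(3)_{\Q}$, a group-like series whose existence over $\Q$ is Drinfeld's theorem. Granting well-definedness, properties \ref{it:Z_is_a_functor} and \ref{it:Z_is_monoidal} are then immediate: $Z$ is a monoidal functor because it is defined as one on generators, and it sends identities to the empty diagram because the empty diagram is the unit morphism in $\widehat{\Diag}_{\Q}$.

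The substance of the argument is the verification that $Z$ respects the defining relations of $\PTangles$. In a Turaev/Yetter/Shum-type presentation these relations are: the Reidemeister moves --- where for \emph{unframed} tangles the $\mathrm{R}1$ move is exactly absorbed by the $\1T$ relation, while $\mathrm{R}2$ and $\mathrm{R}3$ follow from group-likeness of $\exp\!\big(\tfrac12\,\text{(single chord)}\big)$ together with the $\4T$ (equivalently $\STU$) relation; naturality of braidings and reassociations with respect to cups and caps; and the pentagon and the two hexagon identities, which hold precisely because $\Phi$ is a Drinfeld associator. Once this is in place, \ref{it:Z_and_reversing} and \ref{it:Z_and_doubling} are checked on the elementary pieces only, since $\textbf{R}_C$ and $\textbf{D}_C$ are monoidal endofunctors of $\PTangles$ and $Z$ is monoidal: reversing a strand multiplies each elementary diagram by $(-1)$ to the power of the number of its endpoints on that strand, compatibly with $\Phi$; doubling a strand sends a diagram to the sum of its lifts and sends $\Phi$ to its image under the co-operation that doubles a strand, which is again a product of associators by Drinfeld's relations.

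The main obstacle --- and the reason we cite rather than prove --- is the input from Drinfeld's theory: the existence of a rational associator $\Phi$ satisfying the pentagon and hexagons, and the formal but bookkeeping-heavy check that the Reidemeister moves for unframed parenthesized tangles survive once the $\1T$ relation is imposed, so that $Z$ descends to $\widehat{\As^{\FI}}$ rather than merely to $\widehat{\As}$. We will recall these statements from \cite{drinfeld1991quasi,le1996universal,bar1996tangles,chmutov2012introduction} and use them as black boxes.
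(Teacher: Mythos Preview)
Your proposal is correct in spirit, but note that the paper does not actually prove this theorem at all: it is stated purely as a citation of \cite{bar1996tangles,chmutov2005kontsevich,drinfeld1991quasi}, with no proof environment following it. The paper merely remarks afterwards that any parenthesized tangle decomposes into basic tangles and that the values in (iii)--(v) determine $Z$ on everything.

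Your sketch therefore goes further than the paper does, and what you outline---the generators-and-relations presentation of $\PTangles$, the role of a Drinfeld associator for the pentagon and hexagon identities, and the reduction of \ref{it:Z_and_reversing} and \ref{it:Z_and_doubling} to elementary pieces by monoidality---is the standard story and is accurate. One minor caution: your account of $\mathrm{R}1$ being ``absorbed by the $\1T$ relation'' is slightly loose in the unframed setting; the usual combinatorial construction first produces a framed invariant valued in $\widehat{\As}$ and then passes to $\widehat{\As^{\FI}}$ after a correction (in the analytic picture this is the hump normalization), so if you expand this sketch you should be careful to state that passage precisely rather than suggesting $\mathrm{R}1$ holds on the nose once $\1T$ is imposed. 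Otherwise, treating the result as a black box with your outline as motivation is entirely in line with how the paper uses it.
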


As explained in \cite[Section 10.3.2]{chmutov2012introduction}, any parenthesized tangle can be decomposed into a product of tensor products of basic tangles. Those basic tangles are the standard tangles from Theorem \ref{thm:properties_combinatorial_Kontsevich_integral}(iii), together with \emph{associator tangles}\footnote{It should be mentioned that the construction of a combinatorial Kontsevich integral $Z$ actually depends on the choice of a \emph{Drinfeld's associator}, of which there is a deep theory (see \cite{chmutov2012introduction} for more details and references). We omit this here since this choice has no influence for our purpose.}, or images of them under a sequence of reversing and doubling operations. The value of the Kontsevich integral $Z$ on each basic tangle is determined by \ref{thm:properties_combinatorial_Kontsevich_integral}(iii,iv,v), and the value of $Z$ on the original parenthesized tangle is obtained by the multiplicativity properties from \ref{thm:properties_combinatorial_Kontsevich_integral}(i,ii).

Denote by $Z_{\leq n}$, $Z_{n}$ and $Z_{\geq n}$ the compositions $\proj\circ Z$ where $\proj$ is the projection onto the corresponding direct summands.

\subsubsection{Tree preservation theorem.}
\begin{theorem}\label{thm:tree_preservation_theorem}
    Let $n\geq 1$ and $T\in\Ds^T_n(m)$ a degree $n$ tree diagram on $m$ strands. Then
    \begin{equation}\label{eq:tree_preservation_theorem}
        Z(R(T))=1+T+O(n+1),
    \end{equation}
    where $R(T)=\sigma(C_T)$ is the result of surgery along a tree clasper $C_T$ realizing $T$ (see Theorem \ref{prop:realizing_trees}), and the notation $O(n+1)$ means that the equality holds modulo terms of degree $\geq n+1$. In particular, the Kontsevich integral induces an inverse to $R$, which then provides an isomorphism of graded Lie $\Q$-algebras $\Ls^{\FI}(m)_{\Q}\cong\Ls L(m)_{\Q}$.
\end{theorem}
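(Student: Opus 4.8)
The plan is to prove the displayed estimate $Z(R(T)) = 1 + T + O(n+1)$ and then to read off the isomorphism statement as a formal consequence. The first observation is that it suffices to control the degree $n$ coefficient. Indeed, $R(T) = \sigma(C_T)$ is obtained from the trivial string link by surgery along a degree $n$ tree clasper, so it is $C_n$-equivalent to $1$, hence lies in $L_n(m)$. By the elementary direction of the Goussarov--Habiro theorem recalled in the introduction, $C_n$-equivalent string links cannot be distinguished by Vassiliev invariants of degree $< n$; since each of the finitely many coefficients of $Z_{\leq n-1}$ is such an invariant, $Z_{\leq n-1}(\sigma(C_T)) = Z_{\leq n-1}(1) = 1$, so $Z(\sigma(C_T)) = 1 + Z_n(\sigma(C_T)) + O(n+1)$. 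Thus everything reduces to the identity $Z_n(\sigma(C_T)) = T$ in $\As^{\FI}_n(m)_{\Q}$.

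To compute this leading coefficient I would induct on $n$. After fixing a parenthesization, $\sigma(C_T)$ decomposes into elementary parenthesized tangles, and its combinatorial Kontsevich integral is assembled from the values of $Z$ on braidings, cups, caps and associators through the functoriality, monoidality and doubling/reversing compatibilities of Theorem \ref{thm:properties_combinatorial_Kontsevich_integral}. For $n = 1$, $T$ is a single chord, $C_T$ is a degree one clasper, and $\sigma(C_T)$ differs from the trivial tangle by one elementary clasp between the two strands carrying the leaves; a direct evaluation of $Z$, using that it sends the positive (resp.\ negative) braiding to $\exp(\pm\tfrac{1}{2}\,c)$ for a single chord $c$, gives $Z(\sigma(C_T)) = 1 + T + O(2)$. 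For $n \geq 2$, the clasper $C_T$ has a node, and a clasper-calculus move of the kind established in the proof of Theorem \ref{prop:realizing_trees} — the clasper form of $\STU$ in (\ref{eq:tree_STU}), or the splitting move (\ref{eq:Ohtsuki_IHX}) — rewrites $\sigma(C_T)$, modulo $C_{n+1}$, as a combination of surgeries along tree claspers whose underlying diagrams are obtained from $T$ by resolving that node. Such rewritings introduce only $C_{n+1}$-trivial corrections, hence only $O(n+1)$ terms after applying $Z$; since the diagram side assembles $T$ out of exactly these smaller pieces by the same $\STU$ pattern, the induction hypothesis yields $Z_n(\sigma(C_T)) = T$.

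For the isomorphism statement: for $[\gamma] \in \Ls_n L(m)$ with representative $\gamma \in L_n(m)$, the reduction argument gives $Z(\gamma) = 1 + Z_n(\gamma) + O(n+1)$, and two $C_{n+1}$-equivalent representatives, being indistinguishable by degree $\leq n$ Vassiliev invariants, have the same $Z_n$; hence $Z^{GH}\colon\Ls L(m)_{\Q} \to \As^{\FI}(m)_{\Q}$, $[\gamma] \mapsto Z_n(\gamma)$, is well defined, and $\Q$-linear by multiplicativity of $Z$. By the estimate just proved, $Z^{GH}(R(T)) = Z_n(\sigma(C_T)) = T$ for every $T \in \Ds^T_n(m)$, so $Z^{GH} \circ R$ is the natural map $\Ls^{\FI}(m)_{\Q} \to \As^{\FI}(m)_{\Q}$ sending a tree diagram to itself, which by Theorems \ref{thm:comparison_primitive_and_size_filtration} and \ref{thm:presentation_primitive_Lie_algebra} is an injection, with image $\Prim(\As^{\FI}(m)_{\Q})$. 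Therefore $R$ is injective; it is surjective by Theorem \ref{prop:realizing_trees} and a morphism of graded Lie algebras by the same theorem, so it is an isomorphism of graded Lie $\Q$-algebras. Since $Z^{GH}\circ R$ is the inclusion $\Ls^{\FI}(m)_{\Q}\cong\Prim(\As^{\FI}(m)_{\Q})\hookrightarrow\As^{\FI}(m)_{\Q}$ and $R$ is invertible, $Z^{GH}$ takes values in $\Prim(\As^{\FI}(m)_{\Q})$ and its corestriction is $R^{-1}$.

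The main obstacle is the identity $Z_n(\sigma(C_T)) = T$ itself. The work lies in organizing the combinatorial Kontsevich integral of $\sigma(C_T)$ — the choice of parenthesization and the bookkeeping of the strand-doublings produced by the surgery — so that the degree $n$ part can be isolated cleanly, and in arranging the induction so that breaking a node of $C_T$ costs only $C_{n+1}$-trivial corrections while the surviving degree $n$ term reproduces the $\STU$-assembly of $T$ with the correct signs; the latter is exactly what the extra positive half-twist built into the realization map $R$ is there to guarantee.
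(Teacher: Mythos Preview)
Your reduction to the single identity $Z_n(\sigma(C_T))=T$ is clean and correct, and your derivation of the isomorphism statement from it is exactly right. The gap is in the induction step. The clasper $\STU$ move (\ref{eq:tree_STU}) does \emph{not} lower the degree: resolving a node of a degree $n$ tree clasper produces two size-$2$ forest claspers still of total degree $n$, whose component trees each have degree strictly less than $n$. Your induction hypothesis is stated only for \emph{tree} diagrams, so you cannot apply it to these forests; and there is no ready-made statement telling you that $Z_n$ of a forest surgery equals the forest diagram. Similarly, the move (\ref{eq:Ohtsuki_IHX}) builds a degree $n$ tree out of smaller pieces but does not by itself express a degree $n$ surgery purely in terms of degree $n-1$ surgeries. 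So the sentence ``breaking a node of $C_T$ costs only $C_{n+1}$-trivial corrections while the surviving degree $n$ term reproduces the $\STU$-assembly of $T$'' hides exactly the step that needs to be done.

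The paper's induction is organised differently and this difference is the content of the proof. One first reduces (by pulling the clasper into a small box) to a normalised situation (\textbf{A}) in which $T$ has one leg per strand and the rightmost two legs share a node. At that node one applies Habiro's moves 2 and 10, which replace the tripod by a genuine degree $n-1$ tree clasper $C'$ living on the strands with the last one \emph{doubled and one copy reversed}, together with some explicit crossings. The induction hypothesis now applies to $C'$, and the computation of $Z$ is carried out not via clasper calculus but via the compatibility of the Kontsevich integral with strand doubling $(\textbf{D}_C)_*$ and reversal $(\textbf{R}_C)_*$ from Theorem \ref{thm:properties_combinatorial_Kontsevich_integral}(iv,v). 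The degree $n$ term is then extracted as a product of the degree $n-1$ output of the induction hypothesis with the degree $1$ contribution of the explicit crossings, and a short diagram chase using $\STU$ recovers $T$. The key idea you are missing is precisely this use of doubling/reversing to trade a node for a strand operation that $Z$ knows how to handle.
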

\begin{proof}
    We proceed by induction on $n$. Moreover, at each step, we first make the simplifying assumption (\textbf{A}): $m=n+1$, $T$ has one leg on each strand and (if $n>1$) the legs attached to the rightmost two strands are adjacent to a common node. Then we show the result for the general case.

    For $n=1$ and assuming (\textbf{A}), there is only one possible diagram $T\in\Ds_1^T(2)$ with a leg on each strand. Its realization is a full positive braiding between the two strands, hence we obtain the desired result by Theorem \ref{thm:properties_combinatorial_Kontsevich_integral}(\ref{it:Z_on_braiding_and_associators}):
    \begin{align*}
        \vcenter{\hbox{\includegraphics[scale=1.2]{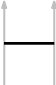}}}\overset{R}{\longmapsto}\sigma\paren{\vcenter{\hbox{\includegraphics[scale=1.2]{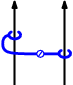}}}}=\vcenter{\hbox{\includegraphics[scale=1.2]{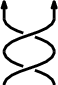}}}\overset{Z}{\longmapsto}&\paren{\vcenter{\hbox{\includegraphics[scale=1.2]{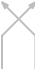}}}+\frac{1}{2}\vcenter{\hbox{\includegraphics[scale=1.2]{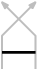}}}+O(2)}\circ\paren{\vcenter{\hbox{\includegraphics[scale=1.2]{Ressources/Kontsevich_integral/base-case-TPT/empty-diagram.eps}}}+\frac{1}{2}\vcenter{\hbox{\includegraphics[scale=1.2]{Ressources/Kontsevich_integral/base-case-TPT/chord-on-x.eps}}}+O(2)}\\
        &=\vcenter{\hbox{\includegraphics[scale=1.2]{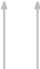}}}+\vcenter{\hbox{\includegraphics[scale=1.2]{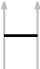}}}+O(2).
    \end{align*}

    Now we remove the assumption (\textbf{A}), hence the single chord of $T$ can have its legs attached anywhere on the $m$ strands. Consider a clasper $C_T$ realizing $T$, as in the construction of the realizing map $R$, see Figure \ref{fig:realizing_a_tree}. By an isotopy, we can pull the clasper out and bring it into a small box, such that the box contains $2$ parallel vertical strands and the clasper $C_T$ is attached to them satisfying (\textbf{A}):
    \begin{equation}\label{eq:TPT_degree_1_pull_out}
        \vcenter{\hbox{\includegraphics[scale=1.2]{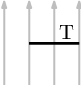}}}\;\rightsquigarrow\;\vcenter{\hbox{\includegraphics[scale=1.2]{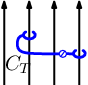}}}\simeq\vcenter{\hbox{\includegraphics[scale=1.2]{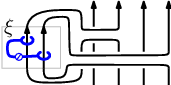}}}.
    \end{equation}
    The grey box consists of a string link $\xi$ with a degree $1$ tree clasper $C_T$ on it, satisfying (\textbf{A}). Thus we compute
    \begin{align*}
        Z&\paren{R\paren{\vcenter{\hbox{\includegraphics[scale=1.2]{Ressources/Kontsevich_integral/pulling-claspers-out/diagram-T.eps}}}}}=Z\paren{\sigma\paren{\vcenter{\hbox{\includegraphics[scale=1.2]{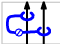}}}}}\circ Z\paren{\vcenter{\hbox{\includegraphics[scale=1.2]{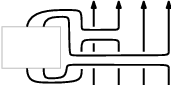}}}}\\
        &\overset{(\textbf{A})}{=}\paren{Z\paren{\vcenter{\hbox{\includegraphics[scale=1.2]{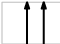}}}}+\vcenter{\hbox{\includegraphics[scale=1.2]{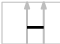}}}+O(2)}\circ Z\paren{\vcenter{\hbox{\includegraphics[scale=1.2]{Ressources/Kontsevich_integral/pulling-claspers-out/tangle-box.eps}}}}\\
        &=Z\paren{\vcenter{\hbox{\includegraphics[scale=1.2]{Ressources/Kontsevich_integral/pulling-claspers-out/box-straight.eps}}}}\circ Z\paren{\vcenter{\hbox{\includegraphics[scale=1.2]{Ressources/Kontsevich_integral/pulling-claspers-out/tangle-box.eps}}}}+\vcenter{\hbox{\includegraphics[scale=1.2]{Ressources/Kontsevich_integral/pulling-claspers-out/box-chord.eps}}}\circ\vcenter{\hbox{\includegraphics[scale=1.2]{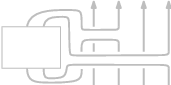}}}+O(2)\\
        &=Z\paren{\vcenter{\hbox{\includegraphics[scale=1.2]{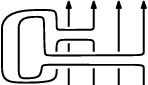}}}}+\vcenter{\hbox{\includegraphics[scale=1.2]{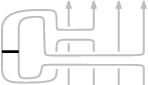}}}+O(2)=1+\vcenter{\hbox{\includegraphics[scale=1.2]{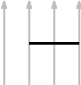}}}+O(2)
    \end{align*}
    where the first equality uses Figure \ref{eq:TPT_degree_1_pull_out} and multiplicativity. Then the left factor can be computed by the above since (\textbf{A}) is satisfied in the small grey box. Then we distribute the product over the sum on the left to obtain the second line, where everything that has degree $\geq 2$ is contained in $O(2)$. Use multiplicativity again to reconstruct the first term as the Kontsevich integral of a string link isotopic to the unlink, which becomes $1$, while the second term gives the diagram $T$ when attached to the outside. This concludes the proof of the case $n=1$.

    Now we show the induction step. Let $n\geq 2$ and assume that the statement holds in degree $n-1$.

    First, consider a degree $n$ tree diagram $T\in\Ds^T_n(n+1)$ on $n+1$ strands and satisfying (\textbf{A}). It looks like the diagram on the left below, where the black dashed rectangle contains the $n-1$ other legs. Realize it as a tree clasper $C_T$, assuming that the additional half-twist is in the blue blob.
    \begin{align*}
        \vcenter{\hbox{\includegraphics[scale=1.2]{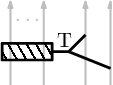}}}\:\rightsquigarrow\;\vcenter{\hbox{\includegraphics[scale=1.2]{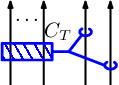}}}.
    \end{align*}
    The core of the induction step is to apply Habiro's move 2 and 10 \cite[pages 14,15]{habiro2000claspers} to the visible tripod:
    \begin{align*}
        \vcenter{\hbox{\includegraphics[scale=1.3]{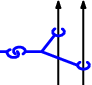}}}\;\overset{H10}{\sim}\;\vcenter{\hbox{\includegraphics[scale=1.3]{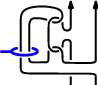}}}\;\simeq\:\vcenter{\hbox{\includegraphics[scale=1.3]{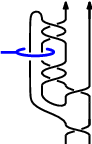}}}.
    \end{align*}
    The right-hand-side is obtained by isotopy. The new tree clasper $C'$ ($C_{T}$ minus the tripod) realizes a degree $n-1$ tree diagram $T'$.

    Thus, to compute $R(T)$ we can first apply moves $2$ and $10$ as above, then perform surgery on $C'$. This surgery is entirely contained in the grey rectangle shown in the first line below, and we obtain that first equality by multiplicativity of $Z$:
    \begin{align*}
        Z&(R(T))=Z\paren{\vcenter{\hbox{\includegraphics[scale=1.2]{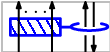}}}}\circ Z\paren{\vcenter{\hbox{\includegraphics[scale=1]{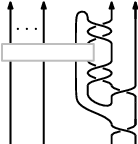}}}}\\
        &=\paren{1+(R_{n+1})_*(D_n)_*\paren{\vcenter{\hbox{\includegraphics[scale=1.2]{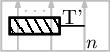}}}}+O(n)}\circ\paren{1+Z_1\paren{\vcenter{\hbox{\includegraphics[scale=1]{Ressources/Kontsevich_integral/induction-TPT/tangle-without-box.eps}}}}+O(2)}\\
        &=\underbrace{\paren{(R_{n+1})_*(D_n)_*\paren{\vcenter{\hbox{\includegraphics[scale=1.2]{Ressources/Kontsevich_integral/induction-TPT/Tprime.eps}}}}}\circ Z_1\paren{\vcenter{\hbox{\includegraphics[scale=1]{Ressources/Kontsevich_integral/induction-TPT/tangle-without-box.eps}}}}}_{(1)}\\ &\quad+\underbrace{\vcenter{\hbox{\includegraphics[scale=1.2]{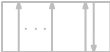}}}\circ Z\paren{\vcenter{\hbox{\includegraphics[scale=1]{Ressources/Kontsevich_integral/induction-TPT/tangle-without-box.eps}}}}}_{(2)}+\underbrace{Z_{\geq 1}\paren{\vcenter{\hbox{\includegraphics[scale=1.2]{Ressources/Kontsevich_integral/induction-TPT/clasper-in-box.eps}}}}\circ\vcenter{\hbox{\includegraphics[scale=1]{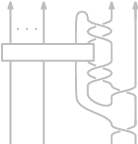}}}}_{(3)}\;+\; O(n+1).
    \end{align*}
    From the first to the second line, the induction hypothesis $Z(R(T'))=1+T'+O(n)$ is used, with additional strand doubling and reversing. Then the third line is obtained by distributing this product\footnote{Observe that the $1$'s are not all the same, they stand for the empty diagram on the adequate tangle skeleton.}, and putting all the terms of degree $\geq n+1$ inside $O(n+1)$. The resulting three terms are called (1), (2) and (3). Let us compute them:
    \begin{enumerate}
        \item The first term is a product of degree $(n-1)$ terms with degree $1$ terms. First, we compute those two factors individually. Since we are only computing the degree $1$ of $Z$, only the braidings matter. The first factor is obtained by doubling the $n$-th strand and reversing the new $(n+1)$-st strand
        \begin{align*}
            (R_{n+1})_*(D_n)_*\paren{\vcenter{\hbox{\includegraphics[scale=1.2]{Ressources/Kontsevich_integral/induction-TPT/Tprime.eps}}}}=(R_{n+1})_* &\paren{\vcenter{\hbox{\includegraphics[scale=1.2]{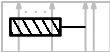}}}+\vcenter{\hbox{\includegraphics[scale=1.2]{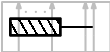}}}}\\
            &=\vcenter{\hbox{\includegraphics[scale=1.2]{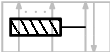}}}-\vcenter{\hbox{\includegraphics[scale=1.2]{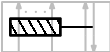}}}
        \end{align*}
        and the second factor is a sum of Kontsevich integrals of braidings
        \begin{align*}
            Z_1\paren{\vcenter{\hbox{\includegraphics[scale=1]{Ressources/Kontsevich_integral/induction-TPT/tangle-without-box.eps}}}}&=\vcenter{\hbox{\includegraphics[scale=1]{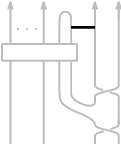}}}-\vcenter{\hbox{\includegraphics[scale=1]{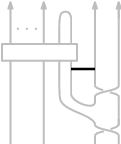}}}+\frac{1}{2}\vcenter{\hbox{\includegraphics[scale=1]{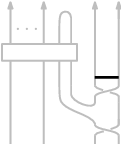}}}-\frac{1}{2}\vcenter{\hbox{\includegraphics[scale=1]{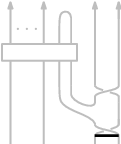}}}.
        \end{align*}
        Combining them yields
        \begin{align*}
            (1)&=\vcenter{\hbox{\includegraphics[scale=1]{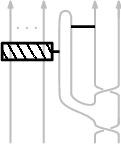}}}-\vcenter{\hbox{\includegraphics[scale=1]{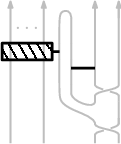}}}+\frac{1}{2}\vcenter{\hbox{\includegraphics[scale=1]{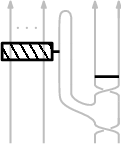}}}-\frac{1}{2}\vcenter{\hbox{\includegraphics[scale=1]{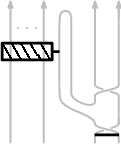}}}\\
            &-\vcenter{\hbox{\includegraphics[scale=1]{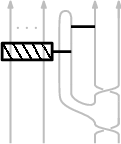}}}+\vcenter{\hbox{\includegraphics[scale=1]{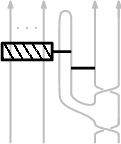}}}-\frac{1}{2}\vcenter{\hbox{\includegraphics[scale=1]{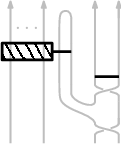}}}+\frac{1}{2}\vcenter{\hbox{\includegraphics[scale=1]{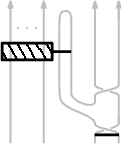}}}\\
            &=\vcenter{\hbox{\includegraphics[scale=1]{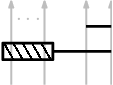}}}-\vcenter{\hbox{\includegraphics[scale=1]{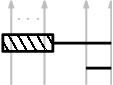}}}\overset{\STU}{=}\vcenter{\hbox{\includegraphics[scale=1]{Ressources/Kontsevich_integral/induction-TPT/tree_T.eps}}}
        \end{align*}
        where the last three columns vanish, since the leg attached to the blob is free to slide across the maximum. In the first column, the additional chord prevents that leg from sliding. This last column gives the tree diagram $T$ by $\STU$.
        \item This part is easier. Indeed, the left factor is $Z$ applied to a trivial string link (but whose rightmost strand is reversed). By mutliplicativity of $Z$, we can combine the two factors into
        \begin{equation*}
            Z\paren{\vcenter{\hbox{\includegraphics{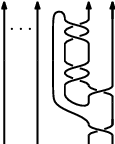}}}}=Z\paren{\vcenter{\hbox{\includegraphics{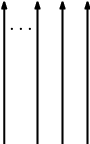}}}}=1.
        \end{equation*}
        \item Since terms (1) and (2) equal $T$ and $1$, respectively, we expect (3) to vanish. Although the first factor might be very complicated, all we know is that it is obtained from a sum of diagrams by doubling the last strand, then reversing the new last strand. This fact alone actually suffices to show that (3) vanishes.
        
        Indeed, suppose that $D$ is any nonempty diagram on $n$ strands, with $k\geq 1$ legs attached to the last strand. Doubling the last strand turns that diagram into a sum of all its $2^k$ lifts. Such a lift is indexed by a word $e_1\dots e_k\in\{0,1\}^{k}$ where a $e_i=0$, resp.\ $e_i=1$, means that the $i$-th leg is attached to the $n$-th, resp.\ $n+1$-st strand. Reversing the last strand has the effect of mutliplying by $\pm 1$, depending on the number of legs attached to the last strand. The resulting sum
        \begin{align*}
            (R_{n+1})_*(D_n)_*\paren{\vcenter{\hbox{\includegraphics[scale=1.2]{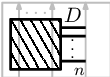}}}}=\sum_{e_2,\dots,e_k\in\{0,1\}}(-1)^{\sum_{j\geq 2}e_j}\paren{\vcenter{\hbox{\includegraphics[scale=1.2]{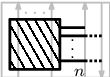}}}-\vcenter{\hbox{\includegraphics[scale=1.2]{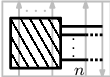}}}}
        \end{align*}
        is a sum of differences of the form "$e_1=0$" minus "$e_1=1$". Multiplying each such difference with the second factor of (3) gives $0$ since the first leg is now free to go from one position to the other:
        \begin{align*}
            \paren{\vcenter{\hbox{\includegraphics[scale=1.2]{Ressources/Kontsevich_integral/induction-TPT/D-lift-left.eps}}}-\vcenter{\hbox{\includegraphics[scale=1.2]{Ressources/Kontsevich_integral/induction-TPT/D-lift-right.eps}}}}\circ\vcenter{\hbox{\includegraphics{Ressources/Kontsevich_integral/induction-TPT/empty-without-box.eps}}}=0.
        \end{align*}

        The above shows that all terms in $Z(R(T'))$ that have at least one leg on the last strand vanish once we apply $(R_{n+1})_{*}(D_n)_*$ and multiply the result with the second factor.
        
        In fact, all the terms in $Z(R(T'))$ must have at least 1 leg on the last strand. Indeed, by definition of the Kontsevich integral, the sum of all terms with no leg on the last strand is equal to the Kontsevich integral of $R(T')$ minus the last strand. If the last strand is removed, then $T'$ gets a trivial leaf and the surgery is trivial. In other words, this comes from the fact that a string link obtained by surgery along a tree clasper with exactly one leg on each strand is a \emph{Brunnian string link} \cite{habiroMeilhan2006Brunnian}.

        This concludes the proof that (3) vanishes.
    \end{enumerate}
    By the above computations of (1,2,3), we obtain
    $$Z(R(T))=1+T+O(n+1)$$
    as desired. This concludes the case (\textbf{A}) in degree $n$.

    The general case is shown exactly as in degree $1$, by pulling the clasper out to extract a box containing the clasper (as in (\ref{eq:TPT_degree_1_pull_out})), in which assumption (\textbf{A}) is satisfied. This concludes the induction step, and the proof of (\ref{eq:tree_preservation_theorem}).

    It remains to show that $Z$ induces an inverse to $R$.

    By (\ref{eq:tree_preservation_theorem}), the degree $n$ part of $Z$ induces a map $Z_n\colon L_n(m)\rightarrow\As^{\FI}_n(m)_{\Q}$, which is constant on $C_{n+1}$-equivalence classes. Thus it factors through
    $$Z^{GH}_n\colon\Ls_n L(m)\rightarrow\As^{\FI}_n(m)_{\Q},$$
    whose image lies in $\Prim(\As^{\FI}(m)_{\Q})_n$. Identifying the latter space with $\Ls_n^{\FI}(m)_{\Q}$ by Theorem \ref{thm:presentation_primitive_Lie_algebra}, direct summing all the degrees at once and tensoring with $\Q$, we finally obtain
    $$Z^{GH}\colon\Ls L(m)_{\Q}\rightarrow\Ls^{\FI}_n(m)_{\Q},$$ which is an inverse to $R$ by (\ref{eq:tree_preservation_theorem}).
\end{proof}

\section{Application to the rational Goussarov--Habiro conjecture}\label{sec:Application_to_rational_GHC}
\subsection{The rational Goussarov--Habiro conjecture}
\subsubsection{Vassiliev filtration and associated graded algebra.}
The monoid ring\footnote{If $M$ is a monoid and $R$ a ring, the \emph{monoid ring} $RM$ is the abelian group of formal sums $\sum_{i=1}^n r_i x_i$, where $r_i\in R$, $x_i\in M$, wih multiplication given by $rx\cdot r'x'=rr' xx'$ and extended linearly.} $\Z L(m)$ of string links is filtered by the \emph{Vassiliev filtration} (\ref{eq:Vassiliev_filtration}), with associated graded algebra $\As L(m)$ (\ref{eq:graded_algebra_associated_with_Vassiliev_filtration}):
\begin{align}
    \Z L(m)&\supset \del_1(\Z S_{1} L(m))\supset\del_2(\Z S_{2} L(m))\supset\dots,\label{eq:Vassiliev_filtration}\\
    \As L(m)&\coloneqq\bigoplus_{n\geq 0}\As_nL(m)\coloneqq\bigoplus_{n\geq 0}\frac{\del_n(\Z S_{n} L(m))}{\del_{n+1}(\Z S_{n+1} L(m))},\label{eq:graded_algebra_associated_with_Vassiliev_filtration}
\end{align}
where $S_{\bullet} L(m)$ denotes the graded monoid of (isotopy classes of) singular\footnote{Singularities are only allowed to be transversal double points, and isotopies must preserve the double points.} string links, graded by the number of double points. The $\Z$-linear maps $\del_n\colon\Z S_n L(m)\rightarrow \Z L(m)$ are defined by applying the following \emph{Skein relation} \cite{birman1993knot} to each double point
\begin{equation}\label{eq:skein_relation}
    \includegraphics[width=25pt,valign=c]{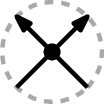}\longmapsto\includegraphics[width=25pt,valign=c]{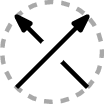}-\includegraphics[width=25pt,valign=c]{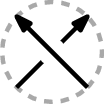},
\end{equation}
where it is understood that the terms are identical outside of the part shown. Thus, for $\gamma\in S_nL(m)$ a singular string link with $n$ double points, its image $\del_n(\gamma)\in\Z L(m)$ is an alternating sum of all possible resolutions of the $n$ double points. 

A string link invariant $V\colon L(m)\rightarrow A$ valued in an abelian group $A$ naturally extends to a $Z$-linear map $V\colon\Z L(m)\rightarrow A$. We say that $V$ is a \emph{Vassiliev invariant} or \emph{finite type invariant} of degree $n$ if it vanishes on $\del_{n+1}(\Z S_{n+1} L(m))$. In particular, a Vassiliev invariant of degree $n$ induces a functional on $\As_nL(m)$. Two string links $\gamma,\gamma'\in L(m)$ are \emph{$V_n$-equivalent} if they cannot be distinguished by Vassiliev invariants of degree $<n$ or, equivalently, if $\gamma-\gamma'$ belongs to $\del_{n}(\Z S_{n} L(m))$.

There is a realization map $R^v\colon\As^{\FI}(m)\epic\As L(m)$ (the $v$ refers to \emph{Vassiliev}), which is a morphism of $\Z$-algebras. Originally, $R^v$ was defined on the presentation of $\As^{\FI}(m)$ by chord diagrams, by sending a chord diagram to a singular string link realizing it \cite{bar1995vassiliev}. However, one can define this realization map on Feynman (forest) diagrams using claspers\footnote{This defines the same realization map since degree $1$ clasper surgeries are crossing changes} \cite[Section 6]{habiro2000claspers}. This was made precise by Conant and Teichner in the proof of \cite[Theorem 1.1]{conantteichner2004gropefeynmandiagrams}. Following their argument, we can construct a surjective\footnote{Surjectivity follows from Section 6 in \cite{habiro2000claspers}, where Habiro defines the Vassiliev filtration using forest claspers instead of singular string links.} morphism of graded $\Z$-algebras
\begin{equation}\label{eq:Vassiliev_realization_map}
    R^v\colon\As^{\FI}(m)\epic\As L(m)
\end{equation}
which in degree $n$ sends a forest diagram $F\in\Ds_n(m)$ to $$[\gamma_0;T_1,\dots,T_s]\coloneqq \sum_{\Ss'\subset\{T_1,\dots,T_s\}}(-1)^{s-\abs{\Ss'}}(\gamma_0^{\Ss'})\in\As_n L(m)$$ where $T_1\cup\dots\cup T_s$ is a forest clasper realizing $F$, $\gamma_0$ denotes the trivial string link on $m$ strands and $\gamma^{\Ss'}_0$ denotes the result of clasper surgery on $\gamma_0$ along the trees contained in $\Ss'$. That $R^v$ is well-defined follows from \cite[Section 6]{habiro2000claspers} and \cite{conantteichner2004gropefeynmandiagrams}.

\begin{remark}
    A forest clasper realizing a forest diagram $F$ is constructed as in the proof of Proposition \ref{prop:realizing_trees}, but with multiple trees at the same time and with an additional positive half-twist near a chosen leaf of each tree. Those half-twits are important in order for $\STU$ relations to be realized with correct signs. This explains the footnote on page 68 in \cite{habiro2000claspers}.
\end{remark}

Over $\Q$, the realization map $R^v\colon\As^{\FI}(m)_{\Q}\xrightarrow{\cong}\As L(m)_{\Q}$ is an isomorphism, with inverse induced by the Kontsevich integral \cite{kontsevich1993vassiliev,bar1995vassiliev}.

\subsubsection{The Goussarov--Habiro conjecture}\label{sec:GH_conjecture}
From Habiro's definition of the Vassiliev filtration using claspers\footnote{And from Goussarov's work as well.}, it follows that $C_n$-equivalence implies $V_n$-equivalence for string links in $L(m)$. Goussarov and Habiro independently conjectured the converse:
\begin{conjecture*}[Goussarov--Habiro conjecture for string links in the cylinder]
    For all $m\geq 1$ and $n\geq 1$, any $V_n$-equivalent string links on $m$ strands are $C_n$-equivalent.
\end{conjecture*}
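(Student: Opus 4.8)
The plan is to upgrade the rational identification of the Goussarov--Habiro Lie algebra (Theorem~\ref{thmI:presentation_rational_Habiro_Lie_algebra}) to the integers. As recalled in the introduction, the conjecture is equivalent to injectivity of the comparison map $\chi\colon\Ls L(m)\to\As L(m)$; since $\chi$ sends a group-like $[\gamma]$ to the primitive class $[\gamma-1]$, it factors as $\chi=(\subseteq)\circ\chi_{\mathrm{pr}}$ with $\chi_{\mathrm{pr}}\colon\Ls L(m)\to\Prim(\As L(m))$, so it suffices to show $\chi_{\mathrm{pr}}$ is injective over $\Z$. The integral realization maps $R\colon\Ls^{\FI}(m)_{\Z}\epic\Ls L(m)$ (Theorem~\ref{prop:realizing_trees}, already defined and surjective over $\Z$ by the remark following it) and $R^v\colon\As^{\FI}(m)\epic\As L(m)$ (see~(\ref{eq:Vassiliev_realization_map}), a morphism of Hopf algebras) fit into a commutative square
\[\begin{tikzcd}
	{\Ls^{\FI}(m)_{\Z}} & {\Prim(\As^{\FI}(m))} & {\As^{\FI}(m)} \\
	{\Ls L(m)} & {\Prim(\As L(m))} & {\As L(m)}
	\arrow["{R}"', two heads, from=1-1, to=2-1]
	\arrow["{R^v_{\mathrm{pr}}}"', from=1-2, to=2-2]
	\arrow["{R^v}", two heads, from=1-3, to=2-3]
	\arrow["{\chi_{\mathrm{pr}}}"', from=2-1, to=2-2]
	\arrow[hook, from=2-2, to=2-3]
	\arrow["{\bar\iota}", from=1-1, to=1-2]
	\arrow[hook, from=1-2, to=1-3]
\end{tikzcd}\]
where $\bar\iota$ sends a tree diagram to itself inside $\As^{\FI}(m)$ — legitimate over $\Z$ since $\1T,\AS,\IHX,\STU^2$ are consequences of $\STU$ — and lands in the primitives because trees are primitive. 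Commutativity $\chi_{\mathrm{pr}}\circ R=R^v_{\mathrm{pr}}\circ\bar\iota$ holds because both composites send a tree diagram $T$ to the class of $\sigma(C_T)-1$, using that the Vassiliev realization of a size-$1$ forest is $\gamma_0^{\{T\}}-\gamma_0$; this is the integral shadow of the rational square displayed after Theorem~\ref{thmI:presentation_rational_Habiro_Lie_algebra}.

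Given this square, the conjecture follows from just two integral statements: \textbf{(A)} $\bar\iota\colon\Ls^{\FI}(m)_{\Z}\to\Prim(\As^{\FI}(m))$ is injective; and \textbf{(B)} $R^v_{\mathrm{pr}}\colon\Prim(\As^{\FI}(m))\to\Prim(\As L(m))$ is injective. Indeed, if both hold then $R^v_{\mathrm{pr}}\circ\bar\iota$ is injective, so for $x\in\ker\chi_{\mathrm{pr}}$ any $\tilde x\in\Ls^{\FI}(m)_{\Z}$ with $R(\tilde x)=x$ satisfies $R^v_{\mathrm{pr}}(\bar\iota(\tilde x))=\chi_{\mathrm{pr}}(x)=0$, forcing $\tilde x=0$ and hence $x=R(\tilde x)=0$. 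Note that this uses only surjectivity of $R$, which is available over $\Z$; it does \emph{not} require an integral inverse to $R$.

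The difficulty is that \textbf{(A)} and \textbf{(B)} are exactly the two places where the present proof of the rational statements is irremediably rational. Statement (B) is the Vassiliev-filtration analogue of the conjecture; over $\Q$ it is Kontsevich's theorem, and it has no known integral proof because it rests on the Kontsevich integral, a universal Vassiliev invariant that exists only over $\Q$. Statement (A) is the integral form of Theorem~\ref{thmI:primitive_Lie_algebra_of_trees}: the proof factors $\bar\iota$ through the tower~(\ref{eq:chain_of_inclusion_modules_of_forests}) and splits each stage by the averaging sections $s^{k+1}\colon F\mapsto\tfrac{1}{(k+1)!}\sum_{\sigma\in\Sy_{k+1}}\bigl(F-(F)_{\sigma}\bigr)$, which are visibly not integral; moreover, over $\Z$ the first homology of the \emph{unlabelled} graphs of forests $\Fs(T_1,\dots,T_s)$ contributes relations beyond $\1T,\AS,\IHX,\STU^2$ (the ``new relations'' flagged after Lemma~\ref{lem:path_lifting_in_graph_of_forests}), so even the combinatorial input changes. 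I expect (A) — the diagrammatic heart — to be the subtler of the two once an integral universal invariant is in hand, since it is the only step where the combinatorics of graphs of forests genuinely differs from the rational picture; (B) would then follow formally, exactly as the rational (B) follows from Kontsevich's theorem via the tree preservation theorem (Theorem~\ref{thm:tree_preservation_theorem}).

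The concrete route this suggests is the Goodwillie--Weiss embedding calculus tower for string links: a string-link version of Boavida de Brito--Weiss's universality result would, combined with the argument above run with $\Z_{(p)}$-coefficients (where the averaging in (A) is harmless in degree $n<p$, since then every $k!$ with $k\le n$ is a unit), yield the Goussarov--Habiro conjecture over $\Z_{(p)}$ in degree $n<p$; assembling these over all primes $p$ would give the full integral statement. Thus the main obstacle is not algebraic but the construction, over $\Z$, of a sufficiently universal Vassiliev-type invariant of string links — the open problem this paper's framework is designed to feed into.
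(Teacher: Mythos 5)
The statement you are addressing is the Goussarov--Habiro conjecture itself, which the paper explicitly presents as \emph{open}: it has no proof in the paper, which only establishes the rational version (injectivity of $\chi$ after tensoring with $\Q$, recovering Massuyeau's theorem). Your text is accordingly not a proof but a conditional reduction, and you say so honestly. The reduction itself is logically sound: the square you draw does commute (both composites send a tree $T$ to $[\sigma(C_T)-1]$), $\chi$ does land in primitives since $\Delta(\gamma-1)-(\gamma-1)\otimes 1-1\otimes(\gamma-1)=(\gamma-1)\otimes(\gamma-1)$ lies in doubled filtration and hence dies in the associated graded, and the diagram chase from (A)$+$(B) to injectivity of $\chi_{\mathrm{pr}}$ using only surjectivity of $R$ over $\Z$ is correct. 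This is consistent with the paper's own ``Further directions'' discussion, including the $\Z_{(p)}$ endgame via embedding calculus.

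The genuine gap is that both of your hypotheses are open, and (A) is moreover probably not the right statement. The paper itself flags (after Lemma \ref{lem:path_lifting_in_graph_of_forests}) that over $\Z$ the first homology of the \emph{unlabelled} graphs of forests contributes relations beyond $\AS,\square R,\hexagon R$; these would lie in the kernel of $\bar\iota\colon\Q\Ds^T(m)/\langle\1T,\AS,\IHX,\STU^2\rangle\to\Prim(\As^{\FI}(m))$ taken over $\Z$, so the correct integral source of $\bar\iota$ has not yet been identified and injectivity as you state it may simply fail. (If it fails, your diagram chase breaks: a nonzero $\tilde x\in\ker\bar\iota$ gives no information about $R(\tilde x)$.) The robust formulation would be $\ker(R)=\ker(\bar\iota)$, i.e.\ that the realization map factors through an isomorphism onto its image in $\Prim(\As^{\FI}(m))$, which is exactly the integral question the paper defers to future work. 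Hypothesis (B) is, as you say, equivalent to the existence of a universal Vassiliev invariant over $\Z$ restricted to primitives, which is wide open. So your proposal correctly maps the territory but does not, and cannot with the paper's tools, prove the conjecture; it should not be presented as a proof environment.
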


Since $C_n$-equivalence implies $V_n$-equivalence, any $C_n$-trivial string link $\gamma\in L_n(m)$ is automatically $V_n$-equivalent to the trivial string link $1$, hence $\gamma-1$ belongs to $\del_n(\Z S_nL(m))$. This yields the \emph{comparison map} \cite[Section 8.2]{habiro2000claspers}
\begin{equation*}
    \chi\colon\Ls L(m)\rightarrow\As L(m)\colon [\gamma]\mapsto [\gamma-1],
\end{equation*}
which is injective if and only if the Goussarov--Habiro conjecture is true. In particular, it is injective when $m=1$ by the Goussarov--Habiro theorem for knots \cite[Theorem 6.18]{habiro2000claspers}.

Exploiting the \emph{dimension subgroup property} in characteristic zero, Massuyeau showed the following rational version of the Goussarov--Habiro conjecture:
\begin{theorem}[Massuyeau \cite{massuyeau2006finitetype}]\label{thm:Massuyeau}
    Over $\Q$, the map $\chi\colon\Ls L(m)_{\Q}\rightarrow\As L(m)_{\Q}$  is injective.
\end{theorem}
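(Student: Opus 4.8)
The plan is to deduce injectivity of $\chi$ from the commutative square relating it, via the isomorphisms $R$ and $R^v$, to the inclusion $\iota\colon\Ls^{\FI}(m)_{\Q}\hookrightarrow\As^{\FI}(m)_{\Q}$ of the primitive Lie algebra of trees into the Hopf algebra of Feynman diagrams. Concretely, I claim that the diagram
\[\begin{tikzcd}
	{\Ls^{\FI}(m)_{\Q}} & {\As^{\FI}(m)_{\Q}} \\
	{\Ls L(m)_{\Q}} & {\As L(m)_{\Q}}
	\arrow["\iota", hook, from=1-1, to=1-2]
	\arrow["R"', "\cong"{description}, from=1-1, to=2-1]
	\arrow["{R^v}", "\cong"{description}, from=1-2, to=2-2]
	\arrow["\chi"', from=2-1, to=2-2]
\end{tikzcd}\]
commutes. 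Granting this, $\chi\circ R=R^v\circ\iota$ is injective, being the composite of the injection $\iota$ (which by Theorem \ref{thm:presentation_primitive_Lie_algebra} is the inclusion $\Prim(\As^{\FI}(m)_{\Q})\hookrightarrow\As^{\FI}(m)_{\Q}$) with the isomorphism $R^v$ induced by the Kontsevich integral; and since $R$ is bijective by Theorem \ref{prop:realizing_trees} together with the tree preservation Theorem \ref{thm:tree_preservation_theorem}, the map $\chi=(R^v\circ\iota)\circ R^{-1}$ is injective. As all the maps in sight are graded, this may equivalently be read degree by degree: $\chi$ restricts to an injection $\Ls_n L(m)_{\Q}\monic\As_n L(m)_{\Q}$ for every $n\geq 1$.

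It remains to check commutativity of the square, which I would verify on the generating tree diagrams. Fix $T\in\Ds^T_n(m)$. On one hand, $R(T)$ is the class of $\sigma(C_T)$ in $\Ls_n L(m)$, where $C_T$ is the tree clasper realizing $T$ constructed in the proof of Theorem \ref{prop:realizing_trees} (with its single additional positive half-twist near a chosen leaf), so that $\chi(R(T))=[\sigma(C_T)-1]\in\As_n L(m)_{\Q}$. On the other hand, $\iota(T)$ is $T$ regarded as a size-$1$ forest diagram, and by the very definition of the Vassiliev realization map (\ref{eq:Vassiliev_realization_map}) together with the Remark following it (the clasper used there carries the same additional positive half-twist),
\begin{equation*}
    R^v(\iota(T))=[\gamma_0;T]=\sum_{\Ss'\subset\{T\}}(-1)^{1-\abs{\Ss'}}\gamma_0^{\Ss'}=\gamma_0^{C_T}-\gamma_0=\sigma(C_T)-1,
\end{equation*}
whose class in $\As_n L(m)_{\Q}$ is exactly $\chi(R(T))$. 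Since both realization maps are defined for all tree diagrams simultaneously and are $\Z$-linear, the square commutes.

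The main obstacle is precisely this last verification: one must match the two clasper-theoretic constructions — the realization $R$ of tree diagrams as string links and the realization $R^v$ of forest diagrams in $\As L(m)$ — so that the square commutes \emph{on the nose}, with no spurious sign. This is where the single additional positive half-twist (discussed in Theorem \ref{prop:realizing_trees} and in the Remark after (\ref{eq:Vassiliev_realization_map}), itself tied to the footnote on page 68 of \cite{habiro2000claspers}) earns its keep: it is present in both constructions, so the alternating sum defining $[\gamma_0;T]$ for a single tree collapses to $\sigma(C_T)-1$ with the expected sign. Once the conventions (planar placement of the tree to the left of the strands, ordering of the leaves, and that one half-twist) are pinned down consistently across both constructions, the identity $R^v\circ\iota=\chi\circ R$ on tree diagrams is immediate, and the rest of the argument is the formal diagram chase above.
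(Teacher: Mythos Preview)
Your proposal is correct and follows essentially the same argument as the paper's alternative proof given immediately after the statement of Theorem~\ref{thm:Massuyeau}: set up the commutative square relating $\chi$ to the injection $\iota$ via the isomorphisms $R$ and $R^v$, and conclude $\chi=R^v\circ\iota\circ R^{-1}$ is injective. Your verification of commutativity on tree diagrams (computing $R^v(\iota(T))=[\gamma_0;T]=\sigma(C_T)-1=\chi(R(T))$, with attention to the matching half-twist conventions) is in fact more explicit than the paper, which simply asserts the square~(\ref{eq:integral_diagram}) commutes.
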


The realization maps $R,R^v$ and the comparison map $\chi$ combine into the following commutative square of graded (Lie or Hopf) $\Z$-algebras:
\begin{equation}\label{eq:integral_diagram}
    \begin{tikzcd}
	{\Ls^{\FI}(m)} & {\Prim(\As^{\FI}(m))} & {\As^{\FI}(m)} \\
	{\Ls L(m)} && {\As L(m)}
	\arrow["\chi", from=2-1, to=2-3]
	\arrow["{R}"', two heads, from=1-1, to=2-1]
	\arrow["{R^v}",two heads, from=1-3, to=2-3]
	\arrow[from=1-1, to=1-2]
	\arrow[hook, from=1-2, to=1-3]
\end{tikzcd}
\end{equation}
When tensoring everything with $\Q$, we obtain:
\[\begin{tikzcd}
	{\Ls^{\FI}(m)_{\Q}} & {\Prim(\As^{\FI}(m)_{\Q})} & {\As^{\FI}(m)_{\Q}} \\
	{\Ls L(m)_{\Q}} && {\As L(m)_{\Q}}
	\arrow["\chi", hook, from=2-1, to=2-3]
	\arrow["{R}"',"\cong", two heads, from=1-1, to=2-1]
	\arrow["{R^v}"',"\cong", bend right = 25,two heads, from=1-3, to=2-3]
	\arrow["\cong"',"\iota", from=1-1, to=1-2]
	\arrow[hook, from=1-2, to=1-3]
	\arrow["{Z^{GH}}"', from=2-1, to=1-2]
	\arrow["{Z^{v}}"', bend right=25, from=2-3, to=1-3]
\end{tikzcd}\]
where $Z^v$ is the inverse to $R^v$ by Kontsevich's theorem, $\iota$ is an isomorphism by Theorem \ref{thm:presentation_primitive_Lie_algebra}, the left triangle commutes and $R$ becomes an isomorphism by Theorem \ref{thm:tree_preservation_theorem}. Therefore, $\chi$ is injective, which gives an alternative proof the rational Goussarov--Habiro conjecture.

\printbibliography[title={References}]
\end{document}